\DeclareMathOperator{\wt}{wt}
\DeclareMathOperator{\tr}{tr}
\newtheorem{thm}{Theorem}[section]
\newtheorem{lem}[thm]{Lemma}
\newtheorem{prop}[thm]{Proposition}
\newtheorem{ex}[thm]{Example}
\newtheorem{conj}[thm]{Conjecture}
\newtheorem{rem}[thm]{Remark}
\newtheorem{dfn}[thm]{Definition}
\newcommand{\Q}{\mathbb{Q}}
\newcommand{\C}{\mathbb{C}}
\newcommand{\Z}{\mathbb{Z}}
\newcommand{\al}{\alpha}
\newcommand{\be}{\beta}
\newcommand{\la}{\langle}
\newcommand{\ra}{\rangle}
\newcommand{\La}{\Lambda}
\newcommand{\s}{\mathfrak{sl}}
\newcommand{\cmmnt}[1]{}
\newcommand{\fwt}[1]{\overline{\Lambda}_{#1}}
\newcommand{\bm}[1]{\mathbf{#1}}
\newcommand{\ju}{\;\;\;}
\newcommand{\bd}[1]{\mathbf{#1}}
\newcommand{\dlm}{\text{dlm}}
\newcommand{\iso}{\stackrel{\sim}{\smash{\longrightarrow}\rule{0pt}{0.4ex}}}
\newcommand{\f}[2]{\frac{#1}{#2}}
\newcommand{\res}[2]{\text{Res}_{#1}{#2}}
\newcommand{\p}[1]{\left(#1\right)}
\newcommand{\te}[1]{\text{#1}}
\newcommand{\h}{\mathfrak h}
\def \<{\langle}
\def \>{\rangle}
\def \a{\alpha }
\title{Spectral flow, twisted modules and MLDE of quasi-lisse vertex algebras}
\author{Bohan Li, Hao Li, Wenbin Yan}
\address{Yau Mathematics Sciences Center, Tsinghua University,  Beijing, 100084,China}
\email{libh19@mails.tsinghua.edu.cn}
\email{haoli2021@mail.tsinghua.edu.cn}
\email{wbyan@mail.tsinghua.edu.cn}
\begin{document}
\begin{sloppypar}
\maketitle

\begin{abstract}
We calculate the fusion rules among $\Z_2$-twisted modules $L_{\s_2}(\ell,0)$ at admissible levels. We derive a series MLDEs for normalized characters of ordinary twisted modules of quasi-lisse vertex algebras. Examples include affine VOAs of type $A_1^{(1)}$ at boundary admissible level,  admissible level $k=-1/2$, $A^{(1)}_{2}$ at boundary admissible level $k=-3/2$, and $\mathrm{BP}^{k}$-algebra with special value $k=-9/4$. We also derive characters of some non-vacuum modules  for affine VOA of type $D_4$ at non-admissible level $-2$ from spectral flow automorphism.
\end{abstract}

\tableofcontents

\newpage

\section{Introduction}
Four-dimensional $\mathcal{N}=2$ superconformal field theories (SCFTs) in physics have rich mathematical structures. In \cite{Beem:2013sza}, the authors propose a correspondence between the Schur sectors of $4d$ $\mathcal{N}=2$ SCFTs and $2d$ vertex operator algebras (VOAs). This correspondence has fueled a lot of work in the past  years, including some conjectures about the chiral algebra in the context of theories of Class $\mathcal{S}$ \cite{Gaiotto:2009we,Gaiotto:2009hg,Beem:2014rza,Lemos:2014lua}. For the genus zero case, the conjecture has been proved in terms of a functorial construction \cite{Arakawa:2018egx}. Class $\mathcal{S}$ theories have the Coulomb branch operators with integral scaling dimension, yet there is another class of $\mathcal{N}=2$ SCFTs called Argyres-Douglas (AD) theories \cite{Argyres:1995jj,Xie:2012hs} which usually have fractional scaling dimensions for the Coulomb branch operators. These AD theories can be constructed by compactifying $6d$ $(2,0)$  theory on a Riemann surface with irregular singularities. The corresponding VOAs of a class of AD theories are identified with certain affine Kac-Moody algebras $L_{k}(\mathfrak{g})$ at admissible level $k$, or affine $\mathcal{W}_{k}(\mathfrak{g},f)$-algebras \cite{Cordova:2015nma,Buican:2015ina,Buican:2015tda,Song:2017oew,Xie:2016evu,Xie:2019yds}. Dualities of $4d$ theories imply nontrivial isomorphism and collapsing levels of VOAs \cite{Xie:2019yds, Xie:2019vzr,Li:2022njl}, some of which were proved rigorously \cite{Arakawa:2021ogm,adamovic2021certain}.

One consequence of this SCFT/VOA correspondence is that the Schur index of the 4d SCFT is equal to the normalized vacuum character of the corresponding VOA, hence the character formula provides a valuable tool to study the spectrum of 4d SCFTs. The character here means that the trace $\chi_{\lambda}(\tau,z)=\tr_{L(\ell,\lambda)}e^{2\pi i\tau(L(0)-\f12z h(0)-\f{1}{24}c_{\ell})}$ over the $L(\ell,\lambda)$. In \cite{kac1988modular}, the authors derived  character formulas for admissible representations of an affine Kac-Moody Lie algebra $\hat{\mathfrak{g}}$ at a rational level $\ell$, i.e., $L(\ell,\lambda)$,
and also investigated the modular  property of these characters. In particular, the transformed character $\chi_{\lambda}(-\f{1}{\tau},\f{z}{\tau})$ can be written as a linear combination of characters of admissible representations with a shifted conformal vector, while $\chi_{\lambda}(-\frac{1}{\tau},z)$ is a linear combination of the characters of some $\Z_2$-twisted modules. The character formulas were used to derive the Schur index of a large class of AD theory \cite{Xie:2019zlb}, while the modular properties also have applications in physics \cite{Razamat:2012uv}.

Another conjecture of the SCFT/VOA correspondence is the identification between the Higgs branch of vacua of an $\mathcal{N}=2$ SCFT and the associated variety of the corresponding VOA \cite{Beem:2017ooy, Song:2017oew}, which were also used to propose lisse VOAs from $4d$ SCFTs \cite{Xie:2019vzr}.  The VOA corresponding to a 4d SCFT is often of the quasi-lisse type whose associated variety has finitely many symplectic leaves. The normalized character of an ordinary representation of a quasi-lisse VOA was shown to satisfy a modular linear differential equation (MLDE), and solving MLDE gives explicit expression for the characters of the affine Lie algebra of the Deligne-Cvitanovic (DC) series \cite{Arakawa:2016hkg}.  The MLDE for VOAs corresponding to several families of AD theories and $\mathcal{N}=4$ super Yang-Mills with $\mathfrak{su}(n)$ gauge group were also discussed in \cite{Beem:2017ooy}. Recently in \cite{Zheng:2022zkm} the authors constructed  flavored MLDEs for the Schur index of $\mathfrak{a}_1$ Class $\mathcal{S}$ theories based on a compact formula for the index they found earlier \cite{Pan:2021mrw}. Both works used the Higgs branch structure to probe the singular vector of the corresponding VOA and then derive the MLDE.

The SCFT/VOA correspondence also goes beyond the Schur index and the vacuum module.  One generalization is to consider the lens space index \cite{Razamat:2013jxa} instead of the normal index. In \cite{Fluder:2017oxm}, the lens space index was identified with the characters of the twisted modules. Given an automorphism $g$ of a VOA $V$ of finite order, the basic properties of  $g$-twisted modules for $V$ are systematically studied by Haisheng Li in \cite{li1996local} using the twisted local systems. In particular, he showed that $\Z_2$-twisted modules of the affine VOA mentioned above can be obtained from its untwisted modules via Li's $\Delta$-operators. Later on, in the important work \cite{dong2000modular}, authors showed that the trace functions of the $g$-twisted modules for the $C_2$-cofinite rational VOA satisfy certain twisted modular linear differential equations (MLDE) and possess some modular invariance properties. In \cite{Hao2022}, the author generalized some of the results obtained by Dong, Li, and Mason to the quasi-lisse vertex operator (super)algebras case, as while as proved that characters of twisted modules of quasi-lisse vertex algebras satisfy certain twisted MLDEs.  Another generalization is to consider the index in the presence of defects, which were identified with the twisted modules using spectral flow \cite{Cordova:2015nma,Cordova:2016uwk,Cordova:2017mhb}.   The spectral flow used in these works has a long history in the conformal field theory (see e.g. \cite{Schwimmer:1986mf,Lesage:2002ch,Ridout:2008nh,Ridout:2010qx,Creutzig:2012sd,Creutzig:2013yca,Kawasetsu:2021qls}).

As reviewed above, there is an increasing interests in understanding twisted modules and spectral flowed modules of VOAs from the perspective of SCFT/VOA correspondence, as it provides the knowledge of lens space indices and  defects of the corresponding SCFTs. Especially if one can show that characters of these modules are solutions of certain MLDEs, closed form expression might also within the grasp as in the ordinary module case. Since conventional methods in physics usually give only power series, such closed form expressions are valuable and may review more interesting properties.
%{\bf \color{red} try add more motivation of studying twisted modules and spectral flow from math}
 Mathematically, people introduce the twisted modules of $V$ to study the module category of $G$ invariants $V^{G}$, where $G$ is finite automorphism group of $V$. In the present work we study the $\Z_2$-twisted modules denoted by $\sigma^{-\f12}(L(\ell,\lambda))$ for affine VOA $L_{\mathfrak{sl}_{2}}(\ell,0)$ associated with Lie algebra $\mathfrak{sl}_{2}$ at admissible level and the fusion rules among them  by using the twisted Zhu's bimodule recently introduced by \cite{zhu2022bimodues} and a conjectural twisted version of Frenkel-Zhu's bimodule theorem. One of our main results is the fusion rules among admissible modules and twisted modules. Firstly, we showed:
\begin{thm}
Let $\ell=-2+\f{p}{q}$ be the admissible level where $p$ and $q$ coprime positive integers with $p\geq 2$. Then $\Z_2$-twisted Zhu's algebra for $L_{\ell}(\s_2)$ is
$\C[x]/\la\prod_{r=0}^{p-2}\prod_{s=0}^{q-1}(x+\f12\ell-r+st)\ra$,
where $0\leq r\leq p-2$, $0\leq s\leq q-1$. In particular, the Dynkin labels (eigenvalues of  $h_{(0)} $ on the highest weight vector) of all $\Z_2$-twisted modules in category $\mathcal{O}$ are $\{r-st-\f12\ell|0\leq r\leq p-2,0\leq s\leq q-1\}$.
\end{thm}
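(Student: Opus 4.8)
The plan is to compute the $\Z_2$-twisted Zhu algebra directly from its definition as a quotient, reducing everything to the image of a single singular vector in a one-variable polynomial ring. \textbf{Setup and reduction.} Let $g$ be the order-two automorphism of $V^\ell(\s_2)$ determined by $e\mapsto -e,\ f\mapsto -f,\ h\mapsto h$; it realizes the $\Z_2$-twist attached to the half-unit spectral flow $\sigma^{-\f12}$, and its fixed Lie subalgebra is $\s_2^g=\C h$. In a $g$-twisted module $e$ and $f$ carry half-integral modes while $h$ keeps integral modes, so the zero mode $h_{(0)}$ is a well-defined operator on the twisted lowest-weight space. I would first record that, because $\s_2^g=\C h$ is abelian and the odd ($V^1$-) part lies in $O_g(V)$, the twisted Zhu algebra of the universal affine VOA is the \emph{commutative} polynomial ring
\[
A_g\p{V^\ell(\s_2)}\cong\C[x],\qquad x=[\,h_{(-1)}\mathbf 1\,],
\]
the analogue of $A\p{V^\ell(\s_2)}\cong U(\s_2)$; here $x$ acts as $h_{(0)}$ on twisted top spaces, up to the constant produced by the $(1+z)^{\wt-1+\f12}$ shift in the twisted $*$-product. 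Since the maximal submodule of $V^\ell(\s_2)$ at admissible level is generated by a single Malikov--Feigin--Fuks singular vector $v_{p,q}$, functoriality of $A_g$ gives $A_g\p{L_\ell(\s_2)}=\C[x]/\la P(x)\ra$ with $P(x)=[\,v_{p,q}\,]$. Everything is thus reduced to identifying $P$.

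\textbf{Roots of $P$.} By the twisted analogue of Zhu's correspondence \cite{dong2000modular}, the one-dimensional $A_g\p{L_\ell(\s_2)}$-modules are exactly the twisted lowest-weight spaces of the irreducible $g$-twisted modules in category $\mathcal O$, and the value of $x$ on such a module is a root of $P$. I would produce these modules as $\sigma^{-\f12}\p{L(\ell,\lambda)}$ for $\lambda$ ranging over the Kac--Wakimoto admissible weights, whose finite Dynkin labels are $r-st$ with $t=\ell+2=p/q$, $0\le r\le p-2$, $0\le s\le q-1$. A short check using $\sigma^{-\f12}\p{h_{(0)}}=h_{(0)}-\f12\ell$, $\sigma^{-\f12}\p{e_{(1/2)}}=e_{(0)}$ and $\sigma^{-\f12}\p{f_{(1/2)}}=f_{(1)}$ shows that the admissible highest-weight vector remains a twisted lowest-weight vector and that its $h_{(0)}$-eigenvalue becomes $r-st-\f12\ell$. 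Hence each $r-st-\f12\ell$ is a root of $P$, and a one-line number-theoretic argument (if $r_1-s_1t=r_2-s_2t$ then $\gcd(p,q)=1$ forces $s_1=s_2$, $r_1=r_2$) shows these $(p-1)q$ roots are pairwise distinct, so $\deg P\ge (p-1)q$.

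\textbf{Degree and conclusion.} The remaining point is the matching upper bound $\deg P\le (p-1)q$, i.e. that $v_{p,q}$ contributes no further twisted modules and that the roots are simple. I would extract this from the leading $x$-symbol of the image $[\,v_{p,q}\,]$: projecting the PBW expansion of $v_{p,q}$ through the twisted structure maps, each monomial reduces to a polynomial in $x$, and the top-degree contribution is controlled by the conformal weight and $h$-content of $v_{p,q}$, giving degree exactly $(p-1)q$. Combined with the lower bound this forces
\[
P(x)=c\prod_{r=0}^{p-2}\prod_{s=0}^{q-1}\p{x-(r-st-\tfrac12\ell)}=c\prod_{r,s}\p{x+\tfrac12\ell-r+st},
\]
which is the claimed algebra; the statement about Dynkin labels is then immediate, and no twisted module outside the list survives.

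\textbf{Main obstacle.} The hard part is the degree bound of the last paragraph: the explicit reduction of the admissible-level singular vector $v_{p,q}$ through the \emph{half-integrally moded} twisted $*$- and $\circ$-products. The usual untwisted PBW/normal-ordering shortcuts do not apply verbatim because $e,f$ now have modes in $\Z+\f12$, so one must track the twisted products carefully to confirm that $[\,v_{p,q}\,]$ has degree exactly $(p-1)q$ with simple roots; equivalently, this is the assertion that $A_g\p{L_\ell(\s_2)}$ is reduced, which I expect to follow from transporting the semisimplicity of the admissible category along the spectral-flow equivalence \cite{li1996local}.
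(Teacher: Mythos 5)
Your lower-bound step is essentially fine: the spectral-flow images $\sigma^{-\f12}(L(\ell,\lambda))$ of the admissible modules are $\hat\sigma$-twisted modules by Li's theorem, their top vectors survive as twisted lowest-weight vectors, and the $(p-1)q$ pairwise-distinct eigenvalues $r-st-\f12\ell$ force the defining ideal $I\subseteq\C[x]$ to be contained in $\la\prod_{r,s}(x+\f12\ell-r+st)\ra$. The genuine gap is the reverse containment, and both of your proposed ways of getting it fail. First, your identification $P(x)=[\,v_{p,q}\,]$ is not correct: the ideal is generated by the image of the whole orbit $[\,U(\s_2)v_{p,q}\,]$ (the paper's $U(\mathfrak{g}^0)/\la[U(\mathfrak{g})v_{\mathrm{sing}}]\ra$), not by the class of the singular vector alone. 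This matters concretely: the Malikov--Feigin--Fuks singular vector has finite $\s_2$-weight $2(p-1)\neq 0$, and since the projection to $A_{\hat\sigma}(V^\ell(\s_2))\cong U(\C h)=\C[x]$ respects the adjoint $h_{(0)}$-grading, the class of any vector of nonzero finite weight is \emph{zero} in $\C[x]$; one must instead reduce a weight-zero element of the submodule (e.g. $f_{(0)}^{p-1}v_{p,q}$), which is exactly what the paper's $k=-\f43$ example does with the weight-zero vector (\ref{sing}). So "the leading $x$-symbol of $[v_{p,q}]$" is not even the right object to analyze. Second, your fallback — deducing the degree bound from semisimplicity transported along spectral flow — is circular: it presupposes that the $(p-1)q$ flowed modules exhaust the irreducible twisted modules in category $\mathcal{O}$, which is precisely the classification the theorem is meant to deliver (in the paper this exhaustion is a \emph{consequence} of the Zhu-algebra computation, via Proposition \ref{admissible} and the theorem in Section \ref{section5}, not an input). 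Semisimplicity by itself also bounds nothing: $\C[x]$ is reduced and has infinitely many one-dimensional quotients.

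For contrast, the paper never touches the singular vector in the twisted setting. It proves (Proposition \ref{isozhu}) that Li's operator $\Delta_{-\f12}(1)$ intertwines the Dong--Li--Mason products $\circ_{\dlm},*_{\dlm}$ — taken with respect to the shifted, $\Q$-graded conformal vector $\omega_z=\omega+\f12 z\,h_{(-2)}\mathbf{1}$ — with the twisted products $\circ_{\hat\sigma,\hat\sigma},*_{\hat\sigma,\hat\sigma}$, hence induces an algebra isomorphism $A^{\dlm}(V)\iso A_{\hat\sigma}(V)$. The known untwisted computation $A^{\dlm}(L(\ell,0))\cong\C[x]/\la\prod_{r=0}^{p-2}\prod_{s=0}^{q-1}(x-r+st)\ra$ of \cite{dong1997vertex} then transfers verbatim, the only calculation being $\Delta_{-\f12}(1)h_{(-1)}\mathbf{1}=(h_{(-1)}+\f12\ell)\mathbf{1}$, which produces the shift $x\mapsto x+\f12\ell$. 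In other words, the "careful tracking of half-integral modes" you flag as the main obstacle is packaged once and for all into that single intertwining identity; your outline would only become a proof if you either carried out the weight-zero singular-vector reduction through the twisted products by hand, or proved an isomorphism of this kind — at which point you would have reproduced the paper's argument.
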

 \begin{thm}
	All  irreducible $\mathbb{Z}_{2}$-twisted modules of $L_{k}(\mathfrak{sl}_2)$ at admissible level in category $\mathcal{O}$ can be obtained by using $\ell=-\frac{1}{2}$ spectral flow on the untwisted modules in category $\mathcal{O}$. In, particular, all of those irreducible twisted modules are ordinary modules at boundary admissible level.

	%%{\color{red}Formulate this as a theorem: find all irreducible twisted modules from category $\mathcal{O}$ by using half-integer spectral flow of admissibie modules. All of  those irreducible twisted modules are ordinary modules. }%%
\end{thm}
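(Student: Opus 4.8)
The plan is to realize the half-integer spectral flow $\sigma^{-1/2}$ concretely as Li's $\Delta$-operator attached to the element of the Cartan whose exponential $e^{2\pi i h_{(0)}}$ is the order-two inner automorphism defining the $\Z_2$-twist, and then to track its effect on highest weights. Following \cite{li1996local}, for an untwisted $L_\ell(\s_2)$-module $(M,Y_M)$ the pair $(M, Y_M(\Delta(h,z)\,\cdot\,,z))$ is a $\Z_2$-twisted module; on modes $\sigma^{-1/2}$ shifts $e$ and $f$ into $\tfrac12+\Z$ modes, keeps $h$ in integer modes, translates $h_{(0)}\mapsto h_{(0)}-\tfrac12\ell$, and fixes the level $\ell$. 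First I would record the two invariants that pin down an object of the twisted category $\mathcal O$: the $h_{(0)}$-eigenvalue (Dynkin label) and the lowest $L(0)$-weight of the flowed module.

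Next I would carry out the weight bookkeeping. Since $\sigma^{-1/2}$ translates $h_{(0)}$ by the constant $-\tfrac12\ell$ uniformly, the cyclic vector of $L(\ell,\lambda)$ with Dynkin label $\mu=\langle\lambda,\alpha^\vee\rangle$ is sent to a vector of $h_{(0)}$-eigenvalue $\mu-\tfrac12\ell$. Feeding in the admissible untwisted labels $\mu\in\{r-st:0\le r\le p-2,\ 0\le s\le q-1\}$, the images run over exactly $\{r-st-\tfrac12\ell\}$, which by the preceding theorem is the complete list of Dynkin labels of irreducible $\Z_2$-twisted modules in $\mathcal O$. Because $\sigma^{-1/2}$ is invertible (with inverse $\sigma^{1/2}$) and preserves irreducibility together with the defining conditions of category $\mathcal O$, it restricts to a bijection from untwisted irreducibles in $\mathcal O$ onto twisted irreducibles in $\mathcal O$, which is the first assertion.

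For the second assertion I would first compute the shifted conformal grading produced by the flow, $L(0)^{\mathrm{new}} = L(0) + c_1 h_{(0)} + c_2$ with constants $c_1,c_2$ dictated by the $\Delta$-operator, and verify the ordinary-module conditions directly. The key structural point is that in the $\Z_2$-twisted sector the horizontal (conformal-weight-preserving) subalgebra is the \emph{abelian} Cartan $\C h$, because $e$ and $f$ acquire half-integer modes; hence each $L(0)^{\mathrm{new}}$-eigenspace is a finite sum of one-dimensional $h_{(0)}$-weight spaces, is finite dimensional, and the spectrum is bounded below. Thus every flowed module is ordinary, and in this sector category $\mathcal O$ coincides with the ordinary category. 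To land the identification \emph{at boundary admissible level}, I would match the resulting characters and lowest weights against the known boundary-admissible data, using that $p=2$ is precisely the boundary case and that $\sigma^{-1/2}$ transports the entire admissible category onto the ordinary twisted category.

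The hard part will be the half-integer flow itself. Unlike integer spectral flow (a genuine VOA automorphism), $\sigma^{-1/2}$ reorganizes the grading, so the main work is to confirm that it carries an irreducible admissible module to an \emph{irreducible, lower-bounded} object of the twisted $\mathcal O$ with exactly the claimed highest weight — i.e.\ that no unbounded or reducible object is produced — and, for the final clause, to pin down precisely the correspondence with ordinary modules at boundary admissible level rather than merely matching numerical weight data.
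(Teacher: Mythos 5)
Your outline follows essentially the same route as the paper's own (very terse) derivation: realize $\sigma^{-1/2}$ via Li's $\Delta$-operator, shift the untwisted admissible Dynkin labels $\{r-st\}$ to $\{r-st-\tfrac12\ell\}$, identify this list with the spectrum of the twisted Zhu algebra in (\ref{formula59}), and use invertibility of the flow to get a bijection on irreducibles. That is exactly how the paper deduces the theorem, namely by combining Proposition \ref{admissible} (the weight bookkeeping, which rests on Adamovi\'c's classification of the untwisted admissible modules) with (\ref{formula59}) (which comes from the Dong--Li--Mason algebra $A^{\mathrm{dlm}}(L(\ell,0))$ through the isomorphism $\Delta(1)$ of Proposition \ref{isozhu}). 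So for the first assertion your proposal and the paper coincide in all essentials.

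The genuine divergence is the ordinariness clause, and there your argument has a soft spot. The paper reads ordinariness off the explicit theta-function characters of $\sigma^{-1/2}(L(\Lambda_{k,j}))$ at boundary admissible level, whereas you argue structurally; but the step ``the horizontal subalgebra is the abelian Cartan, \emph{hence} each new $L(0)$-eigenspace is finite dimensional'' is not a proof, since an abelian zero-mode algebra is perfectly compatible with infinite-dimensional eigenspaces (this is exactly what happens if one flows in the \emph{wrong} direction, $\sigma^{+1/2}$, applied to a module with infinite-dimensional top). What closes the gap --- and what you only flag as ``the hard part'' --- is an elementary estimate: writing a weight of the highest weight module $L(\ell,\lambda)$ as $\Lambda-c_0\alpha_0-c_1\alpha_1$ with $c_0,c_1\ge 0$, its conformal depth is $c_0$ and its $h_{(0)}$-eigenvalue is $\lambda+2c_0-2c_1$, so after the $-\tfrac12$ flow its new conformal weight equals $\mathrm{const}+\tfrac12(c_0+c_1)$. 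This shows at once that the spectrum is bounded below and that each new eigenspace receives contributions from only finitely many pairs $(c_0,c_1)$, each of finite multiplicity in category $\mathcal{O}$, hence is finite dimensional. With that estimate your argument is complete; note it applies at every admissible level, which is consistent with, and in fact stronger than, the boundary-admissible statement the paper verifies through characters.
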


\noindent Then we obtain the fusion rules among these $\Z_2$-twisted modules.
\begin{thm}
  For admissible weights $j_i=n_i-(k_i-1)t$ $(i=1,2)$ of the vertex affine algebra $L_{k}(\mathfrak{sl}_{2})$, there are following fusion rules between one admissible module and one twisted module:
  \begin{align} L(k,j_1)\times \sigma^{-\f12}(L(k,j_2))=\sum_{i=\max\{0,n_1+n_2-p\}}^{\min\{n_1-1,n_2-1\}}\sigma^{-\f12}(L(k,j_1+j_2-2i))\end{align}
 if $0\leq k_2-1\leq q-k_1$, and $L(k,j_1)\times \sigma^{-\f12}(L(k,j_2))=0$ otherwise.
\end{thm}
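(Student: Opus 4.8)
The plan is to reduce the twisted fusion computation to the already-understood fusion among untwisted admissible modules, using Theorem 1.2 as the bridge. Theorem 1.2 realizes every irreducible $\Z_2$-twisted module in category $\mathcal{O}$ as $\sigma^{-\f12}$ applied to an untwisted module, so I would first promote $\sigma^{-\f12}$ to an invertible functor on the relevant module category and establish that it is \emph{additive with respect to fusion}. Concretely, realizing $\sigma^{-\f12}$ through Li's $\Delta$-operators, an ordinary intertwining operator $\Y(\cdot,x)$ of type $\binom{L(k,j_3)}{L(k,j_1)\;L(k,j_2)}$ is transported to a $\sigma^{-\f12}$-twisted intertwining operator of type $\binom{\sigma^{-\f12}(L(k,j_3))}{L(k,j_1)\;\sigma^{-\f12}(L(k,j_2))}$ by the rule $\Y(\cdot,x)\mapsto \Y(\Delta(x)\,\cdot\,,x)$; verifying the twisted Jacobi identity for the transported operator shows these two spaces of intertwiners have equal dimension. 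Granting this, I obtain the reduction
\[
L(k,j_1)\times \sigma^{-\f12}(L(k,j_2))\;=\;\sigma^{-\f12}\bigl(L(k,j_1)\times L(k,j_2)\bigr),
\]
so the twisted fusion coefficients coincide with the untwisted ones and what remains is bookkeeping of weights under $\sigma^{-\f12}$.

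For the second step I would invoke the fusion rules among admissible irreducible $L_k(\s_2)$-modules. Writing $j_i=n_i-(k_i-1)t$, these rules split into two independent truncations: a Clebsch--Gordan truncation in the integral labels $n_i$, identical to $\s_2$ fusion at level $p-2$, which produces exactly the summation range $\max\{0,n_1+n_2-p\}\le i\le\min\{n_1-1,n_2-1\}$ together with the output Dynkin labels $j_1+j_2-2i$ (Dynkin labels add, so no index shift is needed here); and a fractional truncation in the labels $k_i$ governing when the product is nonzero. Applying $\sigma^{-\f12}$ term by term, and using the $h_{(0)}$-eigenvalue shift by $-\f12\ell$ recorded in Theorem 1.1, turns each admissible summand $L(k,j_1+j_2-2i)$ into $\sigma^{-\f12}(L(k,j_1+j_2-2i))$. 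The same spectral-flow shift of the $k$-labels is what converts the (a priori symmetric) fractional truncation into the asymmetric condition $0\le k_2-1\le q-k_1$ appearing in the statement, with the product vanishing otherwise; this asymmetry is precisely the fingerprint of fusing an untwisted factor against a twisted one.

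The hard part will be the rigorous justification of the spectral-flow/fusion compatibility, since the product is genuinely one of an untwisted with a $\Z_2$-twisted module and the relevant intertwining operators are twisted. I would underpin the bijection of the first paragraph with the twisted Zhu bimodule formalism: the dimension of the space of twisted intertwining operators of type $\binom{\sigma^{-\f12}(L(k,j_3))}{L(k,j_1)\;\sigma^{-\f12}(L(k,j_2))}$ is computed, via the conjectural twisted analogue of the Frenkel--Zhu theorem, as a Hom space of $A_\sigma(L_k(\s_2))$-bimodules built from the twisted bimodule attached to $L(k,j_1)$. Using the presentation of the twisted Zhu algebra from Theorem 1.1---in particular the relation coming from the maximal singular vector of $L_k(\s_2)$, which is exactly what enforces both the $n$- and $k$-truncations---one checks that this Hom space is one-dimensional for the weights $j_3=j_1+j_2-2i$ in the stated range and zero otherwise, so every occurring fusion coefficient is $0$ or $1$. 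Matching this bimodule computation against the untwisted admissible fusion rules confirms the compatibility and pins down the coefficients.

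Assembling the pieces then yields the claimed formula. The single place where the argument rests on unproven input is the twisted Frenkel--Zhu bimodule theorem flagged in the introduction; assuming it, the fusion rules follow, while the $\Delta$-operator bijection of the first paragraph provides an independent spectral-flow route once that bijection between twisted and untwisted intertwining operators is verified directly.
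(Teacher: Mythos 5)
Your proposal is correct in substance, and its decisive computational step coincides with the paper's actual proof. What the paper does is precisely your third paragraph: it presents the twisted bimodule $A_{\hat{\sigma},\hat{\sigma}}(L(\ell,j_1))$ of the \emph{untwisted} module as a quotient of $\C[x,y]$ (imported from Dong--Li--Mason's untwisted bimodule through the $\Delta(1)$-isomorphism of Proposition \ref{isozhu}, so the singular-vector relations $g_{j_1,i}$ are transported rather than recomputed), tensors it over $A_{\hat{\sigma}}(L(\ell,0))$ with the one-dimensional top space $\C v_{j_2}'$ of the twisted module (eigenvalue $j_2-\f12\ell$), and decomposes the resulting $\C[x,y]/J$ following DLM's Theorem 4.7 --- all of this conditional on the conjectural twisted Frenkel--Zhu theorem, as the paper's theorem itself is. Where you genuinely differ is the primary framing: the paper never transports intertwining operators; it uses Li's $\Delta$-operator only at the Zhu-algebra/bimodule level, whereas your rule $\Y(\cdot,x)\mapsto\Y(\Delta(x)\cdot,x)$, giving $I\binom{L(k,j_3)}{L(k,j_1)\,L(k,j_2)}\cong I\binom{\sigma^{-\f12}(L(k,j_3))}{L(k,j_1)\,\sigma^{-\f12}(L(k,j_2))}$, works at the intertwiner level. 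That route, combined with the classification of the twisted simples (Theorem 1.2), would --- if the twisted Jacobi identity for the transported operators were verified rather than sketched --- actually be \emph{stronger} than the paper's argument: it reduces everything to the proven untwisted DLM fusion rules and bypasses the conjecture entirely. So describing the bimodule computation as an ``underpinning'' of the spectral-flow bijection inverts the natural logical dependence; each route stands on its own, with different unproven inputs.

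One misstatement to correct: the condition $0\le k_2-1\le q-k_1$ is not an asymmetry produced by a spectral-flow shift of the $k$-labels and is not a ``fingerprint'' of fusing untwisted against twisted. Since $k_2\ge 1$ always holds, the condition is equivalent to the symmetric inequality $k_1+k_2\le q+1$, and it appears verbatim in the untwisted DLM fusion rules; under your own reduction it is simply inherited unchanged. (A separate small point: the parametrization $j_i=n_i-(k_i-1)t$ in the statement should be read as $j_i=n_i-1-(k_i-1)t$, as in the untwisted theorem of Section \ref{section4}; otherwise the vacuum would correspond to $n=0$ and the summation range would be empty. This is a typo in the paper, not a flaw in your argument.)
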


\begin{rem}
According to the results in \cite{xu1995intertwining}, one can generalize the symmetries of fusion rules in \cite{frenkel1993axiomatic} to the twisted case:
\begin{align*}
	&N_{jk}^{i}=N_{kj}^{i}\\
	&N_{jk}^{i}=N_{ji}^{k}.
\end{align*}
Therefore, using above results one can obtain the fusion rules for $\sigma(L )\times \sigma(L)$.
\end{rem}

 \noindent We also prove the following theorem.

\begin{thm}
\label{thm:mainThm2}
The category of the ordinary $\Z/2\Z$-twisted modules vertex operator algebra  $L_{\ell}(\mathfrak{sl}_{2})$ at the boundary admissible level $\ell=-2+\frac{2}{q}$ $(\mathrm{gcd}(q,2)=1)$ is semi-simple.
\end{thm}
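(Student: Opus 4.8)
The plan is to deduce the statement from the semisimplicity of the untwisted module category by transporting it across the spectral-flow correspondence established above, with the twisted Zhu algebra serving as the bookkeeping device for the top-level data.

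First I would specialize the twisted Zhu algebra computation to the boundary admissible level $\ell=-2+\f{2}{q}$, where $p=2$. The defining product $\prod_{r=0}^{p-2}\prod_{s=0}^{q-1}(x+\f12\ell-r+st)$ then collapses to $\prod_{s=0}^{q-1}(x+\f12\ell+st)$, whose $q$ roots $-\f12\ell-st$ are pairwise distinct since $t\neq 0$. Hence the $\Z_2$-twisted Zhu algebra is isomorphic to $\C^{q}$, a semisimple commutative algebra. By the twisted analogue of Zhu theory this already yields exactly $q$ simple ordinary $\Z_2$-twisted modules, each with a one-dimensional top on which $h_{(0)}$ acts by one of the $q$ distinct Dynkin labels, and it shows that the top level of any ordinary twisted module is a semisimple module over the twisted Zhu algebra. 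This is the top-level shadow of the semisimplicity we are after.

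The engine of the proof is the spectral-flow classification. The functor $\sigma^{-1/2}$ is invertible with inverse $\sigma^{1/2}$, so on the categories of modules on which it acts it is an exact equivalence of abelian categories and in particular preserves short exact sequences and $\mathrm{Ext}$ groups. By the classification above it carries the untwisted simple modules in category $\mathcal{O}$ bijectively onto the simple ordinary $\Z_2$-twisted modules, and at the boundary admissible level these images are precisely the ordinary twisted modules. I would then upgrade this bijection-on-simples to an equivalence between the untwisted category $\mathcal{O}$ at level $\ell$ and the category of ordinary $\Z_2$-twisted modules, by checking that $\sigma^{\pm1/2}$ respects the finiteness conditions defining the two sides: a category-$\mathcal{O}$ object is sent to an ordinary twisted module, and $\sigma^{1/2}$ of an ordinary twisted module lands in category $\mathcal{O}$ (bounded $h_{(0)}$-spectrum and local nilpotence of the relevant raising operators). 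With this in hand, semisimplicity is imported from the untwisted side: at an admissible level the untwisted category $\mathcal{O}$ of $L_\ell(\mathfrak{sl}_2)$ is semisimple (rationality in category $\mathcal{O}$), and an equivalence preserves semisimplicity, so the category of ordinary $\Z_2$-twisted modules is semisimple.

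The main obstacle is exactly this middle upgrade: turning the classification of irreducible twisted modules into an honest equivalence of abelian categories that matches ``ordinary twisted'' with ``category $\mathcal{O}$'' on all objects and their extensions, not merely on simple objects. One must rule out that spectral flow takes a category-$\mathcal{O}$ object to a non-ordinary twisted module and, conversely, verify essential surjectivity beyond the simples. As a partial independent check I would also show directly that extensions between two simple ordinary twisted modules whose top weights are incongruent modulo $\f12\Z$ must split, since the $\Z_2$-twisted vertex operators shift $L(0)$-weights only within a fixed $\f12\Z$-coset; this disposes of those pairs by weight considerations, leaving the self-extensions and the equal-coset pairs to be handled by the spectral-flow equivalence together with the untwisted semisimplicity.
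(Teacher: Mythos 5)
Your opening step is correct and matches the paper: at $p=2$ the twisted Zhu algebra collapses to $\C[x]/\langle\prod_{s=0}^{q-1}(x+\frac12\ell+st)\rangle\cong\C^{q}$, which is semisimple and classifies the $q$ simple ordinary twisted modules. The genuine gap is exactly the step you flag as ``the main obstacle'' and then never close: upgrading the bijection on simples to an exact equivalence between untwisted category $\mathcal{O}$ and the category of \emph{ordinary} twisted modules. The direction $\sigma^{-\frac12}(\mathcal{O})\subseteq\{\text{ordinary twisted}\}$ can be extracted from semisimplicity of category $\mathcal{O}$ at admissible level, but the direction your argument actually needs --- that $\sigma^{\frac12}(M)$ lies in category $\mathcal{O}$ for an \emph{arbitrary} ordinary twisted module $M$ --- does not follow from ordinariness. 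Category $\mathcal{O}$ requires semisimple action of the full Cartan, in particular of $h_{(0)}$, whereas ordinariness only gives semisimple $L_{(0)}$; and the flowed conformal operator is $L_{(0)}-\frac14 h_{(0)}+\mathrm{const}$, so neither its semisimplicity nor the lower-boundedness of its spectrum on $\sigma^{\frac12}(M)$ is formal. Verifying these for a general, possibly non-semisimple $M$ requires structural information (finite length, absence of a nilpotent part of $h_{(0)}$, linear control of $h_{(0)}$-weights against $L_{(0)}$-weights) that is essentially equivalent to the semisimplicity being proved. The circularity is sharpest for self-extensions $0\to S\to M\to S\to 0$: there a hypothetical nilpotent part of $h_{(0)}$ is a module endomorphism that Schur-type arguments do not kill when the two factors are isomorphic, and your fallback weight-coset check (incongruence modulo $\frac12\Z$) is vacuous since the two tops have equal weights. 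So the one case you explicitly defer to the equivalence is precisely the case the equivalence cannot reach without assuming the conclusion.

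The paper avoids the equivalence entirely and proves $\mathrm{Ext}^{1}$-vanishing directly, in two cases. For simples with distinct conformal weights, the explicit formula $\sigma^{-\frac12}(\Delta(j,u))=\frac{1+4j(1+j-u)-u}{8u}$ shows the weights are incongruent modulo $\Z$, and a Fasquel-type lemma (local finiteness of $L_0$ on the extension, Lemmas 7.3--7.4 of \cite{Fasquel:2020iqf}) then splits any extension --- this is the rigorous version of your mod-$\frac12\Z$ observation. For simples with equal conformal weights (the pairs $j\leftrightarrow u-1-j$ and all self-extensions), the paper passes to top levels: the induced sequence is an extension of $A_{\hat{\sigma}}(L_{k}(\mathfrak{sl}_{2}))$-modules, which splits by the semisimplicity of the twisted Zhu algebra you computed, and splitting of the top forces splitting of the module. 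If you wish to keep the spectral-flow route, the honest repair is to first prove finite length of ordinary twisted modules, reduce to $\mathrm{Ext}^{1}$ between simples, flow a given extension to the untwisted side and show $h_{(0)}$ acts semisimply there when the factors are non-isomorphic (its nilpotent part is a module endomorphism, hence zero by Hom-vanishing), then invoke rationality in category $\mathcal{O}$; but self-extensions will still force you through the Zhu-algebra top argument, i.e.\ through the paper's second case, so the equivalence buys you only what the conformal-weight argument already gives.
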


 \noindent Following the idea in \cite{dong1997vertex} we compute the fusion rules among  admissible modules of $L_{k}(\s_2)$ and their contragredient modules.

 \begin{thm}
For admissible weight $j_i=n_i-1-(k_i-1)t$ $(i=1,2)$ of the vertex affine algebra $L_{k}(\mathfrak{sl}_{2})$, there are following fusion rules:
  \begin{align}
   & L(k,j_1)\times L(k,j_2)=\sum_{i=\max\{0,n_1+n_2-p\}}^{\min\{n_1-1,n_2-1\}}L(k,j_1+j_2-2i), \label{fusion3}\\
   & (L(k,j_1))^*\times L(k,j_2)
   =L(k,j_2)\times (L(k,j_1))^*=
  \left\{
    \begin{array}{ll}
      L(k,-j_1+j_2) , & \hbox{if $n_2-n_1\geq 0$;} \\
      (L(k,j_1-j_2))^*, & \hbox{if $n_2-n_1<0$.}
    \end{array}
  \right. \label{fusion2}\\
  % & L(\ell,j_2)\times L(\ell,-j_1)
%   =\sum_{i=\max\{0,n_1+n_2-p\}}^{\min\{n_1-1,n_2-1\}}L(\ell,-j_1+j_2-2i)\\
   & (L(k,j_1))^*\times (L(k,j_2))^*
   =\sum_{i=\max\{0,n_1+n_2-p\}}^{\min\{n_1-1,n_2-1\}}(L(k,j_1+j_2-2i))^*.\label{fusion4}
   \end{align}
 where $(L(k,j))^*$ denote the contragredient module of the irreducible highest weight module $L(k,j)$.
\end{thm}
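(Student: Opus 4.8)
The plan is to read every fusion coefficient as the dimension of a space of intertwining operators, $N_{M_1M_2}^{M_3}=\dim I\binom{M_3}{M_1\ M_2}$, to prove the ``base'' identity \eqref{fusion3} among genuine highest-weight admissible modules via the Frenkel--Zhu bimodule theorem, and then to deduce the mixed rule \eqref{fusion2} and the all-contragredient rule \eqref{fusion4} by transporting \eqref{fusion3} through the symmetry relations for intertwining operators together with an explicit description of the contragredient modules $(L(k,j))^*$. Commutativity in \eqref{fusion2}, namely $(L(k,j_1))^*\times L(k,j_2)=L(k,j_2)\times(L(k,j_1))^*$, is immediate from the skew-symmetry $N_{M_1M_2}^{M_3}=N_{M_2M_1}^{M_3}$ recorded in the Remark.

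For \eqref{fusion3} I would recall that the top level $L(k,j)(0)$ is the highest-weight $\s_2$-module of highest weight $j$, regarded as a module over Zhu's algebra $A(L_\ell(\s_2))$, a finite-dimensional quotient of $\C[x]$ whose spectrum is the set of admissible highest weights, and that the Frenkel--Zhu bimodule theorem identifies
\[
N_{L(k,j_1)\,L(k,j_2)}^{L(k,j_3)}=\dim\Hom_{A(V)}\bigl(A(L(k,j_1))\otimes_{A(V)}L(k,j_2)(0),\,L(k,j_3)(0)\bigr).
\]
Decomposing this tensor product by the $\s_2$ Clebsch--Gordan rule yields the summands $L(k,j_1+j_2-2i)$, each with multiplicity one, while the relations cutting $A(L_\ell(\s_2))$ down to a finite-dimensional algebra truncate $i$ from below by $n_1+n_2-p$ and from above by $\min\{n_1-1,n_2-1\}$; this is exactly \eqref{fusion3}. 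I would treat the explicit Clebsch--Gordan bookkeeping, which reproduces the known admissible $\s_2$ fusion, as routine.

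For the remaining two identities I would first identify $(L(k,j))^*$ as the lowest-weight module whose top level is the $\s_2$-contragredient of $L(j)$, so that $M\mapsto M^*$ is an involution permuting the irreducibles. The decisive tool is then, beyond the $S_3$-type symmetries of the Remark, the charge-conjugation symmetry $N_{M_1M_2}^{M_3}=N_{(M_1)^*(M_2)^*}^{(M_3)^*}$. Granting it, \eqref{fusion4} is immediate: it transports the whole decomposition \eqref{fusion3} verbatim to the contragredient family. For \eqref{fusion2} I would combine a single contragredient symmetry with skew-symmetry to rewrite $N_{(L(k,j_1))^*\,L(k,j_2)}^{M_3}$ as a fusion among highest-weight modules governed by \eqref{fusion3}; the latter then forces a single surviving term, and comparing the integer parts $n_1,n_2$ shows the unique solution is the genuine highest-weight module $L(k,-j_1+j_2)$ when $n_2-n_1\geq 0$ and the contragredient $(L(k,j_1-j_2))^*$ when $n_2-n_1<0$.

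The main obstacle is exactly this contragredient bookkeeping in the non-rational, category-$\mathcal{O}$ setting, where the modules are not self-dual and the ambient category is not semisimple, so that the charge-conjugation symmetry does not follow formally from the generic $S_3$/Frenkel--Huang--Lepowsky relations. Concretely, one must justify $N_{M_1M_2}^{M_3}=N_{(M_1)^*(M_2)^*}^{(M_3)^*}$ here --- either by constructing the contragredient of an intertwining operator directly, or by running the Frenkel--Zhu bimodule computation a second time for the contragredient modules $(L(k,j))^*$, using their top levels as $A(V)$-modules via the transpose action. One must also verify that no ``mixed'' irreducible contributes an extra intertwining operator, and pin down from the sign of $n_2-n_1$ which family the unique term of \eqref{fusion2} lands in. Establishing this clean single-term answer, with its correct case split, is where the real work lies.
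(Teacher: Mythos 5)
Your strategy inverts the paper's: you try to avoid ever computing anything about the contragredient modules by transporting the highest-weight rule through symmetries of intertwining-operator spaces, whereas the paper's proof consists precisely of that computation. Concretely, the paper first establishes (Proposition \ref{bimod}) an explicit presentation $A((L(\ell,j))^*)\cong\C[x,z]/(\cdots)$ of the Zhu bimodule of the contragredient module, via the singular vectors of contragredient Verma modules, Malikov--Feigin--Fuchs-type projection formulas, and a shifted conformal vector; it then feeds this into Frenkel--Zhu. For the mixed rule the relevant space $A((L(\ell,j_1))^*)\otimes_{A(L(\ell,0))}\C v_{j_2}$ collapses to a single line on which $x$ acts by $j_2-j_1$, and the one-term answer with its case split is read off from that line: the uniqueness comes from the smallness of the bimodule, not from any symmetry bookkeeping. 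Your substitutes for this are both gaps. The charge-conjugation symmetry $N_{M_1M_2}^{M_3}=N_{M_1^*M_2^*}^{M_3^*}$ you need for the all-contragredient rule is left unproven, and your own fallback (``run Frenkel--Zhu again for the contragredients'') \emph{is} the paper's proof, so at that point the proposal reduces to what it was supposed to replace. (There is in fact a cheap argument you missed: the Chevalley involution $\theta$ of $\widehat{\mathfrak{sl}}_2$ satisfies $L(\ell,j)^{\theta}\cong(L(\ell,j))^*$ for these irreducibles, and twisting all three modules of an intertwining operator by an automorphism never changes multiplicities; that would legitimately give the all-contragredient rule from the first rule.)

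The more serious failure is your derivation of the mixed rule, because when the transport is actually carried out it does not produce a single term. Adjoint symmetry plus skew-symmetry gives
\begin{equation*}
N_{(L(\ell,j_1))^*\,L(\ell,j_2)}^{(L(\ell,c))^*}\;=\;N_{L(\ell,c)\,L(\ell,j_2)}^{L(\ell,j_1)},
\end{equation*}
so the contragredient outputs of $(L(\ell,j_1))^*\times L(\ell,j_2)$ are indexed by \emph{all} admissible $c$ for which $L(\ell,j_1)$ occurs in $L(\ell,c)\times L(\ell,j_2)$. Feeding in the first rule, such $c$ exist for every $n$-index $m=n_1-n_2+1+2i$, $i\geq 0$, inside the allowed window, not only $m=n_1-n_2+1$: already for $p\geq 4$ and $j_1=j_2$ of $n$-index $2$, both $c=0$ and $c=2$ qualify, so your argument yields at least $L(\ell,0)\oplus (L(\ell,2))^*$ where the theorem asserts exactly $L(\ell,0)$. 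Hence the symmetries do not ``force a single surviving term''; executed honestly they are incompatible with the statement you are proving, which means either the Frenkel--Huang--Lepowsky symmetries you invoke are not available for these modules (a real possibility here: infinite-dimensional $L(0)$-weight spaces, contragredients lying outside category $\mathcal{O}$, no rigidity) or your method proves something other than the theorem. In addition, the highest-weight outputs in the mixed rule (the case $n_2\geq n_1$) are not even in the same $S_3$-orbit as the first rule --- their three ``legs'' involve two contragredients --- so skew plus adjoint symmetry can never reach them; you would need the unproven charge conjugation there as well. Either way the reduction you describe cannot be completed, and the contragredient bimodule computation you hoped to treat as avoidable is exactly the content of the paper's proof.
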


\noindent We further use this result to obtain the fusion rules among $\Z_2$-twisted modules and their contragredient modules:
\begin{thm}
\begin{align}
     &  (L(\ell,j_1))^*\times \sigma^{\f12}((L(\ell,j_2))^*)
 =\sum_{i=\max\{0,n_1+n_2-p\}}^{\min\{n_1-1,n_2-1\}}\sigma^{\f12}((L(\ell,j_1+j_2-2i))^*).\\
      & (L(\ell,j_1))^*\times \sigma^{\f12}(L(\ell,j_2))
   =
  \left\{
    \begin{array}{ll}
      \sigma^{\f12}(L(\ell,-j_1+j_2)) , & \hbox{if $n_2-n_1\geq 0$;} \\
      \sigma^{\f12}((L(\ell,j_1-j_2))^*), & \hbox{if $n_2-n_1<0$.}
    \end{array}
  \right.\\
      &
   L(\ell,j_2)\times \sigma^{-\f12}((L(\ell,j_1))^*)=
  \left\{
    \begin{array}{ll}
      \sigma^{-\f12}(L(\ell,-j_1+j_2)) , & \hbox{if $n_2-n_1\geq 0$;} \\
      \sigma^{-\f12}((L(\ell,j_1-j_2))^*), & \hbox{if $n_2-n_1<0$.}
    \end{array}
  \right.
  \end{align}
\end{thm}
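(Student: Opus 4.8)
The plan is to deduce all three identities from the untwisted fusion rules \eqref{fusion2} and \eqref{fusion4} by transporting them through the half-integer spectral flow $\sigma^{\pm\f12}$. The engine is the compatibility
\[
M \times \sigma^{g}(N) \;\cong\; \sigma^{g}(M \times N), \qquad g \in \{\tfrac12,-\tfrac12\},
\]
valid for an untwisted module $M$ and an untwisted module $N$; this is precisely the property that converts the admissible $\times$ admissible rule \eqref{fusion3} into the admissible $\times$ twisted rule recorded above. I would isolate this as a lemma and then read off each of the three equations by substitution, with the level written $\ell$ exactly as in \eqref{fusion2}--\eqref{fusion4}.

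To establish the compatibility I would exhibit, for $M$ untwisted, a linear isomorphism of spaces of intertwining operators
\[
I\binom{\sigma^{g}W_3}{M\;\sigma^{g}N} \;\xrightarrow{\ \sim\ }\; I\binom{W_3}{M\;N}.
\]
Since the fusion coefficients are the dimensions of these spaces, the isomorphism yields the displayed relation. The map is built from Li's $\Delta(h,z)$-operator, the concrete incarnation of $\sigma^{g}$ that was used to identify the $\Z_2$-twisted modules of $L_{\ell}(\s_2)$ with spectral flows of untwisted ones: one conjugates a given intertwining operator $\mathcal{Y}(\cdot,z)$ by the appropriate $\Delta$-factor and verifies that the result satisfies the $\Z_2$-twisted Jacobi identity. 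Structurally this reflects that $\sigma^{g}$ is realized by fusion with an invertible (twist) object, so that the compatibility is just associativity combined with commutativity of the fusion product.

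Granting the lemma, the three equations are immediate. Applying $\sigma^{\f12}$ to the second factor of \eqref{fusion4} gives
\begin{align*}
(L(\ell,j_1))^*\times \sigma^{\f12}\!\big((L(\ell,j_2))^*\big)
&= \sigma^{\f12}\!\Big((L(\ell,j_1))^*\times (L(\ell,j_2))^*\Big) \\
&= \sum_{i}\sigma^{\f12}\!\big((L(\ell,j_1+j_2-2i))^*\big),
\end{align*}
which is the first identity. The second follows by applying $\sigma^{\f12}$ to the $(L(\ell,j_1))^*\times L(\ell,j_2)$ form of \eqref{fusion2}, and the third by applying $\sigma^{-\f12}$ to the $L(\ell,j_2)\times (L(\ell,j_1))^*$ form of \eqref{fusion2}; in both cases the two alternatives $n_2-n_1\ge 0$ and $n_2-n_1<0$ are inherited verbatim, with $\sigma^{\pm\f12}$ distributed over the single summand.

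The main obstacle is the rigorous justification of the compatibility lemma in the twisted setting. One must check that the $\Delta$-conjugated operator genuinely obeys the $\Z_2$-twisted Jacobi identity, which requires controlling the half-integer monodromy introduced by $\sigma^{\pm\f12}$ and matching the induced weight shifts; here the symmetries $N_{jk}^{i}=N_{kj}^{i}=N_{ji}^{k}$ of twisted fusion coefficients are what ensure that the one-sided lemma covers both the $M\times\sigma^{g}N$ and $\sigma^{g}N\times M$ orderings appearing in the statement. A secondary, purely bookkeeping point is to invoke the $\ell=-\f12$ spectral-flow theorem to confirm that each $\sigma^{\pm\f12}$ on the right-hand sides lands on an honest ordinary $\Z_2$-twisted module of the asserted type rather than a merely formal spectral flow; this is also what guarantees that no extra vanishing condition arises here, in contrast with the admissible $\times$ twisted rule where the flow can leave category $\mathcal{O}$.
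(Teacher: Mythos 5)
Your proposal is correct in substance but follows a genuinely different route from the paper. The paper never manipulates intertwining operators directly: it works entirely at the level of Zhu bimodules, using Li's operator evaluated at $z=1$, namely $\Delta_{\f12}(1)$, to establish the isomorphism $A(L(\ell,j)^*)\cong A_{\hat{\sigma}',\hat{\sigma}'}(L(\ell,j)^*)$ with $\hat{\sigma}'=e^{-\pi i h_{(0)}/2}$, and then repeats the computation of Section \ref{section3}: tensor the twisted bimodule with the one-dimensional top space over the twisted Zhu algebra and read off the fusion rules through the twisted Frenkel--Zhu bimodule formula, which in the twisted setting is only a conjecture (the Conjecture quoted from \cite{zhu2022bimodues}). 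You instead keep $z$ formal and conjugate the intertwining operators themselves, $\mathcal{Y}(\cdot,z)\mapsto\mathcal{Y}(\Delta(z)\cdot,z)$, to obtain an isomorphism of spaces
\[
I\binom{\sigma^{g}W_3}{M\;\sigma^{g}N}\;\cong\;I\binom{W_3}{M\;N},\qquad g=\pm\tfrac12,
\]
and then feed in the untwisted rules \eqref{fusion2} and \eqref{fusion4}; your substitutions into all three identities are correct, and the use of the symmetries $N_{jk}^{i}=N_{kj}^{i}=N_{ji}^{k}$ to handle the ordering of factors is consistent with the paper's own remark citing \cite{xu1995intertwining}. What each approach buys: the paper's route is uniform with its Sections \ref{section3}--\ref{section4} machinery and yields the Zhu-bimodule isomorphism as a statement of independent interest, but its twisted fusion-rule output is conditional on the twisted Frenkel--Zhu conjecture; your route, once the $\Delta$-conjugation lemma is rigorously verified for twisted intertwining operators (the half-integer monodromy check you correctly flag as the main obstacle, and which is in the spirit of \cite{li1996local}), derives the twisted rules unconditionally from the proven untwisted ones --- a cleaner logical status. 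One caution: your structural gloss that the compatibility is ``just associativity and commutativity'' because $\sigma^{g}$ is fusion with an invertible object should not be leaned on, since at admissible level these module categories are not known to carry the requisite braided tensor structure; the hands-on conjugation and verification of the $\Z_2$-twisted Jacobi identity that you describe is the actual proof, not a dispensable technicality.
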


We show that the ordinary twisted modules satisfy the twisted MLDEs. 
For $A^{(1)}_{1}$ with boundary admissible level,  if we only consider $q$-series, the normalized characters of those modules form the complete solutions of a $(u+1)/2$-order $\Gamma^{0}(2)$ MLDE. For $A^{(1)}_{2}$ at boundary admissible level, we construct some ordinary $e^{2\pi i v_{(0)}}$-twisted modules by the spectral flow of untwisted modules along different directions in the weight lattice (see Section 2.5), whose normalized characters satisfy a second-order $\Gamma^{0}(2)$ MLDE.  For $\mathrm{BP}^{k}$-algebra with $k=-9/4$, we show that characters of twisted modules are solutions of a third-order MLDE under full $SL(2,\mathbb{Z})$ group.  Finally we study characters of $\mathbb{Z}_{2}$-twisted modules and spectral flowed modules of the affine vertex algebra $\mathcal{L}_{\mathfrak{d}_{4}}(-2,0)$. Since it is non-admissible, one  can not use Kac-Wakimoto formula to write down the characters of simple modules directly.  
%{\color{blue}Fortunately, we can use  results from physics to say something about relations between simple modules of $\mathcal{L}_{\mathfrak{d}_{4}}(-2,0)$ and spectral flowed modules.}
 Fortunately, we can get some simple modules of $\mathcal{L}_{\mathfrak{d}_{4}}(-2,0)$ from spectral flowed modules. We also get one ordinary $\mathbb{Z}_{2}$-twisted module, whose character satisfies a second-order $\Gamma^{0}(2)$-MLDE. In general, for the simple affine vertex algebra $L_{k}(\mathfrak{g})$ with $\mathfrak{g}$ being a Lie algebra of DC series and $k=-h^{\vee}/6-1$, we make the following conjecture on the MLDE of characters of its $\mathbb{Z}_{2}$-twisted modules.

\begin{conj}\label{conjecture}
Let $V$ be a simple affine vertex algebras associated with the $\mathrm{DC}$ series, $$A_{1}\subset A_{2} \subset G_{2} \subset D_{4} \subset F_{4}\subset E_{6} \subset E_{7} \subset E_{8}$$ at level $-h^{\vee}/6-1$. The normalized $\mathbb{Z}_{2}$-twisted characters $\chi^{\mathrm{twi}}(q)$ are solutions of a second-order $\Gamma^{0}(2)$-$\mathrm{MLDE}$ with suitable coefficients $a_{1}$ and $a_{2}$ but without a $D^{(1)}_{q}$ term,
\begin{align}
\left(D^{(2)}_{q}+a_{1}\Theta_{0,2}+a_{2}\Theta_{1,1}\right)\chi^{\mathrm{twi}}(q)=0
\end{align}
\end{conj}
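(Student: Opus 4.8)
The plan is to transplant the argument of Arakawa--Kawasetsu \cite{Arakawa:2016hkg} for the untwisted vacuum character into the $\mathbb{Z}_2$-twisted sector, using the quasi-lisse twisted-MLDE machinery of \cite{Hao2022}. For $\mathfrak{g}$ in the DC series and $k=-h^\vee/6-1$ the vertex algebra $L_k(\mathfrak{g})$ is quasi-lisse, its associated variety being the closure of the minimal nilpotent orbit; hence by \cite{Hao2022} the normalized characters of the ordinary $\mathbb{Z}_2$-twisted modules span a finite-dimensional space, invariant under $\Gamma^0(2)$, annihilated by a monic twisted MLDE of some finite order $n$. The statement then decomposes into two independent claims: (i) the minimal order is $n=2$; and (ii) the weight-two form multiplying $D^{(1)}_q$ vanishes, leaving only the weight-four forms $\Theta_{0,2},\Theta_{1,1}$, which span $M_4(\Gamma^0(2))$. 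The modular-weight bookkeeping already forces this shape of equation: the solutions are weight $0$, $D^{(2)}_q$ raises weight by $4$, so the coefficient of $\chi$ must lie in $M_4(\Gamma^0(2))$ while the coefficient of the weight-two object $D^{(1)}_q\chi$ must lie in $M_2(\Gamma^0(2))$.

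For (i) I would first record the central charge uniformly. With $k=-h^\vee/6-1$ one has $k+h^\vee=(5h^\vee-6)/6$, and substituting the DC dimension formula $\dim\mathfrak{g}=\tfrac{2(h^\vee+1)(5h^\vee-6)}{h^\vee+6}$ into $c=\tfrac{k\dim\mathfrak{g}}{k+h^\vee}$ gives the closed form $c=-2(h^\vee+1)$. To pin the order to $2$ I would count the ordinary $\mathbb{Z}_2$-twisted simple modules by extending the spectral-flow classification proved above for $\mathfrak{sl}_2$ (every ordinary twisted module is the $\ell=-\tfrac12$ spectral flow of an ordinary untwisted module): at this level one expects exactly two ordinary untwisted modules, so the twisted characters span a two-dimensional space and $n=2$. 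For $\mathfrak{g}=A_1$ this is already contained in the boundary-admissible computation above ($\ell=-4/3$, $q=3$, giving order $(q+1)/2=2$), which serves as the base case.

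For (ii) I would run the indicial analysis at the cusp. Writing the general second-order $\Gamma^0(2)$-MLDE as
\[
D^{(2)}_q\chi+\lambda\,\varphi_2\,D^{(1)}_q\chi+\bigl(a_1\Theta_{0,2}+a_2\Theta_{1,1}\bigr)\chi=0,
\]
with $\varphi_2=1+O(q)$ the weight-two generator of $M_2(\Gamma^0(2))$, and using $E_2\to1$ as $q\to0$, the indicial polynomial for a leading exponent $\alpha$ is $\alpha^2+(\lambda-\tfrac16)\alpha+(\text{const})=0$, so $\alpha_1+\alpha_2=\tfrac16-\lambda$. Since the two leading exponents are $\alpha_i=h^{\mathrm{twi}}_i-c/24$, the desired vanishing $\lambda=0$ is \emph{equivalent} to
\[
h^{\mathrm{twi}}_1+h^{\mathrm{twi}}_2-\tfrac{c}{12}=\tfrac16,\qquad\text{i.e.}\qquad h^{\mathrm{twi}}_1+h^{\mathrm{twi}}_2=-\tfrac{h^\vee}{6}
\]
after inserting $c=-2(h^\vee+1)$. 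The remaining task is to compute the two twisted conformal weights from the $\ell=-\tfrac12$ spectral-flow (Li $\Delta$-operator) shift applied to the two untwisted highest weights, and to verify the displayed identity as a rational identity in $h^\vee$, proving the whole series simultaneously; the coefficients $a_1,a_2$ are then fixed by matching the subleading $q$-coefficients.

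The main obstacle is the input to (i) and the weight computation in (ii) for $\mathfrak{g}$ beyond $\mathfrak{sl}_2$. The twisted Zhu algebra, and with it the exact classification and count of ordinary $\mathbb{Z}_2$-twisted modules, is established here only for $\mathfrak{sl}_2$; for general DC $\mathfrak{g}$ one must identify the correct order-two automorphism $e^{2\pi i v_{(0)}}$, compute its spectral-flow shift of $L_0$, and show there are precisely two ordinary twisted modules. Proving the exponent-sum identity $h^{\mathrm{twi}}_1+h^{\mathrm{twi}}_2=-h^\vee/6$ uniformly is the crux: it is exactly the twisted reflection of the untwisted Arakawa--Kawasetsu relation (the single nonvacuum untwisted module also has conformal weight $-h^\vee/6$), and making this correspondence precise for every member of the series, rather than case by case, is where the real work lies.
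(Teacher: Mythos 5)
Your proposal and the paper part ways at the very first step: the paper makes no attempt at a uniform proof. What it calls ``showing the conjecture for classical $\mathfrak{g}$'' is three separate explicit computations ($A_{1}$ at $k=-\tfrac43$, $A_{2}$ at $k=-\tfrac32$, $D_{4}$ at $k=-2$, in Sections 5 and 6): in each case the ordinary $\mathbb{Z}_{2}$-twisted modules are produced by half-integral spectral flow, their characters are written in closed theta-function form, and the second-order $\Gamma^{0}(2)$-MLDE without $D^{(1)}_{q}$ term is then verified directly on the $q$-expansion; the exceptional members remain open, which is why the statement is a conjecture at all. Your indicial-equation reformulation --- that, given $c=-2(h^{\vee}+1)$, the absence of the $D^{(1)}_{q}$ term is equivalent to the exponent identity $h^{\mathrm{twi}}_{1}+h^{\mathrm{twi}}_{2}=-h^{\vee}/6$ --- is correct modular-forms bookkeeping, is consistent with the paper's data (for $A_{1}$ the two twisted weights are $-\tfrac1{12}$ and $-\tfrac14$, summing to $-\tfrac13=-h^{\vee}/6$), and is a genuinely different and more illuminating way to organize the problem than anything in the paper. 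But it is conditional on premises the paper does not supply, and one of them is false.

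Concretely, two gaps. First, your route to order two --- ``at this level one expects exactly two ordinary untwisted modules, so the twisted characters span a two-dimensional space'' --- rests on a wrong count. By the paper's own statements, the nonvacuum admissible $\widehat{\mathfrak{sl}}_{2}$-modules at $k=-\tfrac43$ have non-integral finite Dynkin labels, hence infinite-dimensional top spaces, and are \emph{not} ordinary; they become ordinary only after the $\ell=-\tfrac12$ spectral flow (this is precisely the point of the theorem in Section 2.6, and of Figure 1). For $D_{4}$ at $k=-2$, Per\v{s}e's theorem quoted in Section 6 says the vacuum is the \emph{unique} ordinary untwisted module. So counting ordinary untwisted modules cannot pin the order to $2$, and the theorem of \cite{Hao2022} only yields a twisted MLDE of some finite order. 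Second, and more seriously for the open cases: your equivalence needs \emph{both} indicial exponents of the putative second-order equation to be conformal weights of ordinary twisted modules. For $D_{4}$ the paper exhibits exactly one ordinary twisted module, $\sigma^{-\frac12\bar{\Lambda}_{2}}(\mathcal{L}_{\mathfrak{d}_{4}}(-\Lambda_{2}))$; the second solution of its MLDE is not identified with any module, so the identity $h^{\mathrm{twi}}_{1}+h^{\mathrm{twi}}_{2}=-h^{\vee}/6$ cannot even be formulated from module data there. Since $D_{4}\subset F_{4}\subset E_{6}\subset E_{7}\subset E_{8}$ are exactly the non-admissible members where the conjecture is open, the strategy as stated fails precisely where it would be needed; what survives is a nice consistency check on the admissible cases $A_{1}$, $A_{2}$, which the paper settles by direct computation anyway.
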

\noindent We show that this conjecture is true when $\mathfrak{g}$ is a classical Lie algebra.

\begin{rem}\label{remark}
For affine VOA of type $A_{1}$ and $A_{2}$ at boundary admissible level, one can get ordinary $\mathbb{Z}_{2}$-twisted module $\sigma^{-\frac{1}{2}\Lambda}(M_{\mathrm{vac}})$ by taking a spectral flow on the vacuum module $M_{\mathrm{vac}}$ along a special direction $-\frac{1}{2}\Lambda$ in the  weights lattice. We believe that this phenomenon is universal for all affine VOAs associated with the semisimple Lie algebra $\mathfrak{g}$ at admissible level $k$.  So far, for a general VOA $V$, we do not have a systematic approach to get the ordinary $\mathbb{Z}_{2}$-twisted modules from irreducible non-twisted $V$-modules in the category $\mathcal{O}$.
\end{rem}

%Future work:
%1. spectral flow and irreducibility,
%2. prove the remark and conjecture.

In this work, the usage of the spectral flow is crucial, as we use it to obtain twisted modules and new modules. In particular, we hope to prove Conjecture \ref{conjecture} and address some of the questions raised in above Remark \ref{remark} in the future work. It is also interesting to investigate the tensor category structures for some subcategory of the relaxed highest weight category for the affine VOA at fractional  level.

% In [Hao2022], we proved that the characters of twisted modules of quasi-lisse vertex algebras satisfy certain twisted MLDE. We provide some further explicit examples for this theorem in this paper.

We summarize the main contents in this paper.
In Section \ref{section2}, we recall some basic notions of admissible representations, their (twisted) characters, the modularity of their characters under the transformations $\tau\rightarrow -\f{1}{\tau}$ , and semi-simplicity of the $\Z/2\Z$-twisted modules of $L_k(\s_2)$ at boundary admissible level.
 In Section \ref{section3}, we study the twisted Zhu's bimodules of the $\Z_2$-twisted modules of $L_k(\s_2)$ at admissible level, then use it to classify twisted modules and compute their fusion rules.  In Section \ref{section4}, we further calculate  fusion rules among the highest weight  modules and their contragredient modules for $L_k(\s_2)$ at admissible level. In Section \ref{section5}, we study the $e^{2\pi i v_{(0)}}$-twisted modules of affine VOAs at admissible level, then derive MLDEs their characters satisfy. In Section \ref{section6}, we discuss on the generalization of the result in the previous section to  $\mathcal{L}_{\mathfrak{d}_{4}}(-2\Lambda_{0})$.

\section*{Acknowledgement}

The authors wish to express their gratitude to Peng Shan for numerous fruitful discussions. HL would like to thank Antun Milas for some motivating questions and suggestions back in 2021. The authors would also like to thank Tomoyuki Arakawa, Thomas Creutzig, Antun Milas for valuable suggestions to improve some statements in the paper. BL, HL and WY are supported by  national key research
and development program of China (NO. 2020YFA0713000) and Dushi program of Tsinghua. WY is also supported by NNSF of China with
Grant NO: 11847301 and 12047502.

 %In Section $8$, we formulate a conjecture about the irreducible twisted modules of vertex operator algebras associated with the $\mathrm{DC}$ series at level $-h^{\vee}/6-1$.

%\textbf{Notations}
%Affine VOA: $ V_{k}(\mathfrak{sl}_{2}) $, $L_{k}(\s_2)$.
%
%Modules of affine VOA: $\mathcal{M}(\Lambda_{k,j})$, $L(\ell,j)$.
%
%Twisted modules of affine VOA: $\sigma^{-\f12}(\mathcal{M}(\Lambda_{k,j}))=V^{twi}_k(\s_2),$, $\sigma^{-\f12}(L(\ell,j)).$
\section{Character formulae at admissible level and their modularity}\label{section2}
  Let $\hat{\Delta}$, $\hat{\Delta}_{+}$, $\hat{\Delta}_{+}^{re}$  be all roots, positive roots, positive real roots of an affine Lie algebra $\hat{\mathfrak{g}}$. Let $\ell=\text{dim}(\mathfrak{g})$.
Given $\lambda\in \mathfrak{h}^*$, the set of  $\lambda$-integral roots is   $\hat{\Delta}^{\lambda}:=\{\al\in \hat{\Delta}^{re}|\lambda(\al^{\vee})\in \mathbb{Z}\}$.   $\lambda$ is  an {\em admissible weight} if the following two properties hold
\begin{itemize}
    \item $(\lambda+\rho)(\al^{\vee})\notin \mathbb{Z}_{\leq 0}$ for all $\al\in \hat{\Delta}_{+}^{re}$,
    \item $\mathbb{Q}\hat{\Delta}^{\lambda}=\mathbb{Q}\hat{\Delta}.$
\end{itemize}
Roughly speaking, the admissible weight is integrable with respect to $\hat{\mathfrak{g}}_{\hat{\Delta}^{\lambda}}$ after a shift by the Weyl vector $\rho$ of $\hat{\mathfrak{g}}$. If there exists a further isometry $\phi$ of $\mathfrak{h}^*$ such that $\phi(\hat{\Delta}^{\lambda})=\hat{\Delta}$,  $\lambda$ is called a {\em principal} admissible weight, i.e., a principal admissible weight is integrable with respect to $\hat{\mathfrak{g}}$ after a shift of a Weyl vector.

\begin{thm}\cite{kac1988modular}
Let $\hat{\mathfrak{g}}$ be a Kac-Moody Lie algebra with a symmetrizable generalized Cartan matrix, and $\lambda\in \mathfrak{h}^{*}$ be an admissible weight. Then the character $L(\lambda)$ is given by the following formula:
\begin{align}\label{cfaal1}
    \mathrm{ch}L(\lambda)=\frac{1}{R}\cdot \displaystyle\sum_{w\in W_{\lambda}}\varepsilon(w) e^{w(\lambda+\rho)},
 \end{align}
where $R:=e^{\rho}\prod_{n=1}^{\infty}(1-q^{n})^{\ell}\prod_{\al\in\hat{\Delta}_{+}}(1-e^{\al}q^{n})(1-e^{-\al}q^{n-1})$ is the Kac-Weyl denominator, and $W^{\lambda}:=\{r_{\al}|\al\in \hat{\Delta}^{\lambda}\}$.
\end{thm}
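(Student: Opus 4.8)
The plan is to realize $\mathrm{ch}\,L(\lambda)$ as the Euler characteristic of a Bernstein--Gelfand--Gelfand (BGG) type resolution of $L(\lambda)$ by Verma modules indexed by the integral Weyl group $W_{\lambda}=\langle r_{\alpha}:\alpha\in\hat{\Delta}^{\lambda}\rangle$, and then to read off \eqref{cfaal1} from the Verma character formula together with the affine denominator identity. Recall that for $\mu\in\mathfrak{h}^{*}$ one has $\mathrm{ch}\,M(\mu)=e^{\mu}\prod_{\alpha\in\hat{\Delta}_{+}}(1-e^{-\alpha})^{-\mathrm{mult}\,\alpha}$, and the Weyl--Kac denominator identity gives $\prod_{\alpha\in\hat{\Delta}_{+}}(1-e^{-\alpha})^{\mathrm{mult}\,\alpha}=e^{-\rho}R$, so that $\mathrm{ch}\,M(\mu)=e^{\mu+\rho}/R$. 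Hence, once one has an exact complex
\begin{equation*}
\cdots\longrightarrow \bigoplus_{w\in W_{\lambda},\ \ell(w)=i}M(w\cdot\lambda)\longrightarrow\cdots\longrightarrow M(\lambda)\longrightarrow L(\lambda)\longrightarrow 0,
\end{equation*}
where $w\cdot\lambda=w(\lambda+\rho)-\rho$ is the dot action and $\ell$ the length function on $W_{\lambda}$, taking the alternating sum of characters gives $\mathrm{ch}\,L(\lambda)=\sum_{w\in W_{\lambda}}\varepsilon(w)\,\mathrm{ch}\,M(w\cdot\lambda)=\frac{1}{R}\sum_{w\in W_{\lambda}}\varepsilon(w)e^{w(\lambda+\rho)}$, since $\varepsilon(w)=(-1)^{\ell(w)}$ and $w\cdot\lambda+\rho=w(\lambda+\rho)$. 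Note that the full denominator $R$ appears automatically because the resolution is by genuine (not parabolic) Verma modules.

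The core of the argument is the construction of this resolution, and here the two defining properties of an admissible weight enter decisively; so first I would translate admissibility into statements about $\hat{\Delta}^{\lambda}$ and $W_{\lambda}$. For an integral real root $\alpha\in\hat{\Delta}^{\lambda}_{+}$ one has $\rho(\alpha^{\vee})\in\mathbb{Z}$ and hence $(\lambda+\rho)(\alpha^{\vee})\in\mathbb{Z}$, so the first admissibility condition $(\lambda+\rho)(\alpha^{\vee})\notin\mathbb{Z}_{\leq 0}$ forces $(\lambda+\rho)(\alpha^{\vee})\geq 1$; that is, $\lambda+\rho$ is \emph{regular} and \emph{dominant} with respect to $\hat{\Delta}^{\lambda}$ (for non-integral $\alpha$ the condition is automatic). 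The second condition $\mathbb{Q}\hat{\Delta}^{\lambda}=\mathbb{Q}\hat{\Delta}$ guarantees that $\hat{\Delta}^{\lambda}$ is again a full-rank affine root system, so $W_{\lambda}$ is a Coxeter group carrying a length function and Bruhat order of the expected type, and the complex above is finite in each homological degree. Thus admissibility is exactly the hypothesis that makes $\lambda$ behave, after the $\rho$-shift, like a dominant regular weight for the integral subsystem.

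The hard part will be establishing exactness of the complex, equivalently showing that $L(\lambda)$ has the expected composition series with multiplicities governed by trivial Kazhdan--Lusztig data. I would argue in two stages. The strong linkage principle for category $\mathcal{O}$ confines the composition factors of $M(\lambda)$ to $\{L(w\cdot\lambda):w\in W_{\lambda}\}$, ordered by Bruhat order, which already forces $\mathrm{ch}\,L(\lambda)$ to be an integral combination of the $\mathrm{ch}\,M(w\cdot\lambda)$; the delicate point is pinning down these multiplicities. Because $\lambda+\rho$ is regular dominant for $\hat{\Delta}^{\lambda}$, the relevant Verma embeddings are the ``simple-reflection'' ones whose singular vectors generate the maximal submodule, so the resolution is the naive BGG one with all Kazhdan--Lusztig polynomials equal to $1$. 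To make this rigorous for principal admissible $\lambda$, I would transport the problem through the isometry $\phi$ with $\phi(\hat{\Delta}^{\lambda})=\hat{\Delta}$: then $\lambda$ corresponds to an honestly dominant integral weight of an affine algebra (possibly of twisted or rescaled type), for which the BGG resolution and the Weyl--Kac formula are classical, and the general admissible case reduces to the principal one via the structure of $\hat{\Delta}^{\lambda}$. This triviality of multiplicities — the statement that admissibility is precisely the condition making the character ``integrable-like'' — is where essentially all the content lies and is the step I expect to be most demanding.
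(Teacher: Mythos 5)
First, note that the paper offers no proof of this statement: it is quoted verbatim from \cite{kac1988modular}, so your proposal can only be measured against the known arguments in the literature, not against anything in the text. Your skeleton is the right one as far as it goes: the translation of admissibility into ``$\lambda+\rho$ is regular and dominant with respect to the full-rank subsystem $\hat{\Delta}^{\lambda}$'' is correct (including the observation that $\rho(\alpha^{\vee})\in\mathbb{Z}$ for real integral roots), the Kac--Kazhdan/Deodhar--Gabber--Kac linkage step correctly confines the composition factors of $M(\lambda)$ to $\{L(w\cdot\lambda):w\in W_{\lambda}\}$, and the formal inversion $\mathrm{ch}\,M(\mu)=e^{\mu+\rho}/R$ reduces everything to showing that the coefficients in $\mathrm{ch}\,L(\lambda)=\sum_{w}a_{w}\,\mathrm{ch}\,M(w\cdot\lambda)$ are $a_{w}=\varepsilon(w)$.

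The gap is that this last step --- exactness of your hypothetical BGG complex, equivalently triviality of the relevant multiplicity data --- is asserted rather than proved, and it is the entire content of the theorem. Regular dominance of $\lambda+\rho$ with respect to $\hat{\Delta}^{\lambda}$ does \emph{not} by itself make the Kazhdan--Lusztig combinatorics trivial: in the finite-dimensional case the identity $\mathrm{ch}\,L(\lambda)=\sum_{w}\varepsilon(w)\,\mathrm{ch}\,M(w\cdot\lambda)$ for dominant regular $\lambda$ follows from KL inversion via $P_{x,w_{0}}=1$, but $W_{\lambda}$ here is an \emph{affine} Weyl group with no longest element, and the analogous statement is exactly what requires Kazhdan--Lusztig theory for symmetrizable Kac--Moody algebras (Kashiwara--Tanisaki), which is the input Kac--Wakimoto's theorem actually rests on; no elementary ``simple-reflection singular vectors generate the maximal submodule'' argument is known, and for affine Verma modules at fractional level that claim is not a formal consequence of dominance. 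Your proposed rigorization --- transporting through the isometry $\phi$ with $\phi(\hat{\Delta}^{\lambda})=\hat{\Delta}$ --- does not repair this, because $\phi$ is merely an isometry of $\mathfrak{h}^{*}$: it is not induced by any Lie algebra homomorphism, so it does not carry the $\hat{\mathfrak{g}}$-module $L(\lambda)$ to an integrable module of another affine algebra, and the restriction of $L(\lambda)$ to the integral root subalgebra $\hat{\mathfrak{g}}^{\lambda}$ is an infinite direct sum of integrable modules, not a single one. A concrete symptom that the transport cannot work: the Weyl--Kac formula for the subsystem would produce the denominator built from $\hat{\Delta}^{\lambda}_{+}$ only, whereas the formula you must prove has the full denominator $R$ over $\hat{\Delta}_{+}$ against a sum over only $W_{\lambda}$; this mismatch is precisely why the admissible character formula is not the integrable formula of a subalgebra in disguise, and why its proof is genuinely deep.
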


From now on we focus on the case of $A_{1}^{(1)}$ until stated otherwise. It is known that all admissible weights of $A_{1}^{(1)}$ are principal admissible. The level $\lambda(c)=k=\frac{t}{u}$ of an admissible weight satisfies the following condition:
\begin{align}\label{cfaal11}
k+h^{\vee}\geq \frac{h^{\vee}}{u}\quad \text{and}\quad \text{gcd}(u,h^{\vee})=\gcd(u,r^{\vee})=1,
\end{align}
with the dual Coxeter number $h^\vee=2$ and the lacety number $r^\vee=1$.
All admissible weights at level $m=\frac{t}{u}$ are given by \cite{kac1988modular}:
\begin{align*}
    P^{m}&=\{\lambda_{m,k,n}:=(m-n+k(m+2))\Lambda_0+(n-k(m+2))\Lambda_1|n,k\in \mathbb{N}, n\leq 2u+t-2,k\leq u-1.\}\\
         &=\{t_{-\frac{k\al}{2}}(\widetilde{\Lambda^0}-(u-1)(m+2)\Lambda_0\},
\end{align*}
where $\widetilde{\Lambda^0}=(u(m+2)-2-n)\Lambda_0+n\Lambda_1.$

One can write down the normalized character $\chi_{\lambda}(\tau,z,t)$ for $L(\lambda_{m,k,n})$  \cite{kac1988modular}:
\begin{align}
    \begin{split}
       \chi_{\lambda}(\tau, z, t) &=\frac{A_{\lambda+\rho}(h)}{A_{\rho}(h)}\\
                                  &=\frac{(\Theta_{a^{+},b}-\Theta_{a^-,b})(\tau,\frac{z}{u},\frac{t}{u^2})}{(\Theta_{1,2}-\Theta_{-1,2})(\tau,z,t))}\\
                                  &=\frac{(\Theta_{a^{+},b}-\Theta_{a^-,b})(\tau,\frac{z}{u},\frac{t}{u^2})}{-ie^{-4\pi i t}\vartheta_{11}(\tau,z)}
    \end{split}
\end{align}

\noindent where theta functions $\Theta_{m,n}$ are defined in Appendix and $A_\lambda$ is defined as \begin{align}\label{impfor}
A_{\lambda}(h):=\sum_{w\in W^{\lambda}}\varepsilon(w)\Theta_{w(\lambda)}(h).
\end{align}
Here $W^{\lambda}:=\{r_{\al}|\al\in {\hat{\Delta}^{\lambda}}\}$, and
\begin{align*}
    a^{+}:=u((n+1)-k(m+2)),\quad a^-:=u(-(n-1)-k(m+2)),\quad b:=u^2(m+2).
\end{align*}

The modular $S$ transformation property of $\chi_{\lambda}$ is
\begin{align}\label{modulartrans}
    \begin{split}
      \chi_{\lambda_{m,k,n}}(-\frac{1}{\tau},\f{z}{\tau},t-\f{|z|^{2}}{2\tau})=\displaystyle\sum_{\substack{0\leq k' \leq u-1\\ 0\leq n'\leq u(m+2)-2}}a^{(m)}_{(k,n),(k',n')}\chi_{\lambda_{m,k',n'}}(\tau,z,t),
    \end{split}
\end{align}
where
\begin{align}
    a_{(k,n),(k',n')}^{(m)}:=\sqrt{\frac{2}{u^2(m+2)}}e^{i\pi (k'(n+1)+k(n'+1))}e^{-i\pi k k'(m+2)}\times \sin \frac{(n+1)(n'+1)\pi}{m+2}.
\end{align}

\subsection{Twisted modular transformation}

Let $N:=\overline{\mathfrak{h}}_{\mathbb{R}}\times\overline{\mathfrak{h}}_{\mathbb{R}} \times i \mathbb{R}$ be the Heisenberg group with multiplication
\begin{align}
    (\al,\be,u)\cdot (\al',\be',u'):=(\al+\al',\be+\be',u+u'+\pi i (\< \al|\be'\>-\<\a',\be\>)).
\end{align}
One defines the action of modular group $SL_2(\Z)$ and Heisenberg group $N$ on the space $\h^*$ as follows:
\begin{align}
    \begin{split}
    \begin{pmatrix}a & b \\c & d \end{pmatrix}\cdot(\tau,z,t);=(\frac{a\tau+b}{c\tau+d},\frac{z}{c\tau+d},t-\frac{c|z|^2}{2(c\tau+d)}),
    \end{split}
\end{align}
and
\begin{align}\label{heisen}
    (\a,\be,u)\cdot h:=t_{\be}h+2\pi i \a+(u-\pi i \<\a,\be\>)\delta,
\end{align}
where $t_{\be}$ is the translation operator.

\noindent One can check that the action of Heisenberg group $N$ on $\h^*$ is an group action, i.e.,
\begin{align}
    ((\a,\be,u)(\al',\be',u'))\cdot h=(\al,\be,u)\cdot((\al',\be',u')\cdot h).
\end{align}

\begin{lem}\cite{wakimoto2001infinite}
For $(\a,\be,u)\in N$ and $h=(\tau,z,t)=2\pi i (-\tau \Lambda_0+z+t\delta)\in \h^*,$
the following holds:
\begin{align}
    (\a,\be,u)\cdot(\tau,z,t)=(\tau,z+\a-\tau\be,t+\frac{u}{2\pi i}-\frac{\<\a,\be\>}{2}+\frac{\tau}{2}|\be|^2-\<\be,z\>).
\end{align}
\end{lem}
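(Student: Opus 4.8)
The plan is to reduce the identity to a direct computation that substitutes the explicit description of the translation operator $t_\beta$ into the definition \eqref{heisen} of the Heisenberg action and then collects coefficients. Recall that for $\beta\in\overline{\mathfrak{h}}_{\mathbb{R}}$ the translation $t_\beta$ acts on a weight $\lambda\in\mathfrak{h}^*$ by the standard formula
\begin{align*}
t_\beta(\lambda)=\lambda+\lambda(c)\beta-\Big(\langle\lambda,\beta\rangle+\tfrac12|\beta|^2\lambda(c)\Big)\delta,
\end{align*}
where $c$ is the canonical central element and $\lambda(c)$ is the level. Since \eqref{heisen} expresses $(\alpha,\beta,u)\cdot h$ as $t_\beta h$ plus the two correction terms $2\pi i\,\alpha$ and $(u-\pi i\langle\alpha,\beta\rangle)\delta$, everything reduces to evaluating $t_\beta h$ and bookkeeping the remaining shifts.

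First I would record the numerical data attached to $h=2\pi i(-\tau\Lambda_0+z+t\delta)$. Because $\Lambda_0(c)=1$ while $z$ and $\delta$ annihilate $c$, the level is $h(c)=-2\pi i\tau$; and because $\beta\in\overline{\mathfrak{h}}_{\mathbb{R}}$ pairs to zero with both $\Lambda_0$ and $\delta$, the only surviving pairing is $\langle h,\beta\rangle=2\pi i\langle z,\beta\rangle$. Feeding these into the displayed formula and using $\tfrac12|\beta|^2 h(c)=-\pi i\tau|\beta|^2$ gives, after factoring out the common $2\pi i$ in the $\delta$-slot,
\begin{align*}
t_\beta(h)=2\pi i\Big(-\tau\Lambda_0+(z-\tau\beta)+\big(t-\langle z,\beta\rangle+\tfrac12\tau|\beta|^2\big)\delta\Big).
\end{align*}

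It then remains to add the two correction terms. The summand $2\pi i\,\alpha$ shifts the $\overline{\mathfrak{h}}$-component from $z-\tau\beta$ to $z+\alpha-\tau\beta$, which is precisely the asserted second coordinate. For the $\delta$-component I would combine $2\pi i\big(t-\langle z,\beta\rangle+\tfrac12\tau|\beta|^2\big)$ with $u-\pi i\langle\alpha,\beta\rangle$ and again factor out a single $2\pi i$; this turns $u$ into $u/2\pi i$ and $-\pi i\langle\alpha,\beta\rangle$ into $-\tfrac12\langle\alpha,\beta\rangle$, and using the symmetry $\langle z,\beta\rangle=\langle\beta,z\rangle$ reproduces the claimed third coordinate exactly. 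Reading off the resulting $(\tau,z,t)$-coordinates finishes the argument.

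There is no conceptual obstacle here; the only delicate point is the bookkeeping of the factors of $2\pi i$ and of the signs, especially inserting the level $h(c)=-2\pi i\tau$ with the correct sign into both the $\beta$-term and the $|\beta|^2$-term, and factoring out exactly one copy of $2\pi i$ when converting the Heisenberg cocycle $u-\pi i\langle\alpha,\beta\rangle$ into the real third coordinate.
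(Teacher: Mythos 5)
Your proof is correct and is essentially the paper's own argument: both reduce the lemma to substituting the standard translation-operator formula into the definition \eqref{heisen} and collecting the $\Lambda_0$-, $\overline{\mathfrak{h}}$-, and $\delta$-components. The only cosmetic difference is that you apply the general formula $t_\beta(\lambda)=\lambda+\lambda(c)\beta-(\langle\lambda,\beta\rangle+\tfrac12|\beta|^2\lambda(c))\delta$ to $h$ all at once, whereas the paper evaluates $t_\beta$ on $\Lambda_0$, $z$, $\delta$ separately and combines by linearity — the same computation, identically organized in substance.
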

\begin{proof}
By direct calculation, one has
\begin{align*}
  \begin{split}
     & t_{\be}(\La_0)=\La_0+\be-\frac{|\be|^2}{2}\delta,\\
     & t_{\be}(\delta)=\delta,\\
     & t_{\be}(z)=z-\<z,\be\>\delta.
  \end{split}
\end{align*}

\noindent Thus,
\begin{align*}
    t_\be(h)&=t_\be(-2\pi i\La_0+2\pi i z+2\pi i \delta t) \\
           &=-2\pi i \tau(\La_0+\be-\frac{|\be|^2}{2}\delta)+2\pi i (z-\<z,\be\>\delta)+2\pi i t(\delta)\\
           &=-2\pi i \La_0 \tau+2\pi i (z-\tau\be)+2\pi i \delta(t-\<z,\be\>+\tau\frac{|\be|^2}{2}).
\end{align*}

\noindent Furthermore,
\begin{align*}
  (\al,\be,u)\cdot h&= -2\pi i \La_0 \tau+2\pi i (z+\a-\tau\be)+2\pi i \delta(t+\frac{u}{2\pi i}-\frac{\<\a,\be\>}{2}-\<z,\be\>+\tau\frac{|\be|^2}{2}).
\end{align*}
We are done.
\end{proof}

The action of groups $SL_2(\Z)$ and $N$ on $\h^*$ is compatible:
\begin{lem}
For $(\a,\beta,u)\in N$ and $h\in \h^*$ and $\begin{pmatrix}a & b \\c & d \end{pmatrix}\in SL_2(\Z)$, the following holds:
\begin{align}
\begin{pmatrix}a & b \\c & d \end{pmatrix}\cdot (\a,\beta,u)\cdot h=(a\al+b\beta,c\a+d\beta,u)\cdot\begin{pmatrix}a & b \\c & d \end{pmatrix}\cdot h.
\end{align}
\end{lem}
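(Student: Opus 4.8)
The plan is to verify the identity by directly computing both sides as elements of $\h^*$ and checking they agree. Since both $SL_2(\Z)$ and $N$ act on the three-component object $h=(\tau,z,t)$, and the previous lemma already gives an explicit closed-form formula for the Heisenberg action $(\a,\be,u)\cdot h$, I would reduce everything to explicit formulas in the coordinates $(\tau,z,t)$ and compare component by component.

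First I would compute the left-hand side. Starting from $h=(\tau,z,t)$, apply the Heisenberg element $(\a,\be,u)$ using the previous lemma to obtain
\begin{align*}
(\a,\be,u)\cdot h=\left(\tau,\;z+\a-\tau\be,\;t+\tfrac{u}{2\pi i}-\tfrac{\<\a,\be\>}{2}+\tfrac{\tau}{2}|\be|^2-\<\be,z\>\right),
\end{align*}
and then apply $\begin{pmatrix}a & b \\c & d \end{pmatrix}$ via the defining fractional-linear action on $(\tau,z,t)$. For the right-hand side I would do the same in the opposite order: first apply $\begin{pmatrix}a & b \\c & d \end{pmatrix}$ to $h$, then apply the Heisenberg element $(a\al+b\be,\,c\a+d\be,\,u)$ using the previous lemma again. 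The $\tau$-component will match trivially since the Heisenberg action fixes $\tau$ and both sides apply the same modular transformation to it. The $z$-component reduces to the linear identity $a(z+\a-\tau\be)/(c\tau+d)$ versus $\tfrac{z}{c\tau+d}+(a\a+b\be)-\tfrac{a\tau+b}{c\tau+d}(c\a+d\be)$, which I expect to collapse using $ad-bc=1$.

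The main obstacle, and the step needing the most care, will be the $t$-component (the central/$\delta$-coordinate), because it mixes the quadratic correction terms from the two Heisenberg actions with the $|z|^2$-twist coming from the modular action, and the inner products $\<\a,\be\>$, $\<\be,z\>$, $|\be|^2$ get transformed under the modular matrix. Here I would carefully expand both sides, track the $\tfrac{u}{2\pi i}$ shift (which is invariant), and show that the remaining quadratic expressions in $\a,\be,z,\tau$ agree after substituting $a\a+b\be$ and $c\a+d\be$ and clearing the common denominator $c\tau+d$. The key algebraic inputs are bilinearity of $\<\cdot,\cdot\>$, the relation $ad-bc=1$, and the fact that $|c\a+d\be|^2$, $\<a\a+b\be,\,c\a+d\be\>$ expand into combinations that precisely reproduce the original correction terms; I would verify this is a routine but somewhat lengthy direct calculation rather than invoking any deeper structure. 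Given that both group actions on $\h^*$ have already been established as genuine actions earlier in the excerpt, no well-definedness issues remain, so the proof is purely a verification of this compatibility relation.
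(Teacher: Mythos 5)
Your overall strategy is the right one, and in fact it is the only thing to compare against yourself: the paper states this lemma \emph{without any proof} (it is a standard compatibility statement from Kac--Wakimoto theory, implicitly quoted from the same reference as the surrounding lemmas), so a direct coordinate verification of the kind you outline is exactly what is needed. I checked that your plan goes through. The $\tau$-components agree trivially; the $z$-components agree because the numerator $z+(a\alpha+b\beta)(c\tau+d)-(a\tau+b)(c\alpha+d\beta)$ collapses to $z+\alpha-\tau\beta$ using $ad-bc=1$; and for the $t$-component, after multiplying by $2(c\tau+d)$ and expanding $|z+\alpha-\tau\beta|^2$, both sides reduce to
\begin{align*}
(c\tau-d)\langle\alpha,\beta\rangle+d\tau|\beta|^2-2d\langle\beta,z\rangle-c|z|^2-c|\alpha|^2-2c\langle\alpha,z\rangle,
\end{align*}
using only bilinearity of $\langle\cdot,\cdot\rangle$ and $ad-bc=1$, exactly as you predicted; the $u/(2\pi i)$ shift passes through untouched on both sides.

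One slip to correct: in your $z$-component identity the left-hand side should be $(z+\alpha-\tau\beta)/(c\tau+d)$, not $a(z+\alpha-\tau\beta)/(c\tau+d)$ --- the modular action divides the $z$-coordinate by $c\tau+d$ with no factor of $a$. As written, with the spurious $a$, the identity you propose to verify is false; with it removed, it holds and collapses exactly as you say. Everything else is in order.
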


The metaplectic group is defined as
\[Mp_{2}(\Z):=\left\{        (A,j) \left| \begin{array}{c}   A\in SL_2(\Z),     \\
  j\;\; \text{is a holomorphic function in}\;\; \tau\in\mathbb{H}  \\
            \text{such that}\;\; j(\tau)^2=c\tau+d
\end{array}\right.\right\}.\]
%\[Mp_{2}(\Z):=\begin{Bmatrix}
%         &            A\in SL_2(\Z),     \\
%(A,j) | &  j\;\; \text{is a holomorphic function in}\;\; \tau\in\mathbb{H}  \\
%         &   \text{such that}\;\; j(\tau)^2=c\tau+d
%\end{Bmatrix}.\]

\noindent Given a holomorphic function $F$ on $Y=\mathbb{H}\times \C\times \C$, one has the right action of $Mp_2(\Z)$ and $N$ on $F$:
\begin{align*}
   & F|_{(A,j)}(\tau,z,t):=\frac{1}{j(\tau)^{\ell}}\cdot F(A\cdot(\tau,z,t)),\\
   & F|_{(\a,\be,u)}(\tau,z,t):=F((\a,\be,u)\cdot(\tau,z,t)).
\end{align*}

Then one defines very important functions for $\al,\be\in \overline{\h}^*$:
\begin{align}
    F^{\al,\be}(\tau,z,t):=F((\al,\be,0)\cdot(\tau,z,t)),
\end{align}
namely
\begin{align*}
    F^{\a,\be}(\tau,z,t)=F(\tau,z+\a-\tau\be,t-\frac{\<\a,\be\>}{2}-\<\be,z\>+\frac{\tau}{2}|\be|^2).
\end{align*}
Modular transformation of these functions are given as follows:
\begin{lem}\label{tc1}\cite{wakimoto2001infinite}
Under the action of $(A,j)\in Mp_{2}(\Z)$,
 \begin{itemize}
     \item $(F|_{(A,j)})^{\a,\be}=F^{a\a+b\be,c\a+d\be}|_{(A,j)},$
     \item $F^{\a,\be}|_{(A,j)}=(F|_{(A,j)})^{a'\a+b'\be,c'\a+d'\be},$
 \end{itemize}

 where $A^{-1}=\begin{pmatrix}a' & b' \\c' & d' \end{pmatrix}$.
\end{lem}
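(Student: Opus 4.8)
The plan is to reduce both identities to the compatibility relation
$A\cdot(\a,\be,u)\cdot h=(a\a+b\be,c\a+d\be,u)\cdot A\cdot h$
established above for the commuting actions of $SL_2(\Z)$ and $N$ on $\h^*$, together with the observation from the Wakimoto lemma that the Heisenberg action $(\a,\be,0)\cdot(\tau,z,t)$ fixes the first component $\tau$. First I would unwind the left-hand side of the first identity: writing $h=(\tau,z,t)$ and applying the definition $G^{\a,\be}(h)=G((\a,\be,0)\cdot h)$ to the function $G=F|_{(A,j)}$, one obtains
\begin{align*}
(F|_{(A,j)})^{\a,\be}(h)=(F|_{(A,j)})\big((\a,\be,0)\cdot h\big)=\frac{1}{j(\tau)^{\ell}}\,F\big(A\cdot(\a,\be,0)\cdot h\big).
\end{align*}
The crucial bookkeeping point here is that, since $(\a,\be,0)$ leaves the $\tau$-component unchanged, the automorphy factor is precisely $j(\tau)^{\ell}$ rather than $j$ evaluated at some shifted argument.

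Next I would apply the compatibility relation with $u=0$ to rewrite $A\cdot(\a,\be,0)\cdot h=(a\a+b\be,c\a+d\be,0)\cdot A\cdot h$, which gives
\begin{align*}
(F|_{(A,j)})^{\a,\be}(h)=\frac{1}{j(\tau)^{\ell}}\,F\big((a\a+b\be,c\a+d\be,0)\cdot A\cdot h\big).
\end{align*}
On the other hand, expanding the right-hand side directly from the definitions yields
$F^{a\a+b\be,c\a+d\be}|_{(A,j)}(h)=\tfrac{1}{j(\tau)^{\ell}}\,F^{a\a+b\be,c\a+d\be}(A\cdot h)=\tfrac{1}{j(\tau)^{\ell}}\,F\big((a\a+b\be,c\a+d\be,0)\cdot A\cdot h\big)$,
so the two expressions agree and the first identity follows.

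Finally, the second identity follows formally from the first by a change of variables. Setting $\binom{\a''}{\be''}=A^{-1}\binom{\a}{\be}$, so that $a\a''+b\be''=\a$ and $c\a''+d\be''=\be$, and substituting $(\a'',\be'')$ in place of $(\a,\be)$ in the first identity produces $(F|_{(A,j)})^{\a'',\be''}=F^{\a,\be}|_{(A,j)}$, which is exactly the claimed relation once $A^{-1}=\begin{pmatrix}a' & b'\\c'&d'\end{pmatrix}$ is written out. I do not anticipate a genuine obstacle, as the computation is driven entirely by the compatibility lemma; the only step requiring care is the consistent treatment of the automorphy factor $j(\tau)^{\ell}$, and this works precisely because the Heisenberg action fixes $\tau$, so both the $Mp_2(\Z)$- and the $N$-actions are computed against the same modular variable.
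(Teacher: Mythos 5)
Your proof is correct. Note that the paper itself states this lemma without proof, simply citing \cite{wakimoto2001infinite}, so there is no in-paper argument to compare against; your derivation is the natural one, reducing both identities to the compatibility relation $A\cdot(\a,\be,u)\cdot h=(a\a+b\be,c\a+d\be,u)\cdot A\cdot h$ and the coordinate formula for the Heisenberg action, and you correctly isolate the one genuinely delicate point: the automorphy factor stays $j(\tau)^{\ell}$ because $(\a,\be,0)$ fixes the $\tau$-component, which is exactly what the preceding lemma of the paper guarantees. The deduction of the second identity from the first by the substitution $\binom{\a''}{\be''}=A^{-1}\binom{\a}{\be}$ is also valid.
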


 \subsection{Twisted characters}
 Now we consider the normalized character of $L(\lambda)$ at level $k$, $\chi_\lambda(h)=q^{m_{\La}}\tr_{L(\La)}e^{2\pi i h}$ evaluated at $h=2\pi i (-\tau d+ z+ tc)$, i.e., $\chi_{\lambda}(\tau,z,t)$.
 Then given $\a,\be\in \overline{\h}^*$, one has
 \begin{align}\label{sfct}\begin{split}
          \chi_{\lambda}^{\a,\be}(\tau,z,t)&=q^{m_{\lambda}}\tr_{L(\lambda)}e^{-2\pi i \tau d+2\pi i (z+\a-\tau\be)+2\pi i k(t-\frac{\<\a,\be\>}{2}-\<z,\be\>+\tau\frac{|\be|^2}{2})}\\
                               &=e^{2\pi i kt}\tr_{L(\lambda)}                                                           e^{2\pi i ((z+\a)+k(-\frac{\<\a,\be\>}{2}-\<z,\be\>))}q^{-\be+k\frac{|\be|^2}{2}}q^{L_{(0)}-\frac{k}{24}} \\
                               &=e^{2\pi i kt}\tr_{L(\lambda)}                                                           e^{2\pi i (z+\a-k\frac{\<\a,\be\>}{2}-k\<z,\be\>))}q^{L_{(0)}-\be+k\frac{|\be|^2}{2}-\frac{k}{24}}
 \end{split}
 \end{align}
The twisted character (\ref{sfct}) is very important. When $z=0$, multiplying (\ref{sfct}) with $\eta(\tau)^k$ produces preciously the theta function defined on vertex algebra by Miyamoto \cite{miyamoto2000modular}.
When $\a=0$, (\ref{sfct}) can be written as
\begin{align}\label{tc2}
    \begin{split}
        \chi_{\lambda}^{0,\be}(\tau,z,t)&=(\mathbf{y}e^{-2\pi i \<z,\be\>}q^{\frac{|\be|^2}{2}})^k\tr_{L(\La)} e^{2\pi i z}q^{-\be}                                                          q^{L_{(0)}-\frac{k}{24}}
    \end{split}
\end{align}
where $\mathbf{y}:=e^{2\pi i t}.$

\subsection{Twisted modules}\label{twm}
Let $g$ be the automorphism of $V$ of finite order $T$. We first recall the definition of $g$-twisted module:
\begin{dfn}
An (ordinary) $g$-twisted $V$-module is a $\mathbb{C}$-linear space $M$ equipped with a linear map \begin{align*}
    & V\rightarrow {\rm End}(V)[[z^{\frac{1}{T}},z^{-\frac{1}{T}}]],\\
    & v\mapsto Y_{M}(v,z)=\displaystyle \sum_{n\in \mathbb{Q}}v_{(n)}z^{-n-1}
\end{align*} satisfying
\begin{itemize}
    \item For $v,w\in M$, $v_{(m)}w=0$ if $m$ is large enough.
    \item $Y_{M}(\mathbf{1},z)=id_{V}.$
    \item For $v\in V^{r}=\left\{v\in V|gv=e^{\frac{2\pi ir}{T}}v\right\}$, and $0\leq r\leq T-1$ \[Y_{M}(v,z)=\displaystyle\sum_{n\in \frac{r}{T}+\mathbb{Z}} v_{(n)}z^{-n-1}.\]
    \item\textbf{(Jacobi identity)} For $u\in V^{r}$
    \begin{align*}&z_{0}^{-1}\delta(\frac{z_{1}-z_{2}}{z_{0}})Y_{M}(v,z_{1})Y_{M}(w,z_{2})-(-1)^{|v||w|}z_{0}^{-1}\delta(\frac{z_{0}-z_{1}}{-z_{0}})Y_{M}(v,z_{2})Y_{M}(u,z_{1})\\&=z_{2}^{-1}(\frac{z_{1}-z_{0}}{z_{2}})^{-\frac{r}{T}}\delta(\frac{z_{1}-z_{0}}{z_{2}})Y_{M}(Y(u,z_{0})v,z_{2}).\end{align*}
    \item\textbf{(grading)} \begin{itemize}
        \item $M=\oplus_{\lambda\in\mathbb{C}}M_{\lambda},$ where $M_{\lambda}=\left\{w\in M|L_{(0)}w=\lambda w\right\}$,
        \item $M_{\lambda}$ is finite dimensional,
        \item for a fixed $\lambda$, $M_{\frac{n}{T}+\lambda}=0$ for all small enough integers $n$.
    \end{itemize}
\end{itemize}
\end{dfn}
\begin{rem}
If the grading condition is dropped, one calls it the weak $g$-twisted module.
\end{rem}

 Now we review some basic facts about twisted modules under the vertex algebra setting. Let $(V,\omega)$ be a $\Z$-graded conformal vertex algebra. Let $v\in V_1$ be an even vector satisfying the Heisenberg $\lambda$-bracket relation
\begin{align}
    \begin{split}
        [v_{\lambda}v]=k\lambda, \quad [\omega_{\lambda}v]=(T+\lambda)v.
    \end{split}
\end{align}
Suppose further that $v_{(0)}$ acts semisimply on $V$ such that the eigenvalues of $v_{(0)}$ belong to $\frac1T\Z$.  Li's $\Delta$-operator is defined as
\begin{align}
    \Delta(z):=z^{v_{(0)}}\exp(\sum_{n=1}^{\infty}\frac{v_{(n)}}{-n}(-z)^{-n}).
\end{align}
 When $g(v)=v$, one can obtain an $ge^{2\pi i v_{(0)}}$-twisted module from a $g$-twisted $V$-module $M$ by using Li's $\Delta$-operator % {\color{red}Not define $g$-twisted module?}
 as the following

\begin{prop}\cite{li1996local}\label{li1996local}
 $(M,Y_{M}(\Delta(z)\cdot,z))$ is a weak $ge^{2\pi i v_{(0)}}$-twisted module.
\end{prop}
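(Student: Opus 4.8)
The plan is to verify directly that the map $Y_M^\Delta(v,z) := Y_M(\Delta(z)v,z)$ satisfies the axioms of a weak $ge^{2\pi i v_{(0)}}$-twisted module, following Li's original strategy in \cite{li1996local}. The conceptual core is that conjugation by the operator-valued series $\Delta(z)$ implements a \emph{change of local coordinate / gauge transformation} that shifts the monodromy of the fields by exactly the factor $e^{2\pi i v_{(0)}}$, while preserving the Jacobi identity. First I would record the two structural identities that make $\Delta(z)$ work, namely the commutation relation governing how $\Delta(z)$ interacts with the vertex operator $Y_M(u,z_1)$ for a homogeneous $u$, and the associativity-type identity
\begin{align*}
\Delta(z_2)\,Y(u,z_0)\,v = Y\bigl(\Delta(z_0+z_2)u,\,z_0\bigr)\,\Delta(z_2)\,v,
\end{align*}
which is the precise statement that $\Delta(z)$ is compatible with the multiplication. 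These follow from the defining Heisenberg relations $[v_\lambda v]=k\lambda$ and $[\omega_\lambda v]=(T+\lambda)v$ together with the explicit exponential form of $\Delta(z)$; they are the technical lemmas (the analogues of Li's Propositions~2.1--2.3) that I would cite or reprove.

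Next I would check the axioms one by one. The lower-truncation (weak truncation) condition $v_{(m)}w=0$ for $m\gg 0$ survives because $\Delta(z)$ only involves finitely many negative modes $v_{(n)}$ with $n\geq 1$ acting in a locally finite way, so applying it to a fixed $w$ produces only finitely many new terms and does not destroy truncation. The vacuum axiom $Y_M^\Delta(\mathbf 1,z)=\mathrm{id}$ follows since $\Delta(z)\mathbf 1=\mathbf 1$ (every summand in the exponential annihilates the vacuum, and $z^{v_{(0)}}\mathbf 1=\mathbf 1$). The crucial monodromy/grading computation is the $T$-eigenvalue bookkeeping: the factor $z^{v_{(0)}}$ conjugates a field of original monodromy $g$ to one whose field expansion $Y_M^\Delta(u,z)$ now runs over $z$-powers shifted by the $v_{(0)}$-eigenvalue of $u$, and since these eigenvalues lie in $\tfrac1T\Z$ this produces exactly the $e^{2\pi i v_{(0)}}$-twist; combined with the original $g$-action this gives the $ge^{2\pi i v_{(0)}}$-grading required by the twisted-module definition.

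The main obstacle, and the step I would spend the most care on, is the twisted Jacobi identity. The strategy is to start from the Jacobi identity for the original $g$-twisted module $M$, substitute $\Delta(z_1)u$, $\Delta(z_2)v$ in the appropriate slots, and then use the associativity identity above to move the $\Delta$-factors past the vertex operators so that everything reassembles into the twisted Jacobi identity for $Y_M^\Delta$ with the corrected monodromy exponent $-\tfrac{r}{T}$ replaced by the new one coming from the $v_{(0)}$-shift. The subtlety is that the formal-variable monodromy factor $\bigl(\tfrac{z_1-z_0}{z_2}\bigr)^{-r/T}$ must transform correctly under the coordinate change induced by $\Delta$; one has to track the binomial expansion conventions and verify that the $z^{v_{(0)}}$ prefactors combine to supply precisely the extra fractional exponent, using the delta-function identity $z_0^{-1}\delta\!\bigl(\tfrac{z_1-z_2}{z_0}\bigr)$ to transfer powers between arguments. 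Once the identity $\Delta(z_0+z_2)=\Delta(z_2)\,(\text{correction})$ is expanded and matched against the delta-function supports, the Jacobi identity for $Y_M^\Delta$ drops out. I would therefore present the proof as: (i) state the two $\Delta$-identities, (ii) dispatch the truncation, vacuum, and grading axioms quickly, and (iii) give the Jacobi-identity calculation in full, as that is where all the real content lies.
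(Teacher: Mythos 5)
The paper contains no proof of this proposition: it is quoted directly from Li's work \cite{li1996local}, which is precisely the source of the argument you outline. Your sketch faithfully reconstructs Li's original proof --- the conjugation identity $\Delta(z_2)Y(u,z_0)\Delta(z_2)^{-1}=Y(\Delta(z_0+z_2)u,z_0)$ as the technical core, the quick verification of truncation, vacuum and grading, and the delta-function bookkeeping showing that the $z^{v_{(0)}}$ prefactor converts the $g$-monodromy exponent into the $ge^{2\pi i v_{(0)}}$-monodromy exponent --- so the approach matches the cited source. One terminological slip: the modes $v_{(n)}$ with $n\geq 1$ appearing in the exponential of $\Delta(z)$ are annihilation (positive) modes, not ``negative modes''; this does not affect your truncation argument, since applying $\Delta(z)$ to a fixed vector still produces only finitely many terms.
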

\noindent When $g=\text{id}$, $e^{2\pi i v_{(0)}}$ is an automorphism of $V$ of order $T$. Then $(M,Y_{M}(\Delta(z)\cdot,z))$ is a $e^{2\pi i v_{(0)}}$-twisted module.  Let $v'\in V_1$ satisfy $v_{(0)}v'=0$. Define
\begin{align}
\begin{split}
   & \hat{L}_{(0)}:=\text{Res}_z zY_{M}(\Delta(z)\omega,z), \\
   & \hat{v'}_{(0)}:=\text{Res}_z Y_{M}(\Delta(z)v',z).
 \end{split}
\end{align}

\noindent By direct calculation, one has
\begin{align*}
  \hat{L}_{(0)}=L_{(0)}+v_{(0)}+\frac12\<v,v\>,\quad \hat{v'}_{(0)}=v'_{(0)}+\<v,v'\>.
\end{align*}
The nomalized character of weak $V$-module $M$ is defined as
\begin{align}
    \text{ch}(M)(q,z)=\tr_{M}e^{2\pi i z v'_{(0)}}q^{L_{(0)}-\frac{c}{24}}.
\end{align}
Then the normalized character of weak $e^{2\pi i v_{(0)}}$-twisted module $\hat{M}=(M,Y_M(\Delta(z)\cdot,z))$ is
\begin{align}\label{sfct2}
  \text{ch}(\hat{M})(q,z)=\tr_{M}e^{2\pi i z (v'_{(0)}+k\<v,v'\>)}q^{L_{(0)}+v_{(0)}+\frac12k\<v,v\>-\frac{c}{24}}.
\end{align}
\begin{rem}
 In the affine vertex algebra case, let $M=L(\La)$, $v'=z$ and $v=-\be$. One has
 \[\mathrm{ch}(\hat{M})(q,z)=\chi_{\La}^{0,-\be}(\tau,z,0),\] where $q=e^{2\pi i \tau}.$ It is also worth noting that the central charge of the conformal element $\omega+L_{(-1)}v$  is $c-12\<v,v\>$. Also $(\omega+L_{(-1)}v)_{(0)}=L_{(0)}+v_{(0)}$. Thus, \begin{align}\label{sfct3}\tr_{M}e^{2\pi i z v'_{(0)}}q^{L_{(0)}+v_{(0)}+\frac12k\<v,v\>-\frac{c}{24}}\end{align} can be understood as the trace function on $M$ with respect to the new conformal vector, $\omega+L_{(-1)}v$. Careful readers may notice the difference between (\ref{sfct2}) and (\ref{sfct3}); this is because the first one changes the module structure and the second one changes the conformal structure.
\end{rem}

\subsection{Spectral Flow Automorphsim}
Let us give a brief introduction to the spectral flow automorphisms  (see \cite{Ridout:2008nh,Creutzig:2012sd,Creutzig:2013yca} for details). Let  $\hat{\mathfrak{g}}$
be the (non-twisted) affine Lie algebra associated with the semisimple complex finite  Lie algebra
($\mathfrak{g}$,$(~|~))$. Let $\Delta$ be the set of roots of $\mathfrak{g}$. Let $\alpha\in \Delta$ denote a root of $\mathfrak{g}$ with root vector $e^{\alpha}$ and coroot $\alpha^{\vee}$, and let $W$ be the Weyl group of finite Lie algebra $\mathfrak{g}$.  Each element $w\in W$ permutes the roots and  induces an automorphism of $\mathfrak{g}$ via $w(e^{\alpha})=e^{w(\alpha)}$, this action can be generalized to an affine Lie algebra $\hat{\mathfrak{g}}$ as follows, the corresponding root vector of real roots are $e^{\alpha}_{n}$, and the roots vector corresponding to imaginary roots are denoted by $h^{i}_{n}$, one can associate $h^{i}$ with the simple coroot $\alpha^{\vee}_{i}$ of $\mathfrak{g}$. Let $W\subset GL(\mathfrak{h}^{*})$ be the Weyl group of $\mathfrak{g}$
generated by the reflections $s_{\alpha}$
with $\alpha\in \hat{\Delta}^{\text{re}}$,
where
$s_{\alpha}(\lambda)=\lambda-\langle \lambda,\alpha^{\vee}\rangle\alpha$
for $\lambda\in\mathfrak{h}^{*}$.
Let the affine Weyl group be
$\widehat{W}=W\ltimes \bar{Q}^{\vee}$. The coroot lattice acts on the roots of $\hat{\mathfrak{g}}$ by translation in the imaginary direction. Then each simple coroot $\alpha^{\vee}_{i}$ of $\mathfrak{g}$ defines an independent transformation $\tau_{i}$ on the root vector of an affine Lie algebra $\hat{\mathfrak{g}}$ via,
\begin{align}
\tau_{i}(e_{n}^{\alpha})=e^{\alpha}_{n-\langle\alpha,\alpha^{\vee}_{i}\rangle} \quad (n\in\mathbb{Z}), \qquad \tau_{i}(h^{j}_{n})=h^{j}_{n} \quad (n\neq0)
\end{align}
We finally obtain the spectral flow automorphisms $\tau_{i}$ act on generators of the affine Lie algebra
$\hat{\mathfrak{g}}$ and Virasoro element $L_{0}$ as follows,
\begin{align}
	\tau_{i}(e^{\alpha}_{n})&=e^{\alpha}_{n-\langle\alpha,\alpha^{\vee}_{i}\rangle}, \quad \tau_{i}(h^{j}_{n})=h^{j}_{n}-(\alpha^{\vee}_{i}|\alpha^{\vee}_{j})K\\
	\tau_{i}(K)&=K, \quad \tau_{i}(L_{0})=L_{0}-h^{i}_{0}+\frac{2}{(\alpha_{i}|\alpha_{i})}K
\end{align}
For the $\hat{\mathfrak{g}}=A^{(1)}_{1}$,
the spectral flow automorphism $\sigma$ which may be regarded as a square root of the affine Weyl translation by the simple coroot $\alpha^{\vee}_{i}$ of finite Lie algebra $\mathfrak{g}$, the powers of $\sigma$ acts as follows,
\begin{align}
\sigma^{\ell}(e_{n})&=e_{n-\ell}, \quad \sigma^{\ell}(h_{n})=h_{n}-\ell\delta_{n,0}K, \quad \sigma^{\ell}(f_{n})=f_{n+\ell}\\
\sigma^{\ell}(K)&=K, \quad \sigma^{\ell}(L_{0})=L_{0}-\frac{1}{2}\ell h_{0}+\frac{1}{4}\ell^{2}K
\end{align}
One can use spectral flow automorphsim to modify the action of $A^{(1)}_{1}$ on any module $M$, thereby obtaining new modules $\sigma^{*}(M)$. Explicitly, the modified algebra action defining these new modules is given by,
\begin{equation}
	X\cdot \sigma^{*}|v\rangle=\sigma^{*}(\sigma^{-1}(X)|v\rangle), \quad (X\in \mathfrak{sl}_{2}). \end{equation}
For example, if $|\lambda,\Delta\rangle$ is a vector of weight $\lambda$ and conformal dimension $\Delta$, then the vector $(\sigma^{\ell})^{*}|\lambda,\Delta\rangle\in (\sigma^{\ell})^{*}(M)$, the weight and conformal dimension of this vector becomes,
\begin{align}\label{spectral1}
	h_{0}(\sigma^{\ell})^{*}|\lambda,\Delta\rangle=(\lambda+\ell K)(\sigma^{\ell})^{*}|\lambda,\Delta\rangle
\end{align}
\begin{align}\label{spectral2}
L_{0}(\sigma^{\ell})^{*}|\lambda,\Delta\rangle=(L_{0}+\frac{1}{2}\ell h_{0}+\frac{1}{4}\ell^{2}K)(\sigma^{\ell})^{*}|\lambda,\Delta\rangle
\end{align}
(From now on, we denote the new module as $\sigma^{\ell}(M)$). In order to obtain the character of new module $\sigma^{\ell}(M)$, we need to compute the character of module $M$. The normalized character of irreducible highest weight $\hat{\mathfrak{g}}$ module $M$ at level $k$ is defined as,
\begin{equation}
\text{ch}[M](\mathbf{y},\mathbf{z},q)=\text{tr}_{M}\,\mathbf{y}^{k}\mathbf{z}^{h_{0}}q^{L_{0}-c/24}
\end{equation}
where $\mathbf{y}=e^{2\pi i t}$, $\mathbf{z}=e^{2\pi i z}$ and $q=e^{2\pi i \tau}$.

The character of new module $\sigma^{\ell}(M)$ can be written in terms of the character of module $M$ as follows,
\begin{equation}\label{twitransf}
\text{ch}[\sigma^{\ell}(M)](\mathbf{y},\mathbf{z},q)=\text{ch}[M]\left(\mathbf{y}\mathbf{z}^{\ell}q^{\ell^{2}/4},\mathbf{z}q^{\ell/2},q\right),
\end{equation}
One can check that the character of module $M$ satisfies the following relation,
\begin{equation}
\text{ch}[\sigma^{\ell+\ell'}(M)](\mathbf{y},\mathbf{z},q)=\text{ch}[\sigma^{\ell}\circ\sigma^{\ell'}(M)](\mathbf{y},\mathbf{z},q)=\text{ch}[M]\left(\mathbf{y}\mathbf{z}^{\ell+\ell'}q^{(\ell+\ell')^{2}/4},\mathbf{z}q^{(\ell+\ell')/2},q\right).
\end{equation}
\subsection{Li's delta operator, spectral flow and twisted modules}
Let $M$ be a highest weight $\hat{\mathfrak{g}}$-module or its contragredient module. 
Let $v\in \mathfrak{h}_{\mathbb{R}}^*$. We have
\begin{align*}
   & \Delta(z) e_{-1}^{\al}\mathbf{1}=z^{\la v,\al \ra}e_{-1}\mathbf{1}, \\
   & \Delta(z) h_{-1}^j\mathbf{1}=h_{-1}^j\mathbf{1}+\la v,h^{j} \ra kz^{-1}\mathbf{1}
\end{align*}  
Thus, it induces the spectral flow $\tau$:
\begin{align*}
   & \tau(e_{n}^{\al})=e_{n+\la v, \al \ra}^\al \\
   & \tau(h_n^{j})=h_n^{j}+\la v,h^{j} \ra k \delta_{n,0}.
\end{align*}
If $\la v, \al \ra\in \frac{1}{T}\mathbb{Z}$ for all roots $\al$, according to Proposition \ref{li1996local}, $(M,Y_{M}(\Delta(z)\cdot,z))$ is a weak $e^{2\pi i v_{(0)}}$ -twisted ($\mathbb{Z}_{T}$-twisted) module. But the $\Z_{T}$-twisted modules are not necessarily coming from spectral flow.

In the future, we shall call $(M,Y_{M}(\Delta(z)\cdot,z))$ the spectral flowed module or $e^{2\pi i v_{(0)}}$-twisted module. 
\begin{prop}\label{admissible}
	For $A^{(1)}_{1}$ at admissible  level $k=-2+\frac{v}{u}$, let $\lambda(i,j,v,u)$ be the Dynkin label for the finite Lie algebra  $\mathfrak{sl}_{2}$ of an irreducible highest weight module and $\Delta(i,j,v,u)$ its conformal weight determined by the Sugawara construction. They  become $\sigma^{-\frac{1}{2}}(\lambda(i,j,v,u))$ and $\sigma^{-\frac{1}{2}}(\Delta(i,j,v,u))$ after taking $\ell=-\frac{1}{2}$ spectral flow action on $\lambda(i,j,v,u)$ and $\Delta(i,j,v,u)$ as follows,
	\begin{align*}
		&\sigma^{-\frac{1}{2}}(\lambda(i,j,v,u))=i-1-(k+2)s-\frac{k}{2}, \quad (i=1,\cdots,v-1, j=0,\cdots, u-1) \\ 
		&\sigma^{-\frac{1}{2}}(\Delta(i,j,v,u))=\frac{1}{16}\left(4+k-4i+4(2+k)j+\frac{4(-1+(i-(2+k)j)^{2})}{2+k}\right), \quad (i=1,\cdots,v-1, j=0,\cdots, u-1) \\ 
	\end{align*}
\end{prop}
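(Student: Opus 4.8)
The plan is to read off the pair $\big(\lambda(i,j,v,u),\Delta(i,j,v,u)\big)$ from the Kac--Wakimoto classification of admissible weights recalled in Section~\ref{section2}, and then simply to specialize the spectral-flow eigenvalue shifts \eqref{spectral1}--\eqref{spectral2} to $\ell=-\tfrac12$. First I would fix the parametrization: writing the level as $k=-2+\tfrac{v}{u}$ so that $k+2=\tfrac{v}{u}$, the admissible highest weights are indexed by $1\le i\le v-1$ and $0\le j\le u-1$, and the Dynkin label of the corresponding irreducible module is
\[
\lambda(i,j,v,u)=i-1-(k+2)j ,
\]
which is the specialization of the set $P^{m}$ of Section~\ref{section2} with $r=i-1$ and $s=j$. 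The Sugawara construction then supplies the conformal weight via the standard $\mathfrak{sl}_2$ expression $\Delta=\tfrac{C_2(\lambda)}{2(k+h^\vee)}$ with $h^\vee=2$ and Casimir eigenvalue $C_2(\lambda)=\tfrac12\lambda(\lambda+2)$, giving
\[
\Delta(i,j,v,u)=\frac{\lambda(\lambda+2)}{4(k+2)} .
\]

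Next I would apply the spectral flow. Although \eqref{spectral1}--\eqref{spectral2} are stated for integer powers of $\sigma$, the half-integer value $\ell=-\tfrac12$ is admissible here: by the discussion of Section~\ref{twm}, the relevant vector $v$ satisfies $\langle v,\alpha\rangle\in\tfrac1T\mathbb{Z}$ with $T=2$, so Li's $\Delta$-operator (Proposition~\ref{li1996local}) produces a genuine $\mathbb{Z}_2$-twisted module and the same eigenvalue shifts apply formally. Setting $K=k$ and $\ell=-\tfrac12$ in \eqref{spectral1}--\eqref{spectral2} yields
\[
\sigma^{-1/2}(\lambda)=\lambda+\ell k=\lambda-\tfrac{k}{2},
\qquad
\sigma^{-1/2}(\Delta)=\Delta+\tfrac12\ell\lambda+\tfrac14\ell^{2}k=\Delta-\tfrac{\lambda}{4}+\tfrac{k}{16}.
\]
Substituting $\lambda=i-1-(k+2)j$ into the first identity immediately gives the stated Dynkin label $i-1-(k+2)j-\tfrac{k}{2}$.

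For the conformal weight I would introduce the shorthand $w:=i-(2+k)j$, so that $\lambda=w-1$ and $\lambda(\lambda+2)=w^{2}-1$, whence $\Delta=\tfrac{w^{2}-1}{4(k+2)}$. Plugging into $\sigma^{-1/2}(\Delta)=\Delta-\tfrac{\lambda}{4}+\tfrac{k}{16}$ and clearing to the common factor $\tfrac1{16}$ gives
\[
\sigma^{-1/2}(\Delta)=\frac{1}{16}\left(\frac{4(w^{2}-1)}{k+2}-4(w-1)+k\right)
=\frac{1}{16}\left(4+k-4i+4(2+k)j+\frac{4\big(-1+(i-(2+k)j)^{2}\big)}{2+k}\right),
\]
which is exactly the asserted expression after unfolding $-4(w-1)=-4i+4(2+k)j+4$. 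Thus the proposition is, in essence, a bookkeeping exercise: record the Kac--Wakimoto data, apply the two shift formulas, and simplify. The only genuinely delicate point — and the step I would treat as the main obstacle — is justifying that the integer spectral-flow formulas \eqref{spectral1}--\eqref{spectral2} transfer verbatim to the half-integer value $\ell=-\tfrac12$ through the twisted-module interpretation of Section~\ref{twm}, rather than requiring a separate derivation; everything after that is routine algebra.
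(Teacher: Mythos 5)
Your proposal is correct and follows essentially the same route as the paper: the paper's proof simply cites the eigenvalue-shift formulas (\ref{spectral1})--(\ref{spectral2}) together with the classification of admissible $L_k(\mathfrak{sl}_2)$-weights (Theorem 3.5.3 of Adamovi\'c--Milas, equivalent to the Kac--Wakimoto data you invoke), and your computation just makes explicit the algebra that the paper leaves to the reader. The half-integer flow issue you flag is handled the same way in the paper, via the Li $\Delta$-operator discussion of Section 2.6 and the remark at the start of Section 2.7 that the integer-flow formulas persist for half-integer $\ell$.
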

\begin{proof}
	The statement follows from (\ref{spectral1}), (\ref{spectral2}) and theorem (3.5.3) in \cite{adamovic1995vertex}.
\end{proof}
One can compute the $\mathbb{Z}_{2}$-twisted Zhu's algebra $A_{\sigma}(L_{k}(\mathfrak{sl}_{2}))$ at  admissible levels $k=-2+\frac{v}{u}$ (The proof will be given in Proposition \ref{isozhu}), and find that all irreducible $\mathbb{Z}_{2}$-twisted modules of $L_{k}(\mathfrak{sl}_{2})$ from $\ell=-\frac{1}{2}$ spectral flow on the untwisted modules in category $\mathcal{O}$.
\subsection{Half-integer spectral flow for $A_{1}^{(1)}$ at boundary admissible level}\label{bal1}
For $A_{1}^{(1)}$,  $h^{\vee}=2$,  the boundary admissible levels are $k=-2+\frac{2}{u}$, where $u=2n+1$ is a positive odd integer. All admissible weights are,
\begin{equation}
\Lambda_{k,j}:=t_{-\frac{j}{2}}\cdot(k\Lambda_{0})=\left(k+\frac{2j}{u}\right)\Lambda_{0}-\frac{2j}{u}\Lambda_{1}, \quad j=0,1,\cdots,u-1,
\end{equation}
where $\Lambda_{0}$ and $\Lambda_{1}$ are fundamental weights of affine Lie algebra $A^{(1)}_{1}$. All  characters of irreducible modules can be written in terms of Jacobi theta function $\theta_{1}(\mathbf{z};q)$ as follows,
\begin{equation}	\text{ch}[L(\Lambda_{k,j})](\mathbf{y},\mathbf{z},q)=\mathbf{y}^{k}\mathbf{z}^{-\frac{2j}{u}}q^{\frac{j^{2}}{2u}}\frac{\theta_{1}(\mathbf{z}^{2}q^{-j};q^{u})}{\theta_{1}(\mathbf{z}^{2};q)}, \quad (j=0,1,\dots,u-1)
\end{equation}
We should emphasize that the vaccum character $(j=0)$ of these VOAs denoted by  $
L_{\frac{-4n}{2n+1}}(\mathfrak{sl}_{2})$ coincide with superconformal indices of the $4d$ supersymmetric gauge theories which called $(A_{1},D_{2n+1})$ theories. Especially,
when $u=3$,  level $k=-4/3 $, the  $L_{-\frac{4}{3}}(\mathfrak{sl}_{2})$ coincides with the $\mathfrak{a}_{1}$ DC series of simple Lie algebras.

One can generalize the integer flow parameter $\ell$ to half integer, (\ref{twitransf}) stays the same.
For the $A_{1}^{(1)}$ at boundary admissible levels $k=-2+\frac{2}{u}$, using (\ref{twitransf}), all characters of these twisted modules take the following form,
\begin{equation}
	\text{ch}[\sigma^{-\frac{1}{2}}({L({\Lambda_{k,j}}}))](\mathbf{y},\mathbf{z},q)=(\mathbf{y} \mathbf{z}^{-\frac{1}{2}}q^{\frac{1}{16}})^{k}(\mathbf{z} q^{-\frac{1}{4}})^{-\frac{2j}{u}}q^{\frac{j^{2}}{2u}}\frac{\theta_{1}(\mathbf{z}^{2}q^{-j-\frac{1}{2}};q^{u})}{\theta_{1}(\mathbf{z}^{2}q^{-\frac{1}{2}};q)}, \quad (j=0,1,\dots,u-1)
\end{equation}
\label{prop2.7}
Now, we use $\lambda(j,u)$ instead of  $\lambda(1,j,2,u)$ (Similar to $\Delta(j,u)$), and get,
\begin{align*}
\sigma^{-\frac{1}{2}}(\lambda(j,u))=\frac{u-2j-1}{u}, \quad \sigma^{-\frac{1}{2}}(\Delta(j,u))=\frac{1+4j(1+j-u)-u}{8u}, \quad (j=0,1,\dots u-1)
\end{align*}
In particular, we observe that the $\sigma^{-\frac{1}{2}}(\lambda(j,u))$ and $\sigma^{-\frac{1}{2}}(\Delta(j,u))$ satisfy the following relations respectively,
\begin{equation}
	\sigma^{-\frac{1}{2}}(\lambda(j,u))=-\sigma^{-\frac{1}{2}}(\lambda(u-j-1,u)), \quad \sigma^{-\frac{1}{2}}(\Delta(j,u))=\sigma^{-\frac{1}{2}}(\Delta(u-1-j,u))
\end{equation}
Fix a boundary admissible level $k$, all $\sigma^{-\frac{1}{2}}({L({\Lambda_{k,j}}}))$ are ordinary modules. Then the normalized characters satisfy a modular linear differential equation (Section \ref{section5}). 
%%The $\mathbb{Z}_{2}$-twisted Zhu's algebra $A_{\sigma}(L_{k}(\mathfrak{sl}_{2}))$ at boundary admissible levels $k=-2+\frac{2}{u}$ becomes
%%\begin{equation}%%
%%A_{\hat{\sigma}}(L_{k}(\mathfrak{sl}_{2}))\cong \mathbb{C}[h]/\langle I \rangle, \qquad I=h\prod_{i=1}^{\frac{u-1}{2}}\left(h^{2}-\frac{(u+1-2i)^{2}}{u^{2}}\right)%%
%%\end{equation}%%
%%where $h$ is the Cartan element of finite Lie algebra $\mathfrak{sl}_{2}$. %% 
We shall use the fact that $A_{\hat{\sigma}}(L_{k}(\mathfrak{sl}_{2}))$ is semi-simple to prove semisimplicity of $\mathbb{Z}_{2}$-twisted modules of $L_{k}(\mathfrak{sl}_{2})$.
\begin{lem}
Let $\sigma^{-\f12}(L(\Delta_{1},\lambda_{1}))$ and $\sigma^{-\f12}(L(\Delta_{2},\lambda_{2}))$ be irreducible $\Z_2$-twisted modules of $L_{k}(\mathfrak{sl}_{2})$.  Suppose that there is a nontrivial extension of
\begin{align*}
0\longrightarrow \sigma^{-\f12}(L(\Delta_{1},\lambda_{1}))\overset{\iota}{\longrightarrow} M\overset{\pi}{\longrightarrow} \sigma^{-\f12}(L(\Delta_{2},\lambda_{2}))\longrightarrow 0
\end{align*}
Then $L_0$ acts locally finitely on $M$.
\begin{proof}
Since the Zhu's algebra $A_{\hat{\sigma}}(L_{k}(\mathfrak{sl}_{2}))\cong \mathbb{C}[h]/\langle I \rangle$ is a commutative algebra and semisimple. According to characters  $\text{ch}[\sigma^{-\frac{1}{2}}({L(\Lambda_{k,j}){}})]$ of all twisted modules. We have $\sigma^{-\f12}(L(\Delta_{1},\lambda_{1}))$ and $\sigma^{-\f12}(L(\Delta_{2},\lambda_{2}))$ are $L_{0}$-diagonalizable and the $L_{0}$-finite dimensional. The proof is similar to \cite{Fasquel:2020iqf} (Lemma 7.3).  Finally, we have that, $m\in M$ belongs to some $L_0$-stable finite dimensional vector subspace of
\begin{align*}
\bigoplus_{i=1}^r\ker(L_0-\nu_i Id)\oplus\bigoplus_{j=1}^s\ker(L_0-\mu_j Id).
\end{align*}
where $\nu_{1},\cdots,\nu_{r}$ and $\mu_{1},\cdots,\mu_{s}$ are eigenvalues of some eigenvectors $v_{1},\cdots,v_{r}\in \sigma^{-\f12}(L(\Delta_{1},\lambda_{1}))$ and $w_{1},\cdots,w_{s}\in \sigma^{-\f12}(L(\Delta_{2},\lambda_{2}))$ respectively, then the assertion follows.
\end{proof}
\end{lem}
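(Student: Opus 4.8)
The plan is to exploit the hypothesis that both the sub and the quotient in the extension are \emph{ordinary} twisted modules. Write $V_1=\sigma^{-\f12}(L(\Delta_1,\lambda_1))$ and $V_2=\sigma^{-\f12}(L(\Delta_2,\lambda_2))$. First I would record the only input needed about the irreducible constituents: because the twisted Zhu algebra $A_{\hat\sigma}(L_k(\s_2))\cong\C[h]/\la I\ra$ is commutative and semisimple, and because the explicit characters $\text{ch}[\sigma^{-\f12}(L(\Lambda_{k,j}))]$ exhibit finite-dimensional graded pieces bounded below, each of $V_1$ and $V_2$ is $L_0$-diagonalizable with finite-dimensional $L_0$-eigenspaces. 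In particular every element of $V_1$ or $V_2$ is a \emph{finite} sum of $L_0$-eigenvectors.

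The core of the argument is a lifting step. Given an arbitrary $m\in M$, I apply the surjection $\pi\colon M\to V_2$ and write $\pi(m)=\sum_{j}w_j$ as a finite sum of $L_0$-eigenvectors, $L_0w_j=\mu_jw_j$. For each $w_j$ I would produce a \emph{generalized} $L_0$-eigenvector $\tilde w_j\in M$ lying above it: pick any lift $u_j$ with $\pi(u_j)=w_j$; then $(L_0-\mu_j)u_j\in\ker\pi=V_1$, so by semisimplicity of $V_1$ it is a finite sum $\sum_ic_iv_i$ of $L_0$-eigenvectors with $L_0v_i=\nu_iv_i$. Subtracting $x_j:=\sum_{\nu_i\neq\mu_j}\tfrac{c_i}{\nu_i-\mu_j}v_i\in V_1$ and setting $\tilde w_j:=u_j-x_j$, a direct computation gives $(L_0-\mu_j)\tilde w_j\in(V_1)_{\mu_j}$, whence $(L_0-\mu_j)^2\tilde w_j=0$. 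Thus each eigenvector of the quotient lifts to a Jordan block for $L_0$ in $M$ of size at most two.

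Finally, $m-\sum_j\tilde w_j\in\ker\pi=V_1$, which is $L_0$-semisimple, so it is itself a finite sum $\sum_i v_i$ of $L_0$-eigenvectors. Hence $m=\sum_iv_i+\sum_j\tilde w_j$ lies in the span of the finitely many vectors $\{v_i\}\cup\{\tilde w_j,\,(L_0-\mu_j)\tilde w_j\}$, which is a finite-dimensional $L_0$-invariant subspace sitting inside $\bigoplus_i\ker(L_0-\nu_i\,\mathrm{Id})\oplus\bigoplus_j\ker(L_0-\mu_j\,\mathrm{Id})$ together with its rank-two Jordan contributions. Since $m$ was arbitrary, $L_0$ acts locally finitely on $M$; this is the desired conclusion.

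The main obstacle I anticipate is precisely the lifting step: one must guarantee that an $L_0$-eigenvector of the quotient lifts to a vector of $M$ generating a \emph{finite-dimensional} $L_0$-invariant subspace, i.e.\ that the Jordan blocks of $L_0$ on $M$ stay bounded. This is where the semisimplicity of $V_1$ together with the discreteness and boundedness below of its $L_0$-spectrum is essential, allowing the offending eigencomponents with $\nu_i\neq\mu_j$ to be removed by the explicit rescaling above, so that a single further application of $(L_0-\mu_j)$ annihilates the remainder. The parallel argument in \cite{Fasquel:2020iqf} (Lemma 7.3) can be followed closely for the bookkeeping.
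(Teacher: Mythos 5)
Your proof is correct and follows essentially the same route as the paper: both arguments rest on the fact that the sub and quotient are $L_0$-diagonalizable with finite-dimensional eigenspaces (deduced from the semisimplicity of $A_{\hat{\sigma}}(L_{k}(\s_2))$ and the explicit twisted characters), and then conclude that every $m\in M$ lies in a finite-dimensional $L_0$-stable subspace built from eigenvectors of the two constituents. The only difference is that the paper delegates the lifting step to \cite{Fasquel:2020iqf} (Lemma 7.3), whereas you carry it out explicitly via the rescaled correction $x_j=\sum_{\nu_i\neq\mu_j}\tfrac{c_i}{\nu_i-\mu_j}v_i$; your version is in fact slightly more careful, since your lifts $\tilde w_j$ are genuinely \emph{generalized} eigenvectors (Jordan blocks of size at most two), while the paper's final display imprecisely places $m$ inside a direct sum of honest eigenspaces $\ker(L_0-\nu_i\,Id)\oplus\ker(L_0-\mu_j\,Id)$.
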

Then we can prove the following theorem which is Theorem \ref{thm:mainThm2} in the introduction.
\begin{thm}
The category of $\Z/2\Z$-twisted modules of the vertex operator algebra  $L_{k}(\mathfrak{sl}_{2})$ at boundary level $k$ is semi-simple.
\begin{proof}
For any distinct conformal dimension $\sigma^{-\frac{1}{2}}(\Delta_{m}), \sigma^{-\frac{1}{2}}(\Delta_{n})\in \sigma^{-\frac{1}{2}}(\Delta(j,u))$ which correspond to two distinct simple modules. We have $\sigma^{-\frac{1}{2}}(\Delta_{m})\neq\sigma^{-\frac{1}{2}}(\Delta_{n}) \, (\mathrm{mod}\,\mathbb{Z})$. It
is known \cite{Fasquel:2020iqf} (Lemma 7.4) that if there exists a nontrivial extension of two distinct simple $\mathbb{Z}_{2}$-twisted $L_{k}(\mathfrak{sl}_{2})$-modules
\begin{align*}
0\longrightarrow \sigma^{-\f12}(L(\Delta_{m},\lambda_{m}))\overset{\iota}{\longrightarrow} M\overset{\pi}{\longrightarrow} \sigma^{-\f12}(L(\Delta_{n},\lambda_{n}))\longrightarrow 0
\end{align*}
then $\sigma^{-\frac{1}{2}}(\Delta_{m})$ and $\sigma^{-\frac{1}{2}}(\Delta_{n})$ coincide modulo $\mathbb{Z}$. Then, we have $\mathrm{Ext}^{1}[\sigma^{-\f12}(L(\Delta_{m},\lambda_{m})),\sigma^{-\f12}(L(\Delta_{n},\lambda_{n}))]=0$ for $\sigma^{-\frac{1}{2}}(\Delta_{m})\neq\sigma^{-\frac{1}{2}}(\Delta_{n})$. For the simple modules which have the same conformal dimension  $\sigma^{-\frac{1}{2}}(\Delta_{m})=\sigma^{-\frac{1}{2}}(\Delta_{n})$.
One can consider the top space of these modules, the exact sequence
\begin{align*}
0\longrightarrow \sigma^{-\f12}(L(\Delta_{m},\lambda_{m}))_{\mathrm{top}}\overset{\iota}{\longrightarrow} M_{\mathrm{top}}\overset{\pi}{\longrightarrow} \sigma^{-\f12}(L(\Delta_{n},\lambda_{n}))_{\mathrm{top}}\longrightarrow 0
\end{align*}
has non-trivial extension of $A_{\hat{\sigma}}(L_{k}(\mathfrak{sl}_{2}))$ modules, This contradicts the fact that Zhu’s algebra $A_{\hat{\sigma}}(L_{k}(\mathfrak{sl}_{2})$ is semisimple. So, there is no nontrivial extension between two distinct simple modules and nontrivial self-extension. This completes the proof.
\end{proof}
\end{thm}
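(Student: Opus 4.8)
The plan is to reduce the semisimplicity of the entire category to the vanishing of $\mathrm{Ext}^{1}$ between simple objects, and then to control those extension groups by passing to top spaces, where everything is governed by the twisted Zhu's algebra $A_{\hat{\sigma}}(L_{k}(\mathfrak{sl}_{2}))$. First I would invoke the classification recorded in the first theorem of the introduction: at boundary admissible level $A_{\hat{\sigma}}(L_{k}(\mathfrak{sl}_{2}))$ is a finite-dimensional commutative algebra of the form $\mathbb{C}[x]/\langle I\rangle$ whose defining polynomial has pairwise distinct roots, hence it is semisimple, and its one-dimensional modules are in bijection with the simple $\mathbb{Z}_{2}$-twisted modules $\sigma^{-\frac{1}{2}}(L(\Lambda_{k,j}))$. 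Since the category is a finite-length abelian category, it is semisimple as soon as $\mathrm{Ext}^{1}$ vanishes between every ordered pair of simple objects, including the self-extension case; thus the theorem follows once I rule out nontrivial extensions.

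Next I would fix a short exact sequence $0\to \sigma^{-\frac{1}{2}}(L(\Delta_{m},\lambda_{m}))\to M\to \sigma^{-\frac{1}{2}}(L(\Delta_{n},\lambda_{n}))\to 0$ and split the argument according to whether the minimal conformal weights $\sigma^{-\frac{1}{2}}(\Delta_{m})$ and $\sigma^{-\frac{1}{2}}(\Delta_{n})$ agree modulo $\mathbb{Z}$. By the preceding lemma, $L_{0}$ acts locally finitely on $M$, so $M$ admits a generalized $L_{0}$-eigenspace decomposition, which is the main tool for the first case.

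In the case $\sigma^{-\frac{1}{2}}(\Delta_{m})\not\equiv \sigma^{-\frac{1}{2}}(\Delta_{n})\pmod{\mathbb{Z}}$, the $L_{0}$-spectra of the sub and of the quotient are disjoint modulo $\mathbb{Z}$, so the generalized eigenspace decomposition of $M$ separates the two pieces and yields a splitting. Equivalently, by the criterion of \cite{Fasquel:2020iqf} (Lemma 7.4) a nontrivial extension of two distinct simples forces their conformal weights to coincide mod $\mathbb{Z}$, which gives $\mathrm{Ext}^{1}=0$ in this range. The remaining case is $\sigma^{-\frac{1}{2}}(\Delta_{m})=\sigma^{-\frac{1}{2}}(\Delta_{n})$, which by the relation $\sigma^{-\frac{1}{2}}(\Delta(j,u))=\sigma^{-\frac{1}{2}}(\Delta(u-1-j,u))$ genuinely occurs for the pair $j\leftrightarrow u-1-j$, and which also subsumes self-extensions.

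For this last case I would restrict the exact sequence to the common top degree. The top spaces are modules over $A_{\hat{\sigma}}(L_{k}(\mathfrak{sl}_{2}))$, and the restriction produces a short exact sequence of $A_{\hat{\sigma}}$-modules; semisimplicity of $A_{\hat{\sigma}}$ forces this sequence to split. The main obstacle I anticipate is promoting this top-level splitting to a splitting of twisted $V$-modules: I expect to use that a simple $\mathbb{Z}_{2}$-twisted module is generated by its top space, so that a complement to $\sigma^{-\frac{1}{2}}(L(\Delta_{m},\lambda_{m}))_{\mathrm{top}}$ inside $M_{\mathrm{top}}$ generates a submodule of $M$ mapping isomorphically onto the quotient. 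Since the only place where the full module structure could obstruct splitting is precisely the extension of top spaces, once this lifting is justified every extension is seen to be trivial, so $\mathrm{Ext}^{1}$ vanishes in all cases and the category is semisimple.
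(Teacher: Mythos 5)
Your proposal follows essentially the same route as the paper: the same case split on whether the minimal conformal weights agree modulo $\mathbb{Z}$, the same appeal to \cite{Fasquel:2020iqf} (Lemma 7.4) together with local finiteness of $L_{0}$ in the first case, and the same reduction to top spaces and semisimplicity of the twisted Zhu algebra $A_{\hat{\sigma}}(L_{k}(\mathfrak{sl}_{2}))$ in the second. If anything, you are more explicit than the paper at the one delicate point --- lifting the splitting of top spaces to a splitting of twisted modules by using that a simple twisted module is generated by its top space --- a step the paper leaves implicit when it asserts that a nontrivial module extension yields a nontrivial extension of $A_{\hat{\sigma}}$-modules on top spaces.
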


\subsection{Modular transformation}
Applying (\ref{tc1}) to (\ref{sfct}), one has the following $S$-transformation for twisted characters:
\begin{align}
    \begin{split}
      \chi_{\La}^{\a,\beta}(\tau,z,t)|_{S}=(\chi_{\La}(\tau,z,t)|_{S})^{\be,-\a},
    \end{split}
\end{align}
where $S=\begin{pmatrix}0 & -1 \\1 & 0 \end{pmatrix}$.

\begin{ex}{(\textbf{Boundary admissible level})}
We follow the same notations in Section \ref{bal1}. Let $\text{Spec}(T)$ be the eigenvalues of $T$. First, note that $\text{Spec}( (\epsilon  h_{1})_{(0)})\in 2\epsilon \Z$, $(\epsilon\in\mathbb{Q})$. \noindent The twisted character (\ref{tc2}) evaluated at $h=2\pi i (-\tau d+ \al_1 z+ct)$ is
\begin{align}\label{bal3}
\begin{split}
    \chi_{\La}^{0,-\frac{\ell}{2}\a_{1}}(\tau,z,t)&=(\mathbf{y}e^{-2\pi i \<\al_1 z,-\frac{\ell}{2}\a_{1}\>}q^{\frac{|-\frac{\ell}{2}\a_{1}|^2}{2}})^m\tr_{L(\La)} e^{2\pi i \al_1 z}q^{\frac{\ell}{2}\al_{1}}                                                          q^{L_{(0)}-\frac{m}{24}}\\
                                                &=(\mathbf{y}\mathbf{z}^{\ell}q^{\frac{\ell^2}{4}})^m\tr_{L(\La)} (\mathbf{z}q^{\frac{\ell}{2}})^{h_{0}}                                                          q^{L_{(0)}-\frac{m}{24}},
\end{split}
\end{align}
where we identity $(h_{1})_{(0)}$ with $\alpha_{1}$. (\ref{bal3}) is the same as (\ref{twitransf}).

We split the discussion into two cases:
\begin{itemize}
    \item $\text{Spec}( (\epsilon  h_{1})_{0})\in \Z$, i.e., $\epsilon=\frac{\ell}{2},$ $\ell\in \mathbb{Z}$. Let $\sigma$ be spectral flow automorphism on $M$ (see \cite{creutzig2013modular} for more details). In this case, (\ref{bal3}) is the character of $\sigma^{\ell}(M)$.
    \item $\text{Spec}( (\epsilon h_{1})_{0})\in \frac1T\Z,$ $T\in \Z_{>0}$, i.e., $\epsilon=\frac{\ell}{2}$, $\ell\in \frac1T\Z,$ $T\in \Z_{>0}$. (\ref{bal3}) now becomes the character of the $e^{2\pi i (\epsilon  h_{1})_{(0)}}$-twisted module $\hat{M}$ introduced in Section \ref{twm}, where $v=\epsilon h_{1}$.  The $S$-transformation of $\chi_{\lambda_{m,k}}^{0,-\frac{\ell}{2}\a_{1}}(\tau,z,t)$ is
    \begin{align}
    \begin{split}
      \chi_{\lambda_{m,k}}^{0,-\frac{\ell}{2}\a_{1}}(\tau,z,t)|_{S}&= (\chi_{\lambda_{m,k}}(\tau,z,t)|_{S})^{-\frac{\ell}{2}\a_{1},0}\\
                                                                 &=\displaystyle \sum_{\lambda_{m,k'}\in P_{m}}a_{\lambda_{m,k},\lambda_{m,k'}}(\chi_{\lambda_{m,k'}}^{-\frac{\ell}{2}\a_{1},0}(\tau,z,t))
      \end{split}
    \end{align}
    where \[a_{\lambda_{m,k},\lambda_{m,k'}}=(-1)^{k+k'}e^{-\frac{2\pi i k k'}{u}}\frac{1}{\sqrt{u}}\sin\frac{u\pi}{2}, \]and
    \begin{align}
    \begin{split}
      \chi_{\lambda_{m,k'}}^{-\frac{\ell}{2}\a_{1},0}(\tau,z,t)=\chi_{\lambda_{m,k'}}(\tau,z-\frac{\ell}{2},t).
     \end{split}
    \end{align}
\end{itemize}
As one can see, the $S$ transformation of the character of  $e^{2\pi i (\sigma h_{1})_{(0)}}$-twisted module $L(\lambda_{m,k})$ can be written as linear combination of the characters of untwisted modules with the same $S$-matrix as in untwisted case.
\end{ex}

\section{Twisted Zhu's Bimodule of highest weight modules}\label{section3}
%{\color{blue}The definition of twisted Zhu's algebra  $A_{g}(V)$}.

The fusion rules among admissible representations in category $\mathcal{O}$ were studied by many authors (\cite{dong1997vertex,Ridout:2008nh,Ridout:2010qx,Creutzig:2012sd,Creutzig:2013yca,Gaberdiel:2001ny}, and etc.). They found that the Verlinde formula is no longer true in the case of $L_{k}(\mathfrak{g})$, where $k$ is the admissible level and $\mathfrak{g}$ is a simple Lie algebra, since the negative coefficient would appear. The method used by Dong-Li-Mason is to apply Frenkel-Zhu's bimodule theorem \cite{FrenkelZhu}, while the other methods involve the use of characters and some machinery from physics \cite{Awata:1992sm,Gaberdiel:2001ny,Lesage:2002ch}. In this section, we shall compute the twisted Zhu's bimodules recently introduced in \cite{zhu2022bimodues}; in particular, the twisted Zhu's algebra is a special case, then we use the twisted Zhu's algebra and bimodules to classify the twisted modules and also calculate the fusion rules among the twisted modules.

Let $M^1, M^2, M^3$ be $g_1, g_2, g_3$-twisted modules, respectively. We consider the fusion rules $N^3_{1\;2}$, where $1,2,3$ represent modules $M^1, M^2, M^3$. Recall the following definition \cite{zhu2022bimodues} of $A_{g_2}$-bimodule $A_{g_1g_2,g_2}(M^1)$. Denote the remainder of $r\in \mathbb{N}$ divided by $T$ by $[r]$.

For homogeneous element $u\in V$ and $w_1\in M^1$, one defines
\[u\circ_{g_1g_2,g_2}w_1=\res{z}{\f{(1+z)^{\wt u-1+\delta(j_2)+\f{j_2}{T}}}{z^{1+\delta(j_1,j_2)-\f{j_1}{T}}}}\]
where $u\in V^{(j_1,j_2)}$ and
\begin{equation}
\delta(j_1,j_2)=\begin{cases}
1,& j_2=0\\
1,& j_2\neq 0,\; j_1+j_2\geq T\\
0,& j_2\neq 0,\; j_1+j_2<T.
\end{cases}
\end{equation}
Let $O'_{g_1g_2,g_2}(M^1)$ be the subspace of $M^1$ spanned by all $u\circ_{g_1g_2,g_2}w_1$. One defines $A_{g_1g_2,g_2}(M^1)=M^1/O'_{g_1g_2,g_2}(M^1)$.

Now we recall \cite{zhu2022bimodues} the $A_{g_1g_2}(V)$-$A_{g_2}(V)$-bimodule on $A_{g_1g_2,g_2}(M)$. For homogeneous $u\in V$ and $w\in M$, the left and right bimodule actions are defined as
\begin{equation}
  {u*_{g_1g_2,g_2}w}=
  \begin{cases}
    {\res{z}{Y_{M}(u,z)w\f{(1+z)^{\wt u-1+\delta(j_2)+\f{j_2}{T}}}{z^{1-\f{j_1}{T}}}}}  &  {j_1+j_2\equiv0\;(\text{mod}\;\;T)}\\
    {0}  &  \text{otherwise},
  \end{cases}
\end{equation}
and
\begin{equation}\label{multiplication}
  w*_{g_2,g_1g_2}u=
  \begin{cases}
    \res{z}{Y_{M}(u,z)w\f{(1+z)^{\wt u-1}}{z^{1-\f{j_1}{T}}}}  &  {j_2=0}\\
    {0},  &  \text{otherwise}.
  \end{cases}
\end{equation}

 In particular, when $g_1=id$, $g_2=g$, the $A_{g,g}(V)$ with the multiplication given by (\ref{multiplication}) is the same as the $g$-twisted Zhu's algebra, $A_g(V)$ defined in \cite{dong2000modular}.

\begin{conj}\cite{zhu2022bimodues}
One has:
\[\dim \hom_{A_{g_1g_2}(V)}(A_{g_1g_2,g_2}(M^{1})\otimes M^{2}(0),M^{3}(0))=N^{3}_{1\; 2}.\]	
\end{conj}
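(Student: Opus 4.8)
The plan is to prove this by exhibiting a pair of mutually inverse linear maps between the space $\mathcal{I}\binom{M^3}{M^1\,M^2}$ of intertwining operators of the indicated type among the twisted modules, whose dimension is by definition $N^3_{1\,2}$, and the Hom space $\hom_{A_{g_1g_2}(V)}(A_{g_1g_2,g_2}(M^{1})\otimes M^{2}(0),M^{3}(0))$, where the tensor product is understood as balanced over $A_{g_2}(V)$. This is the twisted analogue of the Frenkel--Zhu bimodule theorem, so the overall architecture should mirror \cite{FrenkelZhu} while tracking the fractional gradings $\tfrac{j_i}{T}$ throughout.

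First I would construct the forward map. Recall that $M^2(0)$ and $M^3(0)$ are the lowest-conformal-weight (top) spaces, carrying the $A_{g_2}(V)$- and $A_{g_1g_2}(V)$-module structures supplied by twisted Zhu theory. Given an intertwining operator $\mathcal{Y}(\cdot,z)$ of type $\binom{M^3}{M^1\,M^2}$, for homogeneous $w_1\in (M^1)^{j_1}$ and $w_2\in M^2(0)$ there is a unique power of $z$ — fixed by matching conformal weights together with the fractional shift $\tfrac{j_1}{T}$ — whose coefficient in $\mathcal{Y}(w_1,z)w_2$ lands in $M^3(0)$. Extracting this coefficient defines a bilinear pairing $o\colon M^1\times M^2(0)\to M^3(0)$, $(w_1,w_2)\mapsto o(w_1)w_2$.

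Next I would show $o$ descends to the required bimodule map. The two things to check are: (i) $o(w_1)w_2=0$ whenever $w_1\in O'_{g_1g_2,g_2}(M^1)$, so that $o$ factors through $A_{g_1g_2,g_2}(M^1)$; and (ii) the descended map is $A_{g_1g_2}(V)$-linear for the left action and balanced over $A_{g_2}(V)$ for the right action. Both follow from the commutator and iterate (associativity) formulas for intertwining operators among twisted modules, rewritten as residue identities. The key point is that the factors $(1+z)^{\wt u-1+\delta(j_2)+j_2/T}$ and the pole orders $z^{1-j_1/T}$ appearing in the definitions of $\circ_{g_1g_2,g_2}$, $*_{g_1g_2,g_2}$ and $*_{g_2,g_1g_2}$ are arranged precisely so that, after taking the weight-matching residue, the relations $o(u*_{g_1g_2,g_2}w_1)=o_{M^3}(u)\,o(w_1)$ and $o(w_1*_{g_2,g_1g_2}u)=o(w_1)\,o_{M^2}(u)$ hold on the top spaces, where $o_M(u)$ denotes the Zhu-algebra action on $M(0)$; this is the twisted refinement of the classical Frenkel--Zhu computation.

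The remaining steps are injectivity and surjectivity, and the main obstacle is the latter. For injectivity, if $o\equiv 0$ I would argue $\mathcal{Y}=0$ by induction on conformal weight, using the $L(-1)$-derivative property $\mathcal{Y}(L(-1)w_1,z)=\frac{d}{dz}\mathcal{Y}(w_1,z)$ together with the fact that $M^1$ and $M^2$ are each generated from their top spaces by the twisted modes, so that the leading coefficient determines the whole operator. Surjectivity is where the real difficulty lies: given a bimodule homomorphism $\phi$, one must reconstruct an intertwining operator realizing it. The natural route, following Li's generalized-Verma-module refinement of Frenkel--Zhu, is to define $\mathcal{Y}$ first on a twisted generalized Verma module built from the $A_{g_2}(V)$-module $M^2(0)$, declaring its top-level behaviour to be $\phi$ and propagating by the twisted Jacobi identity, and then to descend to the actual module $M^3$. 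The hard part will be verifying that the series so produced genuinely satisfies the full twisted Jacobi identity — in particular the fractional factor $(\tfrac{z_1-z_0}{z_2})^{-r/T}$ — rather than only the commutator and iterate relations on the top level, and that it descends well-definedly to $M^3$; controlling these fractional-power formal series and the descent is precisely the gap that the untwisted proof closes with weak-module machinery, and is presumably why the statement is recorded here as a conjecture.
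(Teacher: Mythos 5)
First, a point of order: the paper does not prove this statement at all. It is recorded explicitly as a conjecture, attributed to \cite{zhu2022bimodues}, and the remark immediately following it notes only that the special case $g_1=g_2=\mathrm{id}$ is the Frenkel--Zhu theorem of \cite{FrenkelZhu}; the paper then \emph{assumes} the conjecture as input for its fusion-rule computations in Section 3 (the introduction says as much: ``a conjectural twisted version of Frenkel-Zhu's bimodule theorem''). So there is no proof of the paper's to compare yours against, and the only relevant question is whether your argument would settle the conjecture on its own.

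It would not, and you concede this yourself. The forward half of your argument --- extracting the weight-matching coefficient $o(w_1)$ of an intertwining operator, checking that it annihilates $O'_{g_1g_2,g_2}(M^1)$, and verifying $o(u*_{g_1g_2,g_2}w_1)=o_{M^3}(u)o(w_1)$ and $o(w_1*_{g_2,g_1g_2}u)=o(w_1)o_{M^2}(u)$ --- is the routine adaptation of the untwisted computation and I see no obstruction there, though even your injectivity step silently uses that $M^2$ is generated from $M^2(0)$ by twisted modes \emph{and} a cogeneration property of $M^3$ (that its contragredient is generated from the top space), which should be stated as hypotheses. The fatal issue is surjectivity: reconstructing a genuine twisted intertwining operator from an abstract bimodule homomorphism requires building a fractional-power series on a twisted generalized Verma module, proving it satisfies the full twisted Jacobi identity including the factor $\left(\frac{z_1-z_0}{z_2}\right)^{-r/T}$, and descending to the simple quotients. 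You explicitly defer exactly this step, remarking that it ``is presumably why the statement is recorded here as a conjecture.'' A proposal whose central step is deferred to the truth of the statement being proved is a restatement of the conjecture plus a strategy, not a proof. Note also that even in the untwisted case the Frenkel--Zhu theorem is false without additional hypotheses (rationality, or module conditions; this was the content of Li's correction), and the algebras of interest here, $L_k(\mathfrak{sl}_2)$ at admissible level, are not rational --- so any complete argument must first identify the precise hypotheses under which the twisted statement can hold, something neither your sketch nor the paper attempts.
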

\begin{rem}
When $g_1=g_2=id$, this Conjecture was proved by Frenkel and Zhu in \cite{FrenkelZhu}.	
\end{rem}

 Let $e,f,h$ be the basis of $\mathfrak{sl}_2$. Define $\hat{\sigma}=e^{\f{\pi ih_{(0)}}{2}}$.  One can obtain  irreducible $\hat{\sigma}$-twisted ($\mathbb{Z}_{2}$-twisted) modules of $L_{\mathfrak{sl}_2}(\ell,0)$ in category $\mathcal{O}$ by using Li's Delta operator:
\[\Delta_{-\f12}(z)=z^{\f14h_{(0)}}
\exp\left(\sum_{n=1}^{\infty}\f{\f14h_{(n)}}{-n}(-z)^{-n}\right),\]
 i.e., $\sigma^{-\f12}(L_{\s_2}(\ell,j))$, where we define \[(\sigma^{-\f12}(M),Y^{-\f12}_M(\cdot,z))
:=(M,Y_M(\Delta_{-\f12}(z)\cdot,z)). \] %{\color{blue}\[(\sigma^{-\f12}(M),Y_M(\cdot,z))
%%:=(M,Y_M(\Delta_{-\f12}(z)\cdot,z)). \]}

Now we compute the twisted Zhu's bimodule in the case where $M^1$ is untwisted and
$M^2,M^3$ are $\hat{\sigma}$-twisted modules.
We have
\begin{equation}
u\circ_{\hat{\sigma},\hat{\sigma}}w=
\res{z}{\f{(1+z)^{\wt u-1+\delta(j_2)+\f{j_2}{2}}}{z^{1+\delta(j_2)}}}Y_M(u,z)w,
\end{equation}
where $u\in V^{(0,j_2)}$.
If $u\in V_1^{(0,1)}$, one has
\begin{align}\label{fu1}
\begin{split}
&u\circ_{\hat{\sigma}, \hat{\sigma}}w\\
&=\res{z}{\f{(1+z)^{\f12}}{z^{m+1}}}Y_M(u_{(-1)}\mathbf{1},z)w\\
&=u(-m-1)w+\f12u(-m)w-\f18u(-m+1)w+\f{1}{16}u(-m+2)w+\cdots\in O_{\hat{\sigma},\hat{\sigma}}(M),\end{split}\end{align}
where $m\geq0$.

In our case, we have the following bimodule action \cite{dong1997vertex}
\begin{prop}
  The $A_{\hat{\sigma}} (V)$-bimodule $A_{\hat{\sigma},\hat{\sigma}}(\sigma^{-\f12}(M))$ is isomorphic to $\C[x,y]$ with bimodule action as follows:
  \begin{align}\label{biact}
    x*f(x,y)=(x+j-2y\f{\partial}{\partial y})f(x,y), \quad f(x,y)*x=xf(x,y)
  \end{align}
  for any $f(x,y)\in \C[x,y]$, where $h_{(0)}v=jv$.
\end{prop}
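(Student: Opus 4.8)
The plan is to follow the strategy of Dong--Li \cite{dong1997vertex} for the untwisted Frenkel--Zhu bimodule, adapting every residue computation to the $\hat\sigma$-twisted setting, where the essential new feature is the fractional factor $(1+z)^{1/2}$ coming from the odd sector $j_2=1$. I take $M$ to be the (generalized) Verma module at level $\ell$ with highest weight vector $v$, $h_{(0)}v=jv$, so that $M$ carries a PBW basis in the modes $e_{(-n)},f_{(-n)},h_{(-n)}$ $(n\ge 1)$ together with $f_{(0)},h_{(0)}$ applied to $v$. The two coordinates are to be identified concretely by $x^a y^b \leftrightarrow [\,h_{(-1)}^a f_{(0)}^b v\,]$, with $1=[v]$: here $x$ records the right $A_{\hat\sigma}(V)=\C[x]$-action and $y$ records the top-level $f$-descents.

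First I would pin down the defining relations. For $u\in V^{(0,1)}$ (the vectors $e,f$ and their descendants) equation (\ref{fu1}) gives, for each $m\ge 0$,
\[ e_{(-m-1)}w+\tfrac12 e_{(-m)}w-\tfrac18 e_{(-m+1)}w+\cdots \in O'_{\hat\sigma,\hat\sigma}(M), \]
and the analogous relation for $f$; for $u=h\in V^{(0,0)}$ the corresponding residues control the even sector. Using these relations together with an induction on the conformal degree (and a fixed monomial order on the PBW generators), I would show that every class in $A_{\hat\sigma,\hat\sigma}(M)$ collapses onto a $\C$-combination of the $h_{(-1)}^a f_{(0)}^b v$: the leading term of each relation lets one trade a high creation mode $e_{(-m-1)},f_{(-m-1)}$ or $h_{(-m-1)}$ $(m\ge 1)$ for strictly lower ones, eliminating the $e$-direction entirely and folding $h_{(-n)},f_{(-n)}$ down onto $h_{(-1)}$ and $f_{(0)}$. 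This produces a surjection $\Phi\colon \C[x,y]\twoheadrightarrow A_{\hat\sigma,\hat\sigma}(M)$.

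Next I would compute the two actions on the classes $[h_{(-1)}^a f_{(0)}^b v]$. A direct residue computation gives the right action $w*_{\hat\sigma,\hat\sigma} h=h_{(-1)}w$, i.e.\ plain multiplication by $x$, which is exactly $f*x=xf$. For the left action the extra factor $(1+z)$ (coming from the exponent $\wt h-1+\delta(j_2)=1$) yields $h*_{\hat\sigma,\hat\sigma}w=h_{(-1)}w+h_{(0)}w$; since $h_{(0)}f_{(0)}^b v=(j-2b)f_{(0)}^b v$, this is precisely $x\cdot y^b+(j-2b)y^b=(x+j-2y\tfrac{\partial}{\partial y})y^b$, establishing (\ref{biact}). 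Hence $\Phi$ intertwines the stated bimodule structures.

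The main obstacle is the injectivity of $\Phi$, i.e.\ proving that the monomials $[h_{(-1)}^a f_{(0)}^b v]$ are genuinely linearly independent and that $O'_{\hat\sigma,\hat\sigma}(M)$ imposes no further relation. I would handle this as in \cite{dong1997vertex}: either by a PBW-filtration argument showing that the associated graded of $A_{\hat\sigma,\hat\sigma}(M)$ is itself a quotient of $\C[x,y]$ of the correct size, forcing equality, or---more structurally---by transporting Dong--Li's untwisted isomorphism $A(M)\cong\C[x,y]$ through the Li $\Delta$-operator/spectral-flow dictionary of Proposition \ref{li1996local}, under which $\sigma^{-1/2}$ is invertible and merely shifts the $h_{(0)}$-grading, thereby accounting for the $+j$ and $-2y\tfrac{\partial}{\partial y}$ corrections in the left action. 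Once the independence is secured, $\Phi$ becomes an isomorphism of $A_{\hat\sigma}(V)$-bimodules and the proposition follows.
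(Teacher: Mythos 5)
Your proposal is correct and, at its core, is the same argument as the paper's: the paper's entire proof consists of the two residue computations you perform---the left action $h*_{\hat{\sigma},\hat{\sigma}}w=(h_{(-1)}+h_{(0)})w$ and the right action $w*_{\hat{\sigma},\hat{\sigma}}h=h_{(-1)}w$ evaluated on the classes $[h_{(-1)}^a f_{(0)}^b v]$---followed by the identification $x=h_{(-1)}+O(M(\ell,j))$, $y=f_{(0)}+O(M(\ell,j))$. The spanning and linear-independence of these classes, which you spell out, is exactly what the paper delegates to \cite{dong1997vertex}, and your second injectivity route (transporting the untwisted Dong--Li isomorphism through $\Delta(1)$) is precisely the paper's own later Proposition \ref{isozhu}.
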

\begin{proof}
  By Definition, we have
\begin{align*}
    h_{(-1)}*(h_{(-1)}^mf_{(0)}^nv)&=(h_{(-1)}+h_{(0)})h_{(-1)}^mf_{(0)}^nv\\
                                 &=(h_{(-1)}+j-2n)h_{(-1)}^mf_{(0)}^nv
\end{align*}
and
\begin{align*}
  (h_{(-1)}^mf_{(0)}^n)*h_{(-1)}=h_{(-1)}(h_{(-1)}^mf_{(0)}^n)=h_{(-1)}^{m+1}f_{(0)}^nv.
\end{align*}
The Proposition follows immediately if we set
$x=h_{(-1)}+O(M(\ell,j)), y=f(0)+O(M(\ell,j))$.
\end{proof}

\subsection{Examples}
Let $g$ be an automorphism of the universal affine vertex algebra  $V^k(\mathfrak{g})$ and denote by $\mathfrak{g}^0$ the fixed point subalgebra of $\mathfrak{g}$ under the action of $g$.
The $g$-twisted Zhu's algebra of  $V^k(\mathfrak{g})$ is the universal enveloping algebra of $\mathfrak{g}^0$, $U(\mathfrak{g}^0)$, via the map
\cite{yang2017twisted}
\begin{align*}
    & F: A_{g}(V^k(\mathfrak{g}))\mapsto U(\mathfrak{g}^0)\\
    & F([x^1_{(-n_1-1)}x^2_{(-n_2-1)}\cdots x^m_{(-n_m-1)}\mathbf{1}])=(-1)^{n_1+n_2+\cdots+n_m}x^mx^{m-1}\cdots x^1,
\end{align*}
where $x^1,x^2,...,x^m\in \mathfrak{g}^0$.
Moreover, $g$-twisted Zhu's algebra of a simple affine vertex algebra $L_{k}(\mathfrak{g})$ is $U(\mathfrak{g}^0)/\langle [U(\mathfrak{g})v_{{\text{sing}}}]\rangle$, where $[v_{{\text{sing}}}]$ means the equivalence class of singular vector $v_{sing}$ generating the maximal submodule of $V^k(\mathfrak{g})$ in $A_g(L_{k}(\mathfrak{g})).$

Now we compute the $\hat{\sigma}$-twisted Zhu's algebra $A_{\hat{\sigma}}(L_{-\frac43}(\s_2))$. We denote by $L_{\mathfrak{sl}_{2}}(\ell,0) $ or $L(\ell,0)$ the vertex algebra $L_{\ell}(\mathfrak{sl}_{2})$ at level $\ell$, and denote  the admissible irreducible $L_{\ell}(\mathfrak{sl}_{2})$-module by $L_{\mathfrak{sl}_{2}}(\ell,j)$ or $L(\ell,j)$, where $j$ is the Dynkin label of finite part of the admissible weight of $\widehat{\mathfrak{sl}_{2}}$. Note that  $A_{\hat{\sigma}}(L_{-\frac43}(\s_2))$ coincides with $A_{\hat{\sigma},\hat{\sigma}}(L_{\mathfrak{sl}_2}(-\f43,0))$.
%Let $(U(\mathfrak{g})v_{{\text{sing}}})_0$ be weight $0$ subspace of $U(\mathfrak{g})v_{{\text{sing}}} $ with respect to $h$. Note $[e_{(-n-1)}]=[f_{(-m-1)}]=0$, $m,n\in \N$ in $A_{e^{2\pi i (\sigma h_{1})_{(0)}}}$.
By above general argument, we have
\begin{align*}
    A_{\hat{\sigma}}(L_{-\frac43}(\s_2))\cong U(h)/\langle[ U(\mathfrak{g})v_{{\text{sing}}}] \rangle.
\end{align*}
The weight zero singular vector is \cite{creutzig2013modular}
\begin{align}\label{sing}
    9h_{(-1)}^3+18h_{(-2)}h_{(-1)}-16h_{(-3)}-36f_{(-1)}h_{(-1)}e_{(-1)}-24e_{(-2)}f_{(-1)}+96f_{(-2)}e_{(-1)}\mathbf{1}.
\end{align}
%For $a\in {}^{\frac12}(L_{-\frac43}(sl_2))$, by definition of Zhu's algebra, one has
%\begin{align}\label{equrelation}a_{(-m-1)}v+\frac12a_{(-m)}v+\binom{\frac12}{2}a_{(-m)}v+\cdots \in J_\Gamma(L_{-\frac43}(sl_2)),\end{align}
%where $v\in L_{-\frac43}(sl_2)$ and $m\geq 0$.
Using (\ref{fu1}), the equivalence class of (\ref{sing}) in $A_{\hat{\sigma}}(L_{-\frac43}(\s_2))$    can be written in terms of $h$. To that end, for $[f_{(-1)}h_{(-1)}e_{(-1)}]$ we have:
\begin{align*}
  [f_{(-1)}h_{(-1)}e_{(-1)}]&=[(-\frac12 f_{(0)}+\frac18 f_{(1)}-\frac{1}{16}f_{(2)}+\cdots)h_{(-1)}e_{(-1)}]  \\
                            &=[\frac12h_{(-1)}^2-\frac12h_{(-1)}+\frac16-\frac{5}{12}h_{(-1)}+\frac16]         \\
                            &=[\frac12h_{(-1)}^2-\frac{11}{12}h_{(-1)}+\frac13];
\end{align*}
for $[e_{(-2)}f_{(-1)}]$ we have:
\begin{align*}
    [e_{(-2)}f_{(-1)}]&=[(-\frac12e_{(-1)}+\frac18e_{(0)}-\frac{1}{16}e_{(1)}+\cdots)f_{(-1)}]\\
                      &=[\frac38h_{(-1)}+\frac16];
\end{align*}
for $[f_{(-2)}e_{(-1)}]$ we have:
\begin{align*}
    [f_{(-2)}e_{(-1)}]=[-\frac38h_{(-1)}+\frac16].
\end{align*}
Combining everything together, we get (\ref{sing}) equals
\begin{align}\label{sing21}
    [9h_{(-1)}^3+18h_{(-2)}h_{(-1)}-16h_{(-3)}+18h_{(-1)}^2+12h_{(-1)}].
\end{align}
The image of $(\ref{sing21})$ under $F$ is
\begin{align*}
    h(h+\frac23)(h-\frac{2}{3}).
\end{align*}
Thus, $A_{\hat{\sigma}}(L_{-\frac43}(\s_2)) \cong \mathbb{C}[h]/\langle h(h+\frac23)(h-\frac{2}{3}) \rangle$.

Consider the $\hat{\sigma}$-invariant subspace of $M^1$ and $M^2$. One has usual Zhu's bimodule $A((M^i)^{\hat{\sigma}})
=(M^i)^{\hat{\sigma}}/O((M^i)^{\hat{\sigma}})$, where $i=2,3$. From above definition, we have
\begin{align*}
&A_{\hat{\sigma},\hat{\sigma}}(M^2)=M^2/O\left((M^2)^{\hat{\sigma}}\right),\\
&A_{\hat{\sigma},\hat{\sigma}}(M^3)=M^3/O\left((M^3)^{\hat{\sigma}}\right).
\end{align*}

\subsection{Relation between Dong-Li-Mason's Zhu's bimodules and twisted Zhu's bimodules}
Let $\omega$ be the original Sugawara Virasoro vector of $L(\ell,0)$. Set $\omega_z=\omega+\f12z h(-2)\bd{1}\in L(\ell,0),$ where $z$ is a complex number. Then $\omega_z$ is a Virasoro vector of $L(\ell,0)$ with the central charge $c_{\ell,z}=c_{\ell}-6\ell z^2$. Let $z$ be a positive rational number less than $1$. Note that the vertex operator algebra $(L(\ell,0),Y,\bd{1},\omega_z)$ is $\Q$-graded instead of $\Z$-graded. In \cite{dong1997vertex} authors extend the definition of Zhu's $A(V)$-theory of one-to-one correspondence between the set of equivalence classes of irreducible admissible $V$-modules and the set of equivalence classes of irreducible $A(V)$-modules and Zhu-Frenkel's $A(M)$-theory for fusion rules to any $\Q$-graded vertex operator algebra $V$.
Denote the Zhu's algebra and bimodule of $V$ by $A^{\dlm}(V)=V/V\circ_{\dlm}V$ and $A^{\dlm}(M)=M/V\circ_{\dlm}M.$
%Let $V$ be a $\f12\Z$-graded conformal vertex algebra with conformal vector $\omega$. Let $H\in V$ such that
%\[L_n H=\delta_{n,0}H, \quad H_{(n)}H=\kappa \delta_{n,1}\mathbf{1},\quad \text{for} \;\; n\in \mathbb{N}.\]
%Let $\Delta(z)$ be Li's $\Delta$-operator associated with

Assume
\[\exp \p{\sum_{n\geq 1}\f{\f14h_{(n)}}{-n}(-z)^{-n}}=\sum_{n\geq 0}u_{(n)}z^{-n}.\]
For brevity, we suppress the subscript of $\Delta_{-\f12}$ and denote it by $\Delta$. Let $\epsilon(e)=\epsilon(f)=0$ and $\epsilon(h)=1$. We have
\begin{align*}
  \Delta(1)(a\circ_{\te{dlm}} m) &=\Delta(1)\res{z=0}{\p{Y(a,z)\f{(1+z)^{[\wt (a)_{\te{}}]}}{z^{1+\epsilon(a)}}m}} \\
  &=\res{z=0}{\p{\Delta(1)Y(a,z)\f{(1+z)^{[\wt(a)_{\te{}}]}}{z^{1+\epsilon(a)}}m}}\\
  &=\res{z=0}{Y(\Delta(z+1)a,z)\f{(1+z)^{[\wt(a)_{\te{}}]}}{z^{1+\epsilon(a)}}\Delta(1)m}\\
  &=\sum_{n\geq 0}\res{z=0}{Y(u_{(n)}a,z)\f{(1+z)^{[\wt(a)_{\te{}}]+\lambda-n}}{z^{1+\epsilon(a)}}\Delta(1)m}\\
  &=\sum_{n\geq 0}\res{z=0}{Y(u_{(n)}a,z)\f{(1+z)^{[\wt(u_{(n)}a)]+\lambda}}{z^{1+\epsilon(a)}}\Delta(1)m}\\
  &=(\Delta(1)a)\circ_{\hat{\sigma},\hat{\sigma}}(\Delta(1)m).
  \end{align*}
where $\f{h_{(0)}}{4}a=\lambda a.$
Using  similar arguments, we get
\begin{align*}
  \Delta(1)(a*_{\te{dlm}}m) &=(\Delta(1)a)*_{\hat{\sigma},\hat{\sigma}}(\Delta(1)m), \\
  \Delta(1)(m*_{\te{dlm}}a)
  &=(\Delta(1)m)*_{\hat{\sigma},\hat{\sigma}}(\Delta(1)a),
\end{align*}
where $*_{\te{dlm}}$ is the bimodule action defined by \cite{dong1997vertex}.
Thus we have the following result.
\begin{prop}\label{isozhu}
The map $V\rightarrow V, a\mapsto \Delta(1)a,$ induces an algebra isomorphism
\[A^{\te{dlm}}(V)\iso A_{\hat{\sigma}}(V).\]
The map $M^1\rightarrow M^1, m\mapsto \Delta(1)m$ induces an $A_{\hat{\sigma},\hat{\sigma}}(V)(\cong A^{\te{dlm}}(V))$-bimodule isomorphism
\[A^{\te{dlm}}(M^1)\iso A_{\hat{\sigma},
\hat{\sigma}}(M^1)\]
\end{prop}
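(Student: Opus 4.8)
The plan is to treat the three displayed compatibility computations immediately above the statement as the only substantive input, and to reduce everything else to the invertibility of $\Delta(1)$ together with formal linear algebra on quotient spaces.

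First I would record that $\Delta:=\Delta_{-\f12}$ evaluated at $z=1$ is a linear automorphism of $V$, and likewise of $M^1$. Since $\Delta_{-\f12}(z)=z^{\f14 h_{(0)}}\exp\p{\sum_{n\geq 1}\f{\f14 h_{(n)}}{-n}(-z)^{-n}}$, the prefactor $z^{\f14 h_{(0)}}$ specialises to the identity at $z=1$, so $\Delta(1)=\exp\p{\sum_{n\geq 1}\f{\f14 h_{(n)}}{-n}(-1)^{-n}}$. The positive modes $h_{(n)}$ with $n\geq 1$ strictly lower the conformal weight and hence act locally nilpotently on every vector of $V$ (and of $M^1$), so this exponential is a well-defined operator with inverse $\exp\p{-\sum_{n\geq 1}\f{\f14 h_{(n)}}{-n}(-1)^{-n}}$; in particular $\Delta(1)$ is bijective and $\Delta(1)\bd{1}=\bd{1}$.

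Next, for the algebra statement I would specialise the first displayed identity to the case $m\in V$, giving $\Delta(1)(a\circ_{\dlm}m)=(\Delta(1)a)\circ_{\hat\sigma,\hat\sigma}(\Delta(1)m)$, so $\Delta(1)$ sends each generator of $V\circ_{\dlm}V$ to a generator of $O_{\hat\sigma}(V)$. Because $\Delta(1)$ is a bijection, as $a,m$ range over $V$ the pairs $(\Delta(1)a,\Delta(1)m)$ range over all of $V\times V$, so $\Delta(1)$ carries the spanning set of $V\circ_{\dlm}V$ bijectively onto the spanning set of $O_{\hat\sigma}(V)$, whence $\Delta(1)\p{V\circ_{\dlm}V}=O_{\hat\sigma}(V)$. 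Therefore $\Delta(1)$ descends to a linear isomorphism $A^{\dlm}(V)\iso A_{\hat\sigma}(V)$. The displayed identity for $*_{\dlm}$ (again with $m\in V$) shows the descended map intertwines the products $*_{\dlm}$ and $*_{\hat\sigma,\hat\sigma}$, and it preserves the unit since $\Delta(1)\bd{1}=\bd{1}$; combined with bijectivity this makes it an algebra isomorphism.

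Finally, for the bimodule statement I would repeat the identical reasoning with a general module $M^1$: bijectivity of $\Delta(1)$ on $M^1$ together with the $\circ_{\dlm}$-identity yields $\Delta(1)\p{V\circ_{\dlm}M^1}=O_{\hat\sigma,\hat\sigma}(M^1)$, so $\Delta(1)$ descends to a linear isomorphism $A^{\dlm}(M^1)\iso A_{\hat\sigma,\hat\sigma}(M^1)$. The two remaining displayed identities, for $a*_{\dlm}m$ and $m*_{\dlm}a$, then say precisely that this map intertwines the left and right actions, where the $A^{\dlm}(V)$-bimodule structure is transported to an $A_{\hat\sigma,\hat\sigma}(V)$-bimodule structure along the algebra isomorphism of the previous paragraph. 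The only genuinely non-formal points are the local-nilpotency argument establishing that $\Delta(1)$ is a well-defined bijection, and the use of this bijectivity to upgrade the one-sided containments coming from the compatibility identities to the equalities $\Delta(1)\p{V\circ_{\dlm}V}=O_{\hat\sigma}(V)$ and $\Delta(1)\p{V\circ_{\dlm}M^1}=O_{\hat\sigma,\hat\sigma}(M^1)$; I expect this last surjectivity step to be the main subtlety, since the identities on their own deliver only the inclusions $\Delta(1)\p{V\circ_{\dlm}V}\subseteq O_{\hat\sigma}(V)$ and $\Delta(1)\p{V\circ_{\dlm}M^1}\subseteq O_{\hat\sigma,\hat\sigma}(M^1)$.
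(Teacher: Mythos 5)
Your proposal is correct and follows essentially the same route as the paper: the paper's entire proof consists of the three displayed $\Delta(1)$-compatibility identities for $\circ_{\te{dlm}}$ and $*_{\te{dlm}}$, from which the isomorphisms are concluded immediately. You have merely made explicit the formal steps the paper leaves implicit — that $\Delta(1)$ is a well-defined bijection (exponential of locally nilpotent positive modes, with $z^{\f14 h_{(0)}}$ trivial at $z=1$), that bijectivity upgrades the inclusion $\Delta(1)\p{V\circ_{\te{dlm}}M^1}\subseteq O_{\hat\sigma,\hat\sigma}(M^1)$ to an equality, and that the unit is preserved — all of which are sound.
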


\begin{ex}
  Note $\Delta(1)h(-1)v=(h-\f23)v$, and $\Delta(1)f(0)v=f(0)v$.
If $M^1=L_{\mathfrak{sl}_2}(-\f43,-\f23)$, we have
\[A_{\hat{\sigma},\hat{\sigma}}(M^1)\cong \C[x,y]/\la y, (x-\f23)x\ra. \]
If $M^1=L_{\s_2}(-\f43,-\f43)$, we have
\[A_{\hat{\sigma},\hat{\sigma}}(M^1)\cong \C[x,y]/\la y, (x-\f23)\ra.\]
Here, we identify $f(0)$ and $h(-1)$ with $y$ and $x$.
\end{ex}

\subsection{Fusion rules among twisted modules}

%Let $\C v_2$ be the lowest weight space of $M_2$. Then we have
%\begin{align*}
%   & A_{\sigma,\sigma}(L_{\s_2}(-\f43,-\f23))\otimes_{A^{\sigma,\sigma}(V)}\C v_2\cong C[x,y]/\la x-j_2, y, (x-\f23)\ra \cong A_{\sigma,\sigma}(L_{\s_2}(-\f43,-\f43))\cong \C v\otimes v_2\\
%\end{align*}
%Since $h*(v\otimes v')=(-\f23+j_2)v\otimes v',$ the fusion rules are
%\begin{align*}
% & L_{\s_2}(-\f23,-\f23)\times \pi_{-\f12}(L_{\s_2}(-\f43,0))=\pi_{-\f12}(L_{\s_2}(-\f23,-\f23))\\
% & L_{\s_2}(-\f23,-\f23)\times
% \pi_{-\f12}(L_{\s_2}(-\f23,-\f23))=\pi_{-\f12}(L_{\s_2}(0,-\f43))\\
% & L_{\s_2}(-\f23,-\f23)\times
% \pi_{-\f12}(L_{\s_2}(0,-\f43))=0.
% \end{align*}
% Similarly, we also have
% \begin{align*}
%   & L_{\s_2}(0,-\f43)\times \pi_{-\f12}(L_{\s_2}(-\f43,0))=\pi_{-\f12}(L_{\s_2}(0,-\f43))\\
%   & L_{\s_2}(0,-\f43)\times \pi_{-\f12}(L_{\s_2}(-\f23,-\f23))=0\\
%   & L_{\s_2}(0,-\f43)\times \pi_{-\f12}(L_{\s_2}(0,-\f43))=0
% \end{align*}
% and
% \begin{align*}
%   & L_{\s_2}(-\f43,0)\times \pi_{-\f12}(L_{\s_2}(-\f43,0))=\pi_{-\f12}(L_{\s_2}(-\f43,0))\\
%   & L_{\s_2}(-\f43,0)\times \pi_{-\f12}(L_{\s_2}(-\f23,-\f23))=\pi_{-\f12}(L_{\s_2}(-\f23,-\f23))\\
%   & L_{\s_2}(-\f43,0)\times \pi_{-\f12}(L_{\s_2}(0,-\f43))=\pi_{-\f12}(L_{\s_2}(0,-\f43))
% \end{align*}

%Let $j_i=-k_i\f23$ $(i=1,2)$ where $k_i=0,1,2$.
%In summary, we have
%\begin{align}\label{twfusl2}
%  L_{\s_2}(-\f43,j_1)\times \pi_{-\f12}(L_{\s_2}(-\f43,j_2))=\pi_{-\f12}(L_{\s_2}(-\f43,j_1+j_2))
%\end{align}
%for $0\leq k_2\leq 2-k_1.$

According to \cite{dong1997vertex},
\[A^{\text{dlm}}(L(\ell,0))\cong \C[x]/\la \prod_{r=0}^{p-2}\prod_{s=0}^{q-1} (x-r+st)\ra.\] Since $\Delta(1)h(-1)v=(h(-1)+\f12\ell)v$, by Proposition \ref{isozhu}, we have
\begin{align}\label{formula59}
	A_{\hat{\sigma}}(L(\ell,0))\cong \C[x]/\la\prod_{r=0}^{p-2}\prod_{s=0}^{q-1}(x+\f12\ell-r+st)\ra.
\end{align}
Similarly, since $\Delta(1)f(0)v=f(0)v$,
$A_{\hat{\sigma},\hat{\sigma}}(L_{\s_2}(\ell,j))$ is isomorphic to the quotient space of $\C[x,y]$ modulo the subspace
\[\C[x,y]y^n+\C[x]g_{j,0}(x,y)+\C[x]g_{j,1}(x,y)+\cdots +\C[x]g_{j,n-1}(x,y)\]
where $g_{j,i}=y^i\prod_{r=0}^{p-n-1}\prod_{s=0}^{q-k}(x+\f12\ell-r-i+st).$
The left and right actions of $A_{\hat{\sigma}}(L(\ell,0))$ is given by (\ref{biact}).
%The rest of the proof follows the same argument as in \cite[Theorem 4.7]{dong1997vertex}.

\begin{thm}
  For admissible weights $j_i=n_i-(k_i-1)t$ $(i=1,2)$, the fusion rules are given as follows:
  \begin{align}\label{twfus}L(\ell,j_1)\times \sigma^{-\f12}(L(\ell,j_2))=\sum_{i=\max\{0,n_1+n_2-p\}}^{\min\{n_1-1,n_2-1\}}\sigma^{-\f12}(L(\ell,j_1+j_2-2i))\end{align}
 if $0\leq k_2-1\leq q-k_1$, and $L(\ell,j_1)\times \sigma^{-\f12}(L(\ell,j_2))=0$ otherwise.
\end{thm}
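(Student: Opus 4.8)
The plan is to compute the fusion number $N^{3}_{1\,2}$ with $M^{1}=L(\ell,j_{1})$ untwisted and $M^{2}=\sigma^{-\f12}(L(\ell,j_{2}))$, $M^{3}=\sigma^{-\f12}(L(\ell,j_{3}))$ both $\hat\sigma$-twisted, by invoking the conjectural twisted Frenkel--Zhu bimodule formula. Since here $g_{1}=\mathrm{id}$ and $g_{2}=\hat\sigma$, so that $g_{1}g_{2}=g_{2}=\hat\sigma$, this gives $N^{3}_{1\,2}=\dim\hom_{A_{\hat\sigma}(V)}(A_{\hat\sigma,\hat\sigma}(M^{1})\otimes_{A_{\hat\sigma}(V)}M^{2}(0),\,M^{3}(0))$. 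The first step is to identify the top spaces $M^{2}(0)$ and $M^{3}(0)$ as one-dimensional modules over the commutative algebra $A_{\hat\sigma}(V)\cong\C[x]/\la\prod_{r,s}(x+\f12\ell-r+st)\ra$ from \eqref{formula59}, on which $x$ acts by the corresponding twisted highest weight; the admissible labels $j_{2},j_{3}$ pin down these eigenvalues.

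The key reduction is Proposition \ref{isozhu}: the map $m\mapsto\Delta(1)m$ furnishes an $A_{\hat\sigma}(V)$-bimodule isomorphism $A^{\te{dlm}}(M^{1})\iso A_{\hat\sigma,\hat\sigma}(M^{1})$ together with the algebra isomorphism $A^{\te{dlm}}(V)\iso A_{\hat\sigma}(V)$. I would use this to transport the entire $\hom$-computation into the Dong--Li--Mason framework for the $\Q$-graded vertex operator algebra $(L(\ell,0),\omega_{z})$ with the shifted conformal vector $\omega_{z}=\omega+\f12z h(-2)\bd{1}$. Under this transport each untwisted summand $L(\ell,j_{1}+j_{2}-2i)$ appearing in the Dong--Li--Mason fusion product is identified, via $\Delta(1)$, with the twisted module $\sigma^{-\f12}(L(\ell,j_{1}+j_{2}-2i))$; this is precisely the mechanism that reproduces \eqref{fusion3} decorated with an overall $\sigma^{-\f12}$.

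With the bimodule realised concretely as the quotient of $\C[x,y]$ by $\C[x,y]y^{n}+\sum_{i}\C[x]g_{j,i}(x,y)$ and bimodule action \eqref{biact}, I would then carry out the dimension count directly. Tensoring over $A_{\hat\sigma}(V)$ forces the right action $f*x=xf$ to match the $M^{2}(0)$-eigenvalue, while the left action $x*f=(x+j_{1}-2y\f{\partial}{\partial y})f$ determines which $M^{3}(0)$-eigenvalues, i.e. which weights $j_{3}=j_{1}+j_{2}-2i$, survive. The $-2y\f{\partial}{\partial y}$ term produces the family indexed by the power of $y=f(0)$, and comparing with the defining polynomials $g_{j,i}=y^{i}\prod_{r=0}^{p-n-1}\prod_{s=0}^{q-k}(x+\f12\ell-r-i+st)$ yields the two combinatorial constraints at once: the horizontal range $\max\{0,n_{1}+n_{2}-p\}\le i\le\min\{n_{1}-1,n_{2}-1\}$ from the $r$-grading, and the vanishing criterion $0\le k_{2}-1\le q-k_{1}$ from the $s$-grading.

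The main obstacle is the bookkeeping in this last step: one must track the twist-induced shift $\f12\ell$ in the eigenvalues all the way through the tensor product and verify that it cancels, so that the horizontal bounds come out \emph{identical} to the untwisted case \eqref{fusion3}, and one must extract the vertical condition $0\le k_{2}-1\le q-k_{1}$ that governs whether the $\hom$-space is nonzero at all. Establishing that these are exactly the constraints dictated by the products $\prod_{s=0}^{q-k}$ in $g_{j,i}$ --- rather than a shifted or prematurely truncated range --- is the delicate point, and it is where the admissibility of the level enters decisively.
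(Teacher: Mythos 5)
Your proposal is correct and follows essentially the same route as the paper: the paper likewise invokes the conjectural twisted Frenkel--Zhu formula, realizes the top space of $\sigma^{-\f12}(L(\ell,j_2))$ as the one-dimensional $A_{\hat\sigma}(L(\ell,0))$-module $\C v_{j_2}'$ with $h$-eigenvalue $j_2-\f12\ell$, computes $A_{\hat{\sigma},\hat{\sigma}}(L(\ell,j_1))\otimes_{A_{\hat{\sigma}}(L(\ell,0))}\C v_{j_2}'\cong\C[x,y]/J$ using the explicit $\C[x,y]$ description of the twisted bimodule obtained via Proposition \ref{isozhu}, and then extracts the ranges and the vanishing criterion by the Dong--Li--Mason Theorem 4.7 argument, exactly as you outline (including the cancellation of the $\f12\ell$ shift).
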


\begin{proof}
  For any admissible weight $j$, let $\C v_j'$ be the one-dimensional module for Lie algebra $\C h$ such that $h v_j' =(j-\f12\ell)v_j'.$ We then calculate the $A(L(\ell,0))$-module $A_{\hat{\sigma},\hat{\sigma}}(L(\ell,j_1))\otimes_{A_{\hat{\sigma}}(L(\ell,0))}\C v_{j_2}'.$ Using the result above this Theorem, we have
  \[A_{\hat{\sigma},\hat{\sigma}}(L(\ell,j_1))\otimes_{A_{\hat{\sigma}}(L(\ell,0))} \C v_{j_2}'\cong \C[x,y]/J\]
  where $J$ is the subspace of $\C[x,y]$ spanned by
  \[\{x-j_2+\f12\ell,\C[x,y]y^{n_1}, g_{j_1,i}(j_2,1)\C[x]y^i,i=0,1,...,n_1-1\}.\]
  Then the result follows from the similar argument of \cite[Theorem 4.7]{dong1997vertex}.
\end{proof}
\begin{ex}
%Let $\C v_2$ be the lowest weight space of $M_2$. Then we have
%\begin{align*}
%   & A_{\hat{\sigma},\hat{\sigma}}(L_{\s_2}(-\f43,-\f23))\otimes_{A_{\hat{\sigma},\hat{\sigma}}(V)}\C v_2\cong C[x,y]/\la x-j_2, y, (x-\f23)\ra \cong A_{\hat{\sigma},\hat{\sigma}}(L_{\s_2}(-\f43,-\f43))\cong \C v\otimes v_2\\
%\end{align*}
%Since $h*(v\otimes v')=(-\f23+j_2)v\otimes v',$

let $j_i=-k_i\f23$ $(i=1,2)$ where $k_i=0,1,2$,
 we have
\begin{align}\label{twfusl2}
  L_{\s_2}(-\f43,j_1)\times \sigma^{-\f12}(L_{\s_2}(-\f43,j_2))=\sigma^{-\f12}(L_{\s_2}(-\f43,j_1+j_2))
\end{align}
for $0\leq k_2\leq 2-k_1.$	
\end{ex}

\subsection{Verlinde formula for $L_{-\frac{4}{3}}(\mathfrak{sl}_{2})$ }

Let $\{\lambda_1:=-\frac{4}{3}\Lambda_0,
\lambda_2:=-\frac{2}{3}\Lambda_0-\frac23\Lambda_1,
\lambda_3:=-\frac{4}{3}\Lambda_1\}$.
The $S$-transformation is given by
 \[\chi_{\lambda_i}(-\frac{1}{\tau})=
 \sum_{j=1}^{3}a_{ij}\sigma^{-\f12}(\chi_{\lambda_j}(\tau))\]
% {\color{red} We need define $\sigma^{-\f12}(\chi_{\lambda_i}(\tau))$.}
\[
  (a_{ij}) =\frac{\sqrt{3}}{3}
  \left( {\begin{array}{ccc}
    -1 & 1 & -1 \\
    1 & -e^{-\frac{2}{3}\pi i}& e^{\frac{2}{3}\pi i} \\
    -1& e^{\frac{2}{3}\pi i}& -e^{-\frac{2}{3}\pi i}\\
  \end{array} } \right)
.\]
%And
%\[(a_{ij})^{-1}=\f{\sqrt{3}}{1+e^{-\f{2i\pi}{3}}-2e^{\f{2i\pi}{3}}}
%\left(
%\begin{array}{ccc}
%-1+e^{\f{2i\pi}{3}} & 0 & e^{\f{2i\pi}{3}}-e^{-\f{2i\pi}{3}} \\
%e^{-\f{2i\pi}{3}}-e^{\f{2i\pi}{3}} & -1+e^{-\f{2i\pi}{3}} & -1+e^{\f{2i\pi}{3}} \\
%0 & -1+e^{-\f{2i\pi}{3}} & -1+e^{-\f{2i\pi}{3}}
%\end{array}
%\right)
%\]
The fusion rules are
\[N_{0i}^j=
\left(
  \begin{array}{ccc}
    1 & 0 & 0 \\
    0 & 1 & 0 \\
    0 & 0 & 1 \\
  \end{array}
\right),\;\;N_{1i}^j=\left(
    \begin{array}{ccc}
      0 & 1 & 0 \\
      0 & 0 & 1 \\
      0 & 0 & 0 \\
    \end{array}
  \right),\;\;
N_{2i}^j=\left(
             \begin{array}{ccc}
               0 & 0 & 1 \\
               0 & 0 & 0 \\
               0 & 0 & 0 \\
             \end{array}
           \right),
\]
while the Verlinde formula and $S$-matrix would give us the following fusion rules:
\[N_{0i}^j=
\left(
  \begin{array}{ccc}
    1 & 0 & 0 \\
    0 & 1 & 0 \\
    0 & 0 & 1 \\
  \end{array}
\right),\;\;N_{1i}^j=\left(
    \begin{array}{ccc}
      0 & 1 & 0 \\
      0 & 0 & 1 \\
      -1 & 0 & 0 \\
    \end{array}
  \right),\;\;
N_{2i}^j=\left(
             \begin{array}{ccc}
               0 & 0 & 1 \\
               -1 & 0 & 0 \\
               0 & -1 & 0 \\
             \end{array}
           \right).
\]

The cause of the negative coefficients in the above fusion rules is explained in \cite{creutzig2013modular}. In order to obtain the positive coefficients, one need include more objects in our category, i.e., {\em atyptical} and {\em typical} admissible modules and take the convergent regions of the characters of these modules into consideration.

%Intuitively, in order to obtain the Verlinde formula one need drop the condition for (\ref{twfusl2}) i.e., $0\leq k_2\leq 2-k_1,$ and identify $L_{\s_2}(-\f43,-2),L_{\s_2}(-\f43,-\f83)$ with $-L_{\s_2}(-\f43,0)$ and $-L_{\s_2}(-\f43,-\f23)$.
%
%Similarly, in order to obtain the Verlinde formula for general $L_{\s_2}(\ell,0)$, we need drop the condition for (\ref{twfus}), and identify $L_{\s_2}(\ell,j)$ with $-L_{\s_2}(\ell,j+p)$.

\section{Twisted Zhu's bimodules of contragredient modules of highest weight modules }\label{section4}
\subsection{Motivation}
Given an admissible irreducible highest weight $L_{\mathfrak{sl}_{2}}(-\frac{4}{3},0)$-module $L_{\mathfrak{sl}_{2}}(-\frac{4}{3},-\frac{4}{3})$, one can obtain the ordinary twisted modules and contragredient modules by taking integer and half-integer spectral flow respectively (Figure \ref{spectraldiagram}). In general, let $\{M_{i}\}$ be the collection of all highest weight modules for $L_{\mathfrak{sl}_{2}}(\ell,0)$ at the admissible level, one can obtain all the $\mathbb{Z}_{2}$-twisted modules of $L_{\mathfrak{sl}_{2}}(\ell,0)$ at the admissible level either from $\{\sigma^{-\frac{1}{2}}(M_{i})\}$ or $\{\sigma^{\frac{1}{2}}(M_{i}^*)\}$.
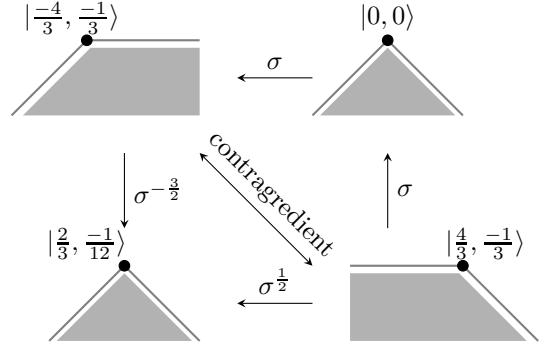
\begin{figure}[h]
\centering
\begin{tikzpicture}
\draw[gray, thick] (0,0) -- (-1,-1);
\draw[gray, thick] (0,0) -- (1.5,0);
\draw[gray,thick]  (4,0) --(3,-1);
\draw[gray,thick]  (4,0) --(5,-1);
\draw[gray,thick](0.5,-3)--(-0.5,-4);
\draw[gray,thick]  (0.5,-3) --(1.5,-4);
\draw[gray,thick]  (5,-3) --
			(3.5,-3);
			\draw[gray,thick]  (5,-3) --
			(6,-4);
			\filldraw [black] (0,0) circle (2pt);
			\filldraw [black] (4,0) circle (2pt);
			\filldraw [black] (0.5,-3) circle (2pt);
			\filldraw [black] (5,-3) circle (2pt);
			\node[] (v1) at (-0.2,0.3) {$|\frac{-4}{3},\frac{-1}{3}\rangle$};
			\node[] (v2) at (4,0.3) {$|0,0\rangle$};
			\node[] (v3) at (0,-2.7) {$|\frac{2}{3},\frac{-1}{12}\rangle$};
			\node[] (v4) at (5.3,-2.7) {$|\frac{4}{3},\frac{-1}{3}\rangle$};
			\draw [stealth-] (2,-0.5) -- node[above]{$\sigma$}(3,-0.5);
			\draw [-stealth] (0.5,-1.5) -- node[right]{$\sigma^{-\frac{3}{2}}$}(0.5,-2.5);
			\draw [stealth-] (2,-3.5) -- node[above]{$\sigma^{\frac{1}{2}}$}(3,-3.5);
			\draw [stealth-] (4,-1.5) -- node[right]{$\sigma$}(4,-2.5);
			\draw [stealth-stealth] (1.5,-1.5) -- node[above ,sloped]{$\mathrm{contragredient}$}(3,-3);
			\fill[fill=gray!60] (0.05,-0.1) -- (-0.85,-1) -- (1.5,-1) -- (1.5,-0.1) -- cycle;
			\fill[fill=gray!60] (4.95,-3.1) -- (3.5,-3.1) -- (3.5,-4) -- (5.85,-4) -- cycle;
			\fill[fill=gray!60] (4,-0.1) -- (3.1,-1) -- (4.9,-1) -- cycle;
			\fill[fill=gray!60] (0.5,-3.1) -- (-0.4,-4) -- (1.4,-4) -- cycle;
\end{tikzpicture}
\caption{The relation among admissible irreducible highest weight $L(-4/3,0)$-modules, contragredient modules and ordinary $\mathbb{Z}_{2}$-twisted modules, where each state labelled by $|\lambda,\Delta\rangle$, its $\mathfrak{sl}_{2}$-weight $\lambda$ and conformal dimension $\Delta$. }\label{spectraldiagram}
\end{figure}
In this Section, we will calculate the twisted Zhu's bimodules of the contragredient modules of highest weight modules and the fusion rules among them. The idea is to use the similar isomorphism in Proposition \ref{isozhu}. To that end, we calculate the untwisted Zhu's bimodule first and fusion rules among untwisted modules.
\subsection{Untwisted Zhu's bimodules}
Let $M$ be a $V$-module. Let $O(M)$ be the linear span of elements of type
\[\res{z}{\p{Y(a,z)\f{(z+1)^{\wt a}}{z^2}v}}.\]
In particular, for $V=L_{\s_2}(-\f43,0)$. The $O(M)$ is spanned by
\begin{align*}
  \res{z}{\p{Y(e,z)\f{(z+1)}{z^{m+1}}}}v &=(e(-m-1)+e(-m))v, \\
  \res{z}{\p{Y(f,z)\f{(z+1)}{z^{m+1}}}}v &=(f(-m-1)+f(-m))v, \\
  \res{z}{\p{Y(h,z)\f{(z+1)}{z^{m+1}}}}v &=(h(-m-1)+h(-m))v, \\
\end{align*}
for any positive integer $m$ and for $v\in M$.

By \cite{FrenkelZhu}, one has the following isomorphism $F$
\begin{align*}
  F:L(j)\otimes U( \mathfrak{g})&\rightarrow A(V_{\mathfrak{g}}(\ell,j)), \\
  F:v\otimes a_1\cdots a_n&\mapsto [a_n(-1)\cdots a_1(-1)v].
\end{align*}
\noindent whose inverse is given by
\begin{align*}
F^{-1}:	A(V_{\mathfrak{g}}(\ell,j))&\rightarrow L(j)\otimes U( \mathfrak{g}),\\
F^{-1}:[a_1(-1-i_1)\cdots a_n(-1-i_n)v]&\rightarrow (-1)^{i_1+\cdots +i_n}v\otimes a_n\cdots a_1,
 \end{align*}
where the tensor products are over $U(\mathfrak{g})$.

 Consider the example of $L_{\mathfrak{sl}_2}(-\f43,0)$. It has three irreducible modules in category $\mathcal{O}$, i.e., $L_{\mathfrak{sl}_2}(-\f43,0), L_{\mathfrak{sl}_2}(-\f43,-\f23)$, $L_{\mathfrak{sl}_2}(-\f43,-\f43)$.
By \cite{malikov1986singular} the maximal submodule of $V_{\mathfrak{sl}_2}(-\f43,-\f23)$ is generated by singular vectors $v_{1}$ and $v_{2}$,
\begin{align}
&v_{1}=\f29e_{(-2)}-\f13e_{(-1)}h_{(-1)}+e_{(-1)}e_{(-1)}f_{(0)}\label{sing2}\\
&v_{2}=-\f{10}{9}f_{(-1)}-\f53h_{(-1)}f_{(0)}+e_{(-1)}f_{(0)}^2,\label{sing4}	
\end{align}
and the maximal submodule of $V_{\mathfrak{sl}_2}(-\f43,-\f43)$ is generated by singular vectors $e_{(-1)}$ and
\begin{align}
&\f{280}{81}f_{(-2)}+\f{70}{27}h_{(-2)}f_{(0)}-\f{10}{9}e_{(-2)}f_{(0)}^2\nonumber\\
&+\f{140}{27}h_{(-1)}f_{(-1)}+\f{35}{9}h_{(-1)}^2f_{(0)}-\f53h_{(-1)}e_{(-1)}f_{(0)}^2\label{sing3}\\
&-\f{70}{9}e_{(-1)}f_{(-1)}f_{(0)}-\f{10}{3}e_{(-1)}h_{(-1)}f_{(0)}^2+e_{(-1)}^2f_{(0)}^3	\nonumber
\end{align}

We next use the singular vectors to compute $A(L_{\s_2}(-\f43,-\f23)), A(L_{\s_2}(-\f43,-\f43)).$
For $A(L_{\s_2}(-\f43,-\f23))$, the preimage of equivalence classes of $v_1$ and $v_2$ are
\begin{align}
& -\f29 v\otimes e-\f13 v\otimes he+fv\otimes e^2, \label{rela1}\\
& -\f{10}{9}v\otimes f-\f53 fv\otimes h+f^2v\otimes e.\label{rela2}
\end{align}
Thus
\[A(L_{\s_2}(-\f43,-\f23))=(L(-\f23)\otimes U(\s_2))/I_1, \]
where $I_1$ is generated by (\ref{rela1}) and (\ref{rela2}), and the tensor products are over $U(\mathfrak{g})$.
For $A(L_{\s_2}(-\f43,-\f43))$, the preimage of equivalence classes of (\ref{sing3}) is
\begin{align*}
  -\f{280}{81}v\otimes f-\f{70}{27}fv\otimes h+\f{10}{9}f^2v\otimes e&+\f{140}{27}v\otimes hf+\f{35}{9}fv\otimes h^2\\ &-\f53f^2v\otimes eh-\f{70}{9}fv\otimes fe-\f{10}{3}f^2v\otimes he+f^3v\otimes e^2.
\end{align*}

Denote by $L(-\f23)^*$ and $L(-\f43)^*$ the dual of highest weight $\s_2$-modules with weights $-\f23$ and $-\f43$.
We now consider the $A(L_{\s_2}(-\f43,0))$-module $L(-\f23)^*\otimes A(L_{\s_2}(-\f43,-\f23)) $.
Let $v'$ be the lowest weight vector of $L(-\f23)^*$. Then $I_1\otimes v'\cong \la -\f{10}{9}v\otimes ev'+fv\otimes e^2v', -\f{10}{9}fv\otimes v'+f^2v\otimes ev'\ra.$
It is isomorphic to \[A(L_{\s_2}(-\f43,-\f23))\otimes L(-\f23)^*\cong \mathbb{C} (v\otimes v').\]
Thus we have
\[L_{\s_2}(-\f43,-\f23)\times \p{L_{\s_2}(-\f43,-\f23)}^*=L_{\s_2}(-\f43,0). \]
Similarly, we also have
\begin{align}
& L_{\s_2}(-\f43,-\f43)\times \p{L_{\s_2}(-\f43,-\f43)}^*=L_{\s_2}(-\f43,0),\\
& L_{\s_2}(-\f43,-\f23)\times \p{L_{\s_2}(-\f43,-\f43)}^*=\p{L_{\s_2}(-\f43,-\f23)}^*,\\
& L_{\s_2}(-\f43,-\f43)\times \p{L_{\s_2}(-\f43,-\f23)}^*=L_{\s_2}(-\f43,-\f23),
\end{align}

\subsection{The contragredient modules of the highest weight modules}
As one can see from above example, directly computing Zhu's bimodules depends on the explicit form of singular vectors. In practice, it is extremely tedious to convert the singular vectors given in \cite{malikov1986singular} into their normal forms. It was noted in \cite{dong1997vertex} that the fusion rules among the admissible modules remain the same after a shift of the conformal vector.  By a proper shift of the conformal vector, there are nice and compact projection formulas for singular vectors (\cite{fuchs1989two}, \cite{dong1997vertex}) which can help us to compute fusion rules avoiding finding explicit form of singular vectors.

Since
\begin{align}
   & [f(0),e(0)^{\gamma}]=-(\gamma e(0)^{\gamma-1}+\gamma(\gamma-1)e(0)^{\gamma-1})\label{rel1},\\
   & [h(0),e(0)^{\gamma}]=2\gamma e(0)^{\gamma}\label{rel2},
\end{align}
then by using the similar argument as in \cite{malikov1986singular} we have
\begin{prop}
  Let $j=n-1-(k-1)$ where $n$ and $k$ are positive integers satisfying $1\leq n\leq p-1$, $1\leq k\leq q$ and let $v$ be a highest weight vector of the Verma module $(M(\ell,j))^*$. Set
  \begin{align*}
     & E_1(n,k)=e(0)^{n+(k-1)t}f(-1)^{n+(k-2)t}e(0)^{n+(k-3)t}f(-1)^{n+(k-4)t}
     \cdots f(-1)^{n-(k-2)t}e(0)^{n-(k-1)t},\\
     &
     E_2(n,k)=f(-1)^{p-n+(q-k)t}e(0)^{p-n+(q-k-1)t}f(-1)^{p-n+(q-k-2)t}e(0)^{p-n+(q-k-3)t}\\
     &\quad \cdots e(0)^{p-n-(q-k+1)t}f(-1)^{p-n-(q-k)t}.
  \end{align*}

\noindent Then $v_{-j,1}=E_1(n,k)v,v_{-j,2}=E_2(n,k)v$ are singular vectors of $(M(\ell,j))^*$.

\end{prop}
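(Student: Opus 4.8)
The plan is to adapt the Malikov--Feigin--Fuks (MFF) construction \cite{malikov1986singular} of Verma-module singular vectors to the contragredient (``twisted Borel'') setting, which is exactly what the projection formulas of \cite{fuchs1989two, dong1997vertex} require. First I would fix the two $\mathfrak{sl}_2$-triples governing $(M(\ell,j))^*$. Because the two generators occurring in $E_1$ and $E_2$ are $e(0)$ and $f(-1)$, these are the two ``lowering'' root vectors of the relevant triangular decomposition: $e(0)$ sits in the triple $(f(0),-h(0),e(0))$ (finite simple root), while $f(-1)$ sits in the triple $(e(1),h(0)+K,f(-1))$ (affine simple root), as one checks from $[f(0),e(0)]=-h(0)$ and $[e(1),f(-1)]=h(0)+K$. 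A vector is singular precisely when it is annihilated by the two raising operators $f(0)$ and $e(1)$, so it suffices to verify annihilation by these two.

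Second, I would reproduce the alternating product and its exponents from the affine Weyl linkage. Starting from $\mu_0=j$ and applying the two simple reflections (shifted by $\rho$) alternately, $\mu_0 \to s_1\!\cdot\!\mu_0 \to s_0 s_1\!\cdot\!\mu_0 \to \cdots$, the exponent produced at each step equals the pairing of the current $(\mu+\rho)$ with the coroot of the root vector being applied; with $j=n-1-(k-1)t$ and $t=p/q$ this yields exactly the arithmetic progressions $n+(k-1)t,\,n+(k-2)t,\dots,n-(k-1)t$ for $E_1(n,k)$ (which begins and ends with $e(0)$, giving $2k-1$ factors) and $p-n+(q-k)t,\dots,p-n-(q-k)t$ for $E_2(n,k)$ (which begins and ends with $f(-1)$, giving $2(q-k)+1$ factors). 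The two families arise from the two choices of which simple reflection to apply first, matching the known two-generator submodule structure of admissible Verma modules. The heart of the verification is an inductive ``intertwining'' argument: using \eqref{rel1}, \eqref{rel2} together with the analogous complex-power relations for the triple $(e(1),h(0)+K,f(-1))$, one shows that $e(0)^{a_1}v$ is a highest-weight vector for the \emph{second} triple, that applying $f(-1)^{a_2}$ with the prescribed exponent again produces a highest-weight vector for the first triple, and so on; at each stage the raising operator is absorbed by the $\mathfrak{sl}_2$ identity $\mathsf{e}\,\mathsf{f}^{a}=\mathsf{f}^{a}\mathsf{e}+a\,\mathsf{f}^{a-1}(\mathsf{h}-a+1)$ because the exponent $a$ equals the relevant coroot value on the intermediate weight.

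The step I expect to be the main obstacle is making rigorous sense of the \emph{fractional} powers $e(0)^{n+(k-1)t}$ and $f(-1)^{p-n+(q-k)t}$, and proving that the formal alternating product is a genuine element of $(M(\ell,j))^*$, i.e.\ an honest singular vector rather than a merely formal symbol. The admissibility constraint $j=n-1-(k-1)t$ with $\gcd(p,q)=1$ is tuned precisely so that the total weight shift effected by $E_1$ and $E_2$ lies in the integral root lattice, even though the individual exponents are non-integral; thus the fractional parts of adjacent factors must cancel. I would handle this by the standard MFF device: treat all exponents as independent formal complex parameters, verify the annihilation identities as rational-function identities in those parameters (where the complex-power relations \eqref{rel1}, \eqref{rel2} hold identically), and then specialize to the admissible values, using $1\le n\le p-1$ and $1\le k\le q$ to guarantee that every exponent stays nonnegative and that no denominator vanishes under specialization. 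Equivalently, one repeatedly rewrites neighbouring fractional powers through the commutation relations so that the product collapses onto an element of $U(\mathfrak{n}_-)v$.

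Finally, I would remark that passing from the Verma module $M(\ell,j)$ to its contragredient only interchanges the roles of the two Borel subalgebras, so the MFF computation transfers after the substitutions $f_1\leadsto e(0)$, $f_0\leadsto f(-1)$; this is why the projection-formula shape of $E_1,E_2$ coincides with the untwisted MFF formulas up to relabelling, and is precisely what makes the conformal-vector shift of \cite{dong1997vertex} render these singular vectors compact enough to feed into the subsequent Zhu-bimodule fusion-rule computation.
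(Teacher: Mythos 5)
Your proposal is correct and is essentially the paper's own argument: the paper proves this proposition precisely by recording the complex-power relations \eqref{rel1}--\eqref{rel2} and then invoking the Malikov--Feigin--Fuks construction with the roles of $e$ and $f$ interchanged, which is exactly the transfer-to-the-contragredient-Borel strategy you describe. Your write-up merely supplies the standard MFF details (the two $\mathfrak{sl}_2$-triples, annihilation by $f(0)$ and $e(1)$, and the formal treatment of fractional exponents) that the paper leaves to the citation of \cite{malikov1986singular}.
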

\noindent Basically, we just interchange $e$ and $f$ based on the corresponding result in the case of highest weight modules.

Next, we consider the projection formula. Let $P_1$ be the projection $\hat{\mathfrak{g}}$ onto $\mathfrak{g}$ such that $P_1(a\otimes t^n)=a$ for any $a\in \mathfrak{g}$ and $P_1(c)=0$. Let $H_\al=fe-\al h-\al(\al+1)$.
\begin{prop}\cite{malikov1986singular}
  The following projection formulas hold:
  \begin{align*}
     & P_1(E_1(n,k))=\p{\prod_{r=1}^{n}\prod_{s=1}^{k-1}H_{-r-st}}e^n \\
     &
     P_1(E_2(n,k))=\p{\prod_{r=0}^{p-n-1}\prod_{s=1}^{q-k}H_{r+st}}f^{p-n}.
  \end{align*}
\end{prop}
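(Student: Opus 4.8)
The plan is to reduce the affine statement to a finite-dimensional identity in (a completion of) $U(\s_2)$ and then prove the latter by a Malikov--Feigin--Fuks style computation. First I would observe that $P_1$ is not merely a linear projection but a Lie algebra homomorphism $\hat{\s_2}\to\s_2$: in the affine bracket $[a\otimes t^m,b\otimes t^n]=[a,b]\otimes t^{m+n}+m\la a,b\ra\delta_{m+n,0}c$ the cocycle term lies in $\C c$, which $P_1$ annihilates, so $P_1$ respects brackets and extends to an algebra homomorphism $U(\hat{\s_2})\to U(\s_2)$. Applying it termwise collapses every mode index, giving
\[
P_1(E_1(n,k))=e^{\,n+(k-1)t}f^{\,n+(k-2)t}e^{\,n+(k-3)t}\cdots f^{\,n-(k-2)t}e^{\,n-(k-1)t},
\]
an alternating ``staircase'' of (generically non-integral) powers of $e$ and $f$ whose exponents descend by $t$ at each step. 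The proposition thus becomes the assertion that this staircase equals $\p{\prod_{r=1}^{n}\prod_{s=1}^{k-1}H_{-r-st}}e^n$; the $E_2$ formula is proved by the same argument with the roles of $e$ and $f$ interchanged and the exponents built from $p-n$ and $q-k$, so it suffices to treat $E_1$.

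The engine of the computation is a pair of intertwining relations that follow from (\ref{rel1}) and (\ref{rel2}) (and their $e\leftrightarrow f$ analogues) after passing to the finite algebra. A direct calculation yields
\[
eH_\al=H_{\al-1}e,\qquad fH_\al=H_{\al+1}f,\qquad [H_\al,H_\be]=0,
\]
the last because $[fe,h]=0$; consequently the $H_\al$ commute and may be freely reordered, which is what makes the double product well defined. Since $t=p/q$ the exponents are not integers, so these powers are to be understood by analytic continuation: the relations (\ref{rel1}), (\ref{rel2}) are polynomial in the exponent $\gamma$, hence they and their consequences continue from integer exponents to generic complex ones, and I would carry out all manipulations treating $\gamma$ as a formal parameter.

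With these tools I would prove the staircase identity by induction. The model computation is $f^m e^m = H_{m-1}H_{m-2}\cdots H_0$, which follows at once from $fe=H_0$ together with $fH_\al=H_{\al+1}f$; its analytic continuation governs how a matched pair of $e,f$ powers converts into a block of $H_\al$'s. One then transports the operators $H_\al$ produced in the inner part of the staircase outward through the surrounding powers of $e$ and $f$ using $eH_\al=H_{\al-1}e$ and $fH_\al=H_{\al+1}f$, accumulating one factor $\prod_{r=1}^n H_{-r-st}$ for each of the $k-1$ interior $f$-blocks, with the base case $k=1$ being simply $P_1(e(0)^n)=e^n$.

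The main obstacle is precisely this combinatorial reorganization in the inductive step. Because the staircase strictly alternates $e,f,e,\dots$ while the target is a product over the rectangle $\{1\le r\le n\}\times\{1\le s\le k-1\}$, the recursion does not nest in an obvious way; the delicate point is to verify, keeping $n$ integral but the intermediate exponents complex, that shuffling the commuting $H_\al$ through the non-integral powers of $e$ and $f$ reproduces exactly the shifts $-r-st$ with the correct multiplicities. Making the analytic continuation rigorous, so that the integer-exponent identities (\ref{rel1}), (\ref{rel2}) may legitimately be invoked at $\gamma\in n+t\Z$, is the second point requiring care, whereas the reduction through $P_1$ and the derivation of the intertwining relations are routine.
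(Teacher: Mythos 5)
Your setup is the right one, and it is essentially the only argument on offer anywhere: the paper itself gives \emph{no} proof of this Proposition, it simply cites \cite{malikov1986singular} (having remarked just before that one interchanges $e$ and $f$ relative to the highest-weight case), and what you outline is exactly the Malikov--Feigin--Fuks complex-power calculus. Your three pillars are all correct: $P_1$ is evaluation at $t=1$ composed with killing the center, hence a Lie algebra homomorphism, so it collapses $E_1(n,k)$ termwise to the staircase $e^{n+(k-1)t}f^{n+(k-2)t}\cdots e^{n-(k-1)t}$; the relations $eH_\alpha=H_{\alpha-1}e$, $fH_\alpha=H_{\alpha+1}f$, $[H_\alpha,H_\beta]=0$ hold and extend to complex powers as $e^{\gamma}H_\alpha=H_{\alpha-\gamma}e^{\gamma}$, $f^{\gamma}H_\alpha=H_{\alpha+\gamma}f^{\gamma}$; and $f^{m}e^{m}=H_{m-1}\cdots H_{0}$ for $m\in\Z_{\geq 0}$.

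The genuine gap is the inductive step, which you flag as ``the main obstacle'' and leave unexecuted, and the mechanism you propose for it would fail as stated. There is no ``analytic continuation'' of $f^{m}e^{m}=H_{m-1}\cdots H_{0}$ to non-integral $m$: the right-hand side is a product of $m$ factors and does not continue in $m$, while in the staircase only the central factor has integral exponent -- every other block carries an exponent $n+jt$ with $j\neq 0$ -- so the ``matched pairs'' your mechanism would convert are precisely the ones for which the identity is meaningless. The missing idea is that the conversion must only ever be performed at the integral exponent $n$, and that the recursion does nest once you run a simultaneous induction on the staircase $S(n,k)$ and its $e\leftrightarrow f$-swap $\bar{S}(n,k)$. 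Concretely, write $S(n,k)=e^{n+(k-1)t}\,\bar{S}(n,k-1)\,e^{n-(k-1)t}$; the swapped induction hypothesis gives $\bar{S}(n,k-1)=\bigl(\prod_{r=1}^{n}\prod_{s=1}^{k-2}\bar{H}_{-r-st}\bigr)f^{n}$ with $\bar{H}_\alpha:=ef+\alpha h-\alpha(\alpha+1)$. Now split $e^{n-(k-1)t}=e^{n}e^{-(k-1)t}$, convert the genuinely integral pair $f^{n}e^{n}=H_{n-1}\cdots H_{0}$, convert the swapped factors by the identity $\bar{H}_\alpha=H_{-1-\alpha}$ (immediate from $ef=fe+h$), and transport all of these commuting quadratic elements to the left through $e^{n+(k-1)t}$ via $e^{\gamma}H_\alpha=H_{\alpha-\gamma}e^{\gamma}$: the bookkeeping turns the inner factors into the rows $s=1,\dots,k-2$ of the rectangle, the new block into the row $\prod_{r=1}^{n}H_{-r-(k-1)t}$, and $e^{n+(k-1)t}e^{-(k-1)t}$ recombines to $e^{n}$, completing the induction (the $E_2$ formula then follows by the same induction with $e$ and $f$ interchanged, as you say). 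Without this swap-and-convert device -- in particular the relation $\bar{H}_\alpha=H_{-1-\alpha}$, which appears nowhere in your outline -- shuffling $H$'s through non-integral powers cannot produce the shifts $-r-st$ with the right multiplicities, so as written the proof is incomplete.
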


Define subalgebra
\[T_-=\C e+t^{-1}\C[x^{-1}]\otimes \mathfrak{g}.\]
 Let $B_0=\C(t^{-1}+1)\otimes e+(x^{-2}+x^{-1})\C[x^{-1}]\otimes \mathfrak{g}.$
Since $B_0$ is an ideal of $N_-$, $U(N_-)B_0=B_0U(N_-)$ is an ideal of $U(N_-)$. Set $L_0=N_-/B_0$. Define
\[T_+=e(0)+B_0, T_-=f(-1)+B_0, T_0=h(-1)+B_0.\] They obey the following $\s_2$-relationships
\[[T_0,T_+]=-2T_+,\;\; [T_0,T_-]=2T_-,\;\;[T_+,T_-]=T_0.\]

\noindent   Define $G_\al=T_-T_+-\al T_0+\al(\al+1)$.
They satisfy the following relationships
\begin{align*}
   & G_\al G_\be=G_\be G_\al,\ju T_+^mG_\al=G_{\al-m}T_+^m,\ju T_-^m G_\al=G_{\al+m}T_-^m, \\
   & T_-^m T_+^m=G_0G_1\cdots G_{m-1},\ju T_+^mT_-^m=G_{-1}G_{-2}\cdots G_{-m},
\end{align*}
for any complex numbers $\al, \be$ and for any positive integer $m$.

Let $P$ be the natural quotient map from $U(N_-)$ onto $U(L_0)$.
Using the similar method as suggested in \cite{malikov1986singular} we obtain
\begin{prop}
  The following formulas hold:
  \begin{align*}
     & P(E_1(n,k))=\p{\prod_{r=1}^{n}\prod_{s=1}^{k-1}G_{-r-st}}T_+^n \\
     & P(E_2(n,k))=\p{\prod_{r=0}^{p-n-1}\prod_{s=1}^{q-k}G_{r+st}}T_-^{p-n}.
  \end{align*}
\end{prop}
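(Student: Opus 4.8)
The plan is to transport the Malikov–Feigin–Fuchs reduction of the preceding proposition from $U(\mathfrak{g})$ to $U(L_0)$. The guiding point is that $(T_+,T_-,T_0)$ obeys the $\s_2$-relations and that the family $G_\al$ satisfies, with respect to $T_\pm$, exactly the identities recorded just above the statement: $G_\al G_\be=G_\be G_\al$, $T_+^m G_\al=G_{\al-m}T_+^m$, $T_-^m G_\al=G_{\al+m}T_-^m$, together with $T_-^m T_+^m=G_0\cdots G_{m-1}$ and $T_+^m T_-^m=G_{-1}\cdots G_{-m}$. These are precisely the structural relations driving the computation of $P_1(E_i)$, now with $G_\al$ in the role of $H_\al$. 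Since $P$ is the algebra map with $P(e(0))=T_+$ and $P(f(-1))=T_-$, applying $P$ turns $E_1(n,k)$ and $E_2(n,k)$ into ordered products of (complex) powers of $T_+$ and $T_-$, and I would then run the same reduction to collapse them, with $e^n,f^{p-n}$ becoming $T_+^n,T_-^{p-n}$ and $H_{-r-st},H_{r+st}$ becoming $G_{-r-st},G_{r+st}$.

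Concretely, I would prove both formulas at once by a single induction on the number of alternations. Introduce the two ``flavored'' products $A(a,c)=T_+^{a+(c-1)t}T_-^{a+(c-2)t}\cdots T_+^{a-(c-1)t}$ and its mirror image $B(a,c)$ obtained by exchanging $T_+\leftrightarrow T_-$, so that $P(E_1(n,k))=A(n,k)$ and $P(E_2(n,k))=B(p-n,q-k+1)$. The claim is that $A(a,c)=\p{\prod_{r=1}^{a}\prod_{s=1}^{c-1}G_{-r-st}}T_+^a$ and $B(a,c)=\p{\prod_{r=1}^{a}\prod_{s=1}^{c-1}G_{r-1+st}}T_-^a$; specializing and reindexing recovers the two displayed identities. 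The base case $c=1$ is immediate, both sides being $T_+^a$ (resp.\ $T_-^a$) with an empty $G$-product.

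For the inductive step on $A(a,c)$ I would peel off the outermost factors, writing $A(a,c)=T_+^{a+(c-1)t}\,B(a,c-1)\,T_+^{a-(c-1)t}$, since the inner $(2c-3)$-factor block is exactly the $T_-$-flavored product. Substituting the inductive formula for $B(a,c-1)$, I would then (i) move the outer $T_+^{a+(c-1)t}$ to the right past the $G$-product using $T_+^m G_\al=G_{\al-m}T_+^m$; (ii) write $T_+^{a+(c-1)t}T_-^a=T_+^{(c-1)t}\p{T_+^aT_-^a}=T_+^{(c-1)t}G_{-1}\cdots G_{-a}$ via the \emph{integer} cancellation $T_+^aT_-^a=G_{-1}\cdots G_{-a}$; (iii) commute the fractional power $T_+^{(c-1)t}$ rightward past these new $G$-factors and recombine $T_+^{(c-1)t}T_+^{a-(c-1)t}=T_+^a$. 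A short reindexing shows that the commuted old columns reproduce $\prod_{r=1}^{a}\prod_{s=1}^{c-2}G_{-r-st}$ while the cancellation deposits the new column $\prod_{r=1}^{a}G_{-r-(c-1)t}$, advancing the double product from $s\le c-2$ to $s\le c-1$. The step for $B(a,c)$ is the same after interchanging $T_+\leftrightarrow T_-$ (and using $T_-^aT_+^a=G_0\cdots G_{a-1}$), which is why the two flavors must be carried through the induction together.

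The main obstacle is the rigorous treatment of the \emph{complex} exponents $a\pm st$. The individual factors such as $T_+^{a+(c-1)t}$ do not lie in $U(L_0)$, and the commutation rules $T_\pm^m G_\al=G_{\al\mp m}T_\pm^m$ are stated only for positive integers $m$, whereas the reduction applies them with $m$ a fractional multiple of $t$. I would handle this exactly as in \cite{malikov1986singular}: pass to the localization of $U(L_0)$ in which complex powers $T_\pm^{\mu}$ are adjoined with $T_\pm^{\mu}T_\pm^{\nu}=T_\pm^{\mu+\nu}$, observe that $\mathrm{Ad}(T_+)$ sends the polynomial family $G_\al\mapsto G_{\al-1}$ so that $\mathrm{Ad}(T_+^{\mu})$ continues analytically to the shift $G_\al\mapsto G_{\al-\mu}$, and check that $P$ extends to an algebra homomorphism on this localization. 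The induction is arranged so that every genuine cancellation $T_\pm^m T_\mp^m$ occurs at the \emph{integer} exponent $m=a$ (or $m=p-n$), while the fractional powers only ever commute past $G$-factors or recombine among themselves; this is what guarantees that the final expression returns to $U(L_0)$ as a genuine integer-power element equal to the stated product.
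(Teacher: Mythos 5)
Your proof is correct and takes essentially the same approach as the paper, which offers no details beyond invoking "the similar method as suggested in" \cite{malikov1986singular}: your paired induction on the number of alternations, with every genuine cancellation $T_{\pm}^{a}T_{\mp}^{a}$ occurring at an integer exponent and the fractional powers handled by Malikov--Feigin--Fuchs-style localization and analytic continuation of $\mathrm{Ad}(T_{\pm}^{\mu})$, is precisely a careful implementation of that cited method. One minor remark: the penultimate factor of $E_2(n,k)$ as displayed in the paper should read $e(0)^{p-n-(q-k-1)t}$ rather than $e(0)^{p-n-(q-k+1)t}$, i.e.\ the descending-by-$t$ pattern that your mirror product $B(p-n,q-k+1)$ correctly uses.
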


\subsection{Fusion rules}
For the contragredient modules of the highest weight modules, we choose the new conformal vector $\omega_z=\omega-\f12zh(-2)\bf{1}$, where $0<z<1$.

Let $M$ be any weak $V(\ell,\C)$-module. Since
\[\wt h(-1)=1,\;\; \wt e(-1)=1+z,\;\;\wt f(-1)=1-z, \]
we have
\begin{align*}
  \res{z}{\f{(1+z)^{[\wt f]}}{z^m}Y(f,z)u}&=f(-m)u \\
   \res{z}{\f{(1+z)^{[\wt e]}}{z^m}Y(e,z)u} &=(e(-m)+e(1-m))u,\\
   \res{z}{\f{(1+z)^{\wt h}}{z^{m+1}}Y(h,z)u}&= (h(-m-1)+h(-m))u
\end{align*}

\noindent for any positive integer $m$ and for $u\in M$.

\begin{prop}\label{bimod}
  Let $j=n-1-(k-1)t$ be an admissible weight. Then the $A(L(\ell,0))$-bimodule $A((L(\ell,j))^*)$ is isomorphic to the quotient space of $\C[x,z]$ modulo the subspace
  \[\C[x,z]z^n+\C[x]f'_{j,0}(x,z)+\C[x]f'_{j,1}(x,z)+\cdots+\C[x]f'_{j,n-1}(x,z)\]
where $f'_{j,i}(x,z)=z^{i}\prod_{r=0}^{p-n-1}\prod_{s=0}^{q-k}(x+r+i-st).$ The left and right actions of $A(L(\ell,0))$ on $A(L(\ell,j)^*)$ are given by (\ref{biact}).
\end{prop}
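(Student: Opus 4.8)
The argument runs in close parallel to the computation of $A(L(\ell,j))$ for highest weight modules in \cite{dong1997vertex}, with the roles of $e$ and $f$ interchanged and with the shifted conformal vector $\omega_z$ in force. The plan is to first present the bimodule attached to the contragredient Verma module $(M(\ell,j))^*$ as a polynomial ring, and then to quotient it by the classes of the two singular vectors $v_{-j,1}=E_1(n,k)v$ and $v_{-j,2}=E_2(n,k)v$ that generate its maximal submodule.

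First I would establish $A((M(\ell,j))^*)\cong\C[x,z]$, with $x=[h(-1)]$ and $z=[e(0)]$. Adapting the Frenkel--Zhu isomorphism $F$ to the lowest weight module $(M(\ell,j))^*$ and invoking the description of $O((M(\ell,j))^*)$ recorded above --- namely that $f(-m)u$, $(e(-m)+e(1-m))u$ and $(h(-m-1)+h(-m))u$ all lie in $O((M(\ell,j))^*)$ for $m\geq 1$ --- one sees that the negative modes of $f$ vanish while those of $e$ and $h$ collapse onto $[e(0)]$ and $[h(-1)]$, leaving exactly the two commuting generators $x,z$. The right action of $A(L(\ell,0))$ is then multiplication by $x$, while the left action acquires the weight correction recorded in (\ref{biact}), as forced by the commutators (\ref{rel1}) and (\ref{rel2}) applied to the powers $z^i=[e(0)^i]$.

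Next I would pass to $(L(\ell,j))^*=(M(\ell,j))^*/\langle v_{-j,1},v_{-j,2}\rangle$, so that $A((L(\ell,j))^*)$ becomes the quotient of $\C[x,z]$ by the sub-bimodule generated by the images of $v_{-j,1}$ and $v_{-j,2}$. To make these images explicit I would feed the projection formulas
\[P(E_1(n,k))=\p{\prod_{r=1}^{n}\prod_{s=1}^{k-1}G_{-r-st}}T_+^n,\qquad P(E_2(n,k))=\p{\prod_{r=0}^{p-n-1}\prod_{s=1}^{q-k}G_{r+st}}T_-^{p-n}\]
into the realization of $U(L_0)$ on $\C[x,z]$, where $T_+=[e(0)]$ raises the power of $z$, $T_0=[h(-1)]$ acts as $x$, and $G_\al=T_-T_+-\al T_0+\al(\al+1)$ translates into a linear factor in $x$. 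Using the braiding identities $T_+^mG_\al=G_{\al-m}T_+^m$ and $T_-^mG_\al=G_{\al+m}T_-^m$ together with $T_-^mT_+^m=G_0G_1\cdots G_{m-1}$, the first singular vector yields the relation generating $\C[x,z]z^n$, while the second, after the shifts $\al\mapsto\al+i$ induced by extracting the $z^i$-component, should produce the generators $f'_{j,i}(x,z)=z^i\prod_{r=0}^{p-n-1}\prod_{s=0}^{q-k}(x+r+i-st)$ for $i=0,\dots,n-1$.

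The hard part will be this final bookkeeping step. One must verify that the reindexing of the double products of $G$-factors, the extra factors $G_0,\dots,G_{m-1}$ produced by $T_\pm^m$ (these are what supply the $s=0$ row of $f'_{j,i}$, which is absent from the projection formula), and the weight shift carried by the left action combine to reproduce exactly the arithmetic progressions $x+r+i-st$ over the ranges $0\le r\le p-n-1$, $0\le s\le q-k$, and that the two singular-vector ideals together cut out the stated subspace and nothing more. Once this matching is carried out as in \cite{dong1997vertex,malikov1986singular}, the bimodule isomorphism and the form (\ref{biact}) of the actions follow; this untwisted result is then precisely the input needed to transport the computation to the $\hat{\sigma}$-twisted contragredient modules via the isomorphism of Proposition \ref{isozhu}.
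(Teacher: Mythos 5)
Your outline follows the paper's own proof almost step for step: the paper also presents $A((M(\ell,j))^*)$ as a polynomial ring --- concretely as $U(L_0)/T_-U(L_0)$ with $T_0=h(-1)+B_0$ in the role of $x$ and $T_+=e(0)+B_0$ in the role of $z$ --- and then cuts by the images of $v_{-j,1},v_{-j,2}$ computed through the projection formulas and the $G_\alpha$ calculus. Your observation that the factors produced by moving $T_+^{p-n}$ past $T_-^{p-n}$ (namely $G_{-1}\cdots G_{-(p-n)}$, via the identity $T_+^mT_-^m=G_{-1}\cdots G_{-m}$) are what supply the missing $s=0$ row of $f'_{j,i}$ is exactly what happens in the paper's computation of $T_-^aT_0^bT_+^dP(E_2(n,k))$.

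However, the step you defer as ``final bookkeeping'' is where the one essential idea lives, and the specific claim you make in its place is false: the first singular vector does \emph{not} by itself yield the relation generating $\C[x,z]z^n$ (except in the degenerate case $k=1$). Modulo $T_-U(L_0)$, the sub-bimodule generated by $P(E_1(n,k))$ is
\begin{equation*}
\sum_{d=0}^{\infty}\C[x]\,g_d(x)\,z^{n+d},\qquad g_d(x)=\prod_{r=0}^{n-1}\prod_{s=1}^{k-1}(x+r+st+d),
\end{equation*}
whose coefficients $g_d$ are non-unit polynomials, so this is strictly smaller than $\C[x,z]z^n$. In the paper, $\C[x]z^{n+d}$ for every $d\geq 0$ is obtained only by combining this with the contribution of the second singular vector, $\sum_{m\geq 0}\C[x]\,h_m(x)\,z^{m}$ with $h_m(x)=\prod_{r=0}^{p-n-1}\prod_{s=0}^{q-k}(x+r+m-st)$, and invoking that $g_d$ and $h_{n+d}$ are relatively prime: a common root would force $(s+s')t\in\Z$ with $1\leq s+s'\leq q-1$, which the admissibility of the level rules out; Bezout then gives $z^{n+d}\in\C[x]g_d(x)z^{n+d}+\C[x]h_{n+d}(x)z^{n+d}$. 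This coprimality argument is precisely what shows the two singular-vector ideals together cut out $\C[x,z]z^n+\sum_{i=0}^{n-1}\C[x]f'_{j,i}(x,z)$ and nothing less. Without it your plan cannot close the containment in either direction; with it, your argument becomes the paper's proof.
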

\begin{proof}
  First, $(M(\ell,j))^*\cong U(N_-)$ as a vector space.

  We have
  \[O((M(\ell,j))^*)\cong f(-1)U(N_-)+B_0U(N_-).\]

Recall $v_{-j,1},v_{-j,2}$ are two singular vectors of $(M(\ell,j))^*$. Then we have
\begin{align*}
   & A((L(\ell,j))^*)\cong (M(\ell,j))^*/(O((M(\ell,j))^*+U(N_-)v_{-j,1}+U(N_-)v_{-j,2})\\
    & \cong U(N_-)/(B_0U(N_-)+f(-1)U(N_-)+U(N_-)E_1(n,k)+U(N_-)E_2(n,k))
\end{align*}
as $A((L(\ell,0))^*)$-bimodules. Note that $U(N_-)/B_0U(N_-)\cong U(L_0)$. Thus
\[A((L(\ell,j))^*)\cong U(L_0)/(U(L_0)P(E_1(n,k))+U(L_0)P(E_2(n,k))+T_-U(L_0)).\]

For any nonnegative integers $a,b,d,$ using above relationships, we have (See Appendix B for the detail)
\begin{align}
\label{appen1}
\begin{split}
&\ju T_-^a T_0^b T_+^d P(E_1(n,k)) \\
& =
   T_-^a\p{\prod_{r=1}^{n}\prod_{s=1}^{k-1}(r+st+d)(T_0+r+st+d-1)}T_0^bT_+^{n+d} \ju \mod T_-U(L_0).	
\end{split}
\end{align}

%\begin{align*}
%   &\ju T_-^a T_0^b T_+^d P(E_1(n,k)) \\
%   & =  T_-^a T_0^b T_+^d \p{\prod_{r=1}^{n}\prod_{s=1}^{k-1}G_{-r-st}}T_+^n\\
%   & = T_-^a\p{\prod_{r=1}^{n}\prod_{s=1}^{k-1}G_{-r-st-d}}T_0^b T_+^{d+n}\\
%   & =
%   T_-^a\p{\prod_{r=1}^{n}\prod_{s=1}^{k-1}(T_-T_+-(-r-st-d)T_0+(-r-st-d)(-r-st-d+1))}T_0^b T_+^{d+n}\\
%   & =
%   T_-^a\p{\prod_{r=1}^{n}\prod_{s=1}^{k-1}(r+st+d)(T_0+r+st+d-1)}T_0^bT_+^{n+d} \ju \mod T_-U(L_0).%\end{align*}

\noindent Noticing that $r+st+d\neq 0$ for any $1\leq r\leq n$, $1\leq s\leq k-1$, $d\in \Z_{+}$ we obtain
\begin{align*}
\begin{split}
& \ju U(L_0)P(E_1(n,k))+T_-U(L_0) \\
   & =T_-U(L_0)+\sum_{d=0}^{\infty}\C[T_0]\p{\prod_{r=0}^{n-1}\prod_{s=1}^{k-1}(T_0+r+st+d)}T_+^{n+d}.	
\end{split}
\end{align*}

Similarly, let $a,b,d$ be any nonnegative integers. If $d<p-n$, we have (see Appendix B for details)
\begin{align}\label{appen2}
\begin{split}
&\ju T_-^a T_0^b T_+^d P(E_2(n,k))\\
   &=T_-^{a+p-n-d}(T_0+2(p-n-d))^b \prod_{r=0}^{p-n-1}\prod_{s=1}^{q-k}\prod_{i=1}^{d} G_{r+st-p+n}G_{-i-p+n+d}\ju \mod T_-U(L_0).
\end{split}
\end{align}

%\begin{align*}
%   &\ju T_-^a T_0^b T_+^d P(E_2(n,k))\\
%   & = T_-^a T_0^b T_+^d \p{\prod_{r=0}^{p-n-1}\prod_{s=1}^{q-k}G_{r+st}}T_-^{p-n}\\
%   & =T_-^a T_0^b T_+^d T_-^{p-n} \prod_{r=0}^{p-n-1}\prod_{s=1}^{q-k}G_{r+st-p+n}\\
%   & =T_-^a T_0^b \p{\prod_{i=1}^{d}G_{-i}}T_-^{p-n-d} \prod_{r=0}^{p-n-1}\prod_{s=1}^{q-k}G_{r+st-p+n}\\
%   & =T_-^a T_0^b T_-^{p-n-d} \prod_{r=0}^{p-n-1}\prod_{s=1}^{q-k}\prod_{i=1}^{d} G_{r+st-p+n}G_{-i-p+n+d}\\
%   &=T_-^{a+p-n-d}(T_0+2(p-n-d))^b \prod_{r=0}^{p-n-1}\prod_{s=1}^{q-k}\prod_{i=1}^{d} G_{r+st-p+n}G_{-i-p+n+d}\ju \mod T_-U(L_0).
%\end{align*}

\noindent If $d=m+p-n$ for some $m\in\Z_+$, we have
\begin{align*}
   & \ju T_-^a T_0^b T_+^d P(E_2(n,k)) \\
   &=T_-^a\prod_{r=1}^{p-n}\prod_{s=0}^{q-k}(st-m-r)(-T_0+st-m-r+1)T_0^b T_+^m  \ju \mod T_-U(L_0).\\
\end{align*}

%\begin{align*}
%   & \ju T_-^a T_0^b T_+^d P(E_2(n,k)) \\
%   &= T_-^a T_0^b T_+^m \p{\prod_{i=1}^{p-n}G_{-i}}\prod_{r=0}^{p-n-1}\prod_{s=1}^{q-k}G_{r+st-p+n}\\
%   &= T_-^a T_0^b T_+^m \prod_{r=0}^{p-n-1}\prod_{s=0}^{q-k}G_{r+st-p+n}\\
%   &= T_-^a \prod_{r=0}^{p-n-1}\prod_{s=0}^{q-k}G_{r+st-p+n-m}T_0^b T_+^m\\
%   &=T_-^a\prod_{r=1}^{p-n}\prod_{s=0}^{q-k}G_{st-m-r}T_0^b T_+^m\\
%   &=T_-^a\prod_{r=1}^{p-n}\prod_{s=0}^{q-k}(st-m-r)(-T_0+st-m-r+1)T_0^b T_+^m  \ju \mod T_-U(L_0).\\
%\end{align*}

Since $-st+m+r-1\neq 0$ for any $1\leq r\leq p-n$, $0\leq s\leq q-k$, we obtain
\begin{align*}
   &\ju U(L_0)P(E_2(n,k))+T_+U(L_0)  \\
   & =T_-U(L_0)+\sum_{m=0}^{\infty}\C[T_0]\p{\prod_{r=0}^{p-n-1}\prod_{s=0}^{q-k}(T_0-st+m+r)}T_+^m.
\end{align*}

Thus
\begin{align*}
   & \ju U(L_0)P(E_1(n,k))+U(L_0)P(E_2(n,k))+T_-U(L_0) \\
   & \subset T_+U(L_0)+U(L_0)T_+^n+\sum_{i=0}^{n-1}\C[T_0]\p{\prod_{r=0}^{p-n-1}\prod_{s=0}^{q-k}(T_0-st+m+r)}T_+^m.
\end{align*}

On the other hand, since $-r-st-d\neq s't'-m-r'$ for any $0\leq r\leq n-1, 1\leq s\leq k-1$, $0\leq r\leq p-n-1, 0\leq s\leq q-k$, $d,m\in\Z_+$, $\prod_{r=0}^{p-n-1}\prod_{s=0}^{q-k}(x-st+m+r)$ and $\prod_{r=0}^{p-n-1}\prod_{s=0}^{q-k}(x-st+m+r)$ are relatively prime. Then we obtain
\[\C[T_0]T_+^{n+i}\subset U(L_0)P(E_1(n,k))+U(L_0)P(E_2(n,k))+T_-U(L_0)\]
for any $i\in \Z_+$. This shows that
\begin{align*}
   & U(L_0)P(E_1(n,k))+U(L_0)P(E_2(n,k))+T_-U(L_0) \\
   & \supset T_-U(L_0)+U(L_0)T_+^n+\sum_{i=0}^{n-1}\C[T_0]\p{\prod_{r=0}^{p-n-1}\prod_{s=0}^{q-k}(T_0-st+m+r)}T_+^i.
\end{align*}

\noindent Set $x=T_0$, $y=T_-$. Then the Proposition follows from the similar argument in Proposition 4.3, Lemma 4.5 (\cite{dong1997vertex}).
\end{proof}
%\begin{thm}
%  For admissible weights $j_i=n_i-1-(k_i-1)t$, $(i=1,2)$, the fusion rules are given as follows:
%  \[ (L(\ell,j_1))^*\times (L(\ell,j_2))^*=\sum_{i=\max\{0,n_1+n_2-p\}}^{\min\{n_1-1,n_2-1\}}L(\ell, -j_1-j_2+2i).\]
%  if $0\leq k_2-1\leq q-k_1$, and $(L(\ell,j_1))^*\times (L(\ell,j_2))^*=0$ otherwise.
%\end{thm}

We have the fusion rules:
\begin{thm}
For admissible weight $j_i=n_i-1-(k_i-1)t$ $(i=1,2)$, the fusion rules are given as follows:
  \begin{align}
   & L(\ell,j_1)\times L(\ell,j_2)=\sum_{i=\max\{0,n_1+n_2-p\}}^{\min\{n_1-1,n_2-1\}}L(\ell,j_1+j_2-2i), \label{fusion3}\\
   & (L(\ell,j_1))^*\times L(\ell,j_2)
   =L(\ell,j_2)\times (L(\ell,j_1))^*=
  \left\{
    \begin{array}{ll}
      L(\ell,-j_1+j_2) , & \hbox{if $n_2-n_1\geq 0$;} \\
      (L(\ell,j_1-j_2))^*, & \hbox{if $n_2-n_1<0$.}
    \end{array}
  \right. \label{fusion2}\\
  % & L(\ell,j_2)\times L(\ell,-j_1)
%   =\sum_{i=\max\{0,n_1+n_2-p\}}^{\min\{n_1-1,n_2-1\}}L(\ell,-j_1+j_2-2i)\\
& (L(\ell,j_1))^*\times (L(\ell,j_2))^*
 =\sum_{i=\max\{0,n_1+n_2-p\}}^{\min\{n_1-1,n_2-1\}}(L(\ell,j_1+j_2-2i))^*.\label{fusion}
 \end{align}

\end{thm}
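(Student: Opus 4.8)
The plan is to compute all three products through the Frenkel--Zhu bimodule theorem \cite{FrenkelZhu}, i.e. the $g_1=g_2=\mathrm{id}$ case of the conjecture recalled above, which gives
$$N^{3}_{1\,2}=\dim\hom_{A(V)}\!\big(A(M^1)\otimes_{A(V)}M^2(0),\,M^3(0)\big),$$
with $V=L(\ell,0)$ carrying the shifted conformal vector $\omega_z=\omega-\tfrac12 z\,h(-2)\mathbf 1$, $0<z<1$, introduced above. Following \cite{dong1997vertex}, the role of this shift is that the induced $\Q$-grading separates $h$-weights by their conformal weight, so the top space $M(0)$ of every irreducible (contragredient) highest-weight module becomes one-dimensional and $A(V)\cong\C[x]/\langle\prod_{r,s}(x-r+st)\rangle$ is commutative with $x$ represented by $h$. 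Each fusion rule thus reduces to tensoring an explicit $\C[x]$-bimodule with a one-dimensional $\C[x]$-module over $A(V)$ and decomposing the outcome into eigenspaces.

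First I would dispose of (\ref{fusion3}). This is the fusion among ordinary admissible highest-weight modules; after matching the parametrization $j_i=n_i-1-(k_i-1)t$ it is precisely the main fusion theorem of \cite{dong1997vertex}, so I would invoke that result directly. The relevant bimodule $A(L(\ell,j_1))$ is the $z$-analogue, under the interchange $e\leftrightarrow f$, of the bimodule of Proposition \ref{bimod}, which is why the same range $\max\{0,n_1+n_2-p\}\le i\le \min\{n_1-1,n_2-1\}$ appears.

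For (\ref{fusion2}) and (\ref{fusion}) I would use Proposition \ref{bimod}, identifying $A((L(\ell,j_1))^*)$ with the quotient of $\C[x,z]$ by $\C[x,z]z^{n_1}+\sum_{i=0}^{n_1-1}\C[x]\,f'_{j_1,i}(x,z)$, the left and right $\C[x]$-actions being (\ref{biact}). The right action is multiplication by $x$, so tensoring over $A(V)$ with the one-dimensional top space amounts to substituting for $x$ the eigenvalue attached to $j_2$: the value $j_2$ for the highest-weight top space of $L(\ell,j_2)$ in (\ref{fusion2}), and its negative $-j_2$ for the contragredient top space of $(L(\ell,j_2))^*$ in (\ref{fusion}). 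In each case one obtains a cyclic $\C[x]$-module $\C[x,z]/J$ with $J=\langle\, x-\mu_2,\ \C[x,z]z^{n_1},\ f'_{j_1,i}(\mu_2,1)\C[x]z^{i}\,\rangle$; decomposing it into one-dimensional $x$-eigenspaces and matching each eigenvalue to the top space of an $L(\ell,\lambda)$ or an $(L(\ell,\lambda))^*$ reads off the multiplicities. This is the $e\leftrightarrow f$ reflection of \cite[Theorem 4.7]{dong1997vertex} and of the twisted computation that produced (\ref{twfus}); commutativity $N^{3}_{1\,2}=N^{3}_{2\,1}$ of fusion rules then yields the equality $L(\ell,j_2)\times(L(\ell,j_1))^*$ in (\ref{fusion2}).

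The main obstacle is the case distinction in (\ref{fusion2}). Whether the surviving eigenvalue corresponds to a genuine highest-weight module $L(\ell,-j_1+j_2)$ or to a contragredient module $(L(\ell,j_1-j_2))^*$ is controlled by the sign of $n_2-n_1$, which dictates which of the defining factors $f'_{j_1,i}$, indexed over $0\le r\le p-n_1-1$ and $0\le s\le q-k_1$, remains nonvanishing after the substitution $x=\mu_2$. Checking that the two regimes glue into the single clean dichotomy stated, and that the coprimality $\gcd(p,q)=1$ together with the ranges $1\le n_i\le p-1$, $1\le k_i\le q$ prevents any spurious eigenvalue or negative multiplicity, is where the genuine care is needed. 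Finally, (\ref{fusion}) may be obtained either by the identical direct computation or, more economically, from (\ref{fusion3}) via the contragredient symmetry $N^{k}_{ij}=N^{k^{*}}_{i^{*}j^{*}}$ of fusion rules (cf. \cite{frenkel1993axiomatic,xu1995intertwining}), which sends each summand $L(\ell,j_1+j_2-2i)$ to its dual.
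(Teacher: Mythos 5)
Your strategy --- the Frenkel--Zhu bimodule theorem over the shifted ($\Q$-graded) Zhu algebra, Proposition \ref{bimod} for $A((L(\ell,j_1))^*)$, substitution of the top-space eigenvalue, and eigenvalue matching --- is exactly the paper's, and two of your three computations agree with it: (\ref{fusion3}) is indeed just quoted from \cite{dong1997vertex}, and for (\ref{fusion}) the paper does precisely what you describe (substitute $x=-j_2$, observe that $f'_{j_1,i}(-j_2,1)=0$ forces $r+i=n_2-1$ and $s=k_2-1$, which produces both the condition $0\le k_2-1\le q-k_1$ and the range $\max\{0,n_1+n_2-p\}\le i\le\min\{n_1-1,n_2-1\}$, then read off the left eigenvalues $-(j_1+j_2-2i)$).

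However, your treatment of (\ref{fusion2}) has a genuine gap. You claim the dichotomy is controlled by which $f'_{j_1,i}$ remains nonvanishing after the substitution $x=j_2$. Carry this out: $f'_{j_1,i}(j_2,1)=\prod_{r,s}\bigl((n_2-1+r+i)-(k_2-1+s)t\bigr)$, and since $t=p/q$ with $\gcd(p,q)=1$, a factor vanishes only if $q\mid(k_2-1+s)$ and $p\mid(n_2-1+r+i)$ simultaneously; for the eigenvalue $+j_2$ (unlike $-j_2$ in (\ref{fusion})) this occurs only in degenerate cases (e.g.\ $j_2=0$, or $k_2\ge k_1+1$ together with $n_2\ge 3$). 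Generically no factor vanishes, so every $z^i$ with $0\le i\le n_1-1$ lies in $J$ and your tensor product is zero. Concretely, at $\ell=-\f43$ with $j_1=j_2=-\f23$ one has $f'_{j_1,0}(x,z)=x(x-\f23)$ and $f'_{j_1,0}(-\f23,1)=\f89\neq 0$, so your recipe outputs $0$, while the correct fusion product is $L(\ell,0)$ (this is also the paper's explicit computation over $U(\s_2)$ in Section 4.2). The step you are missing is the paper's key observation ``$e\,v_{j_2}=0$'': because the top space of $L(\ell,j_2)$ is annihilated by $e(0)$, the bimodule generator $z$ (the class of $e(0)$ applied to the lowest-weight vector of $(L(\ell,j_1))^*$) is killed in the tensor product, so $J=\langle x-j_2,\,z\rangle$ and the product is one-dimensional with left eigenvalue $j_2-j_1$, regardless of the polynomials $f'_{j_1,i}$. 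The case distinction on $n_2-n_1$ is then not a vanishing statement at all: it is the identification step, deciding whether the number $j_2-j_1$ is itself an admissible highest weight (summand $L(\ell,-j_1+j_2)$) or the negative of one (summand $(L(\ell,j_1-j_2))^*$). Your remaining remarks --- commutativity for the second equality of (\ref{fusion2}), and the alternative derivation of (\ref{fusion}) from (\ref{fusion3}) by contragredient symmetry --- are reasonable, though the symmetry $N_{jk}^{i}=N_{j^*k^*}^{i^*}$ is not what the paper uses (it computes (\ref{fusion}) directly) and would itself require justification in this non-rational setting, where graded pieces of admissible modules can be infinite dimensional.
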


\begin{proof}
(\ref{fusion3}) was proved in \cite{dong1997vertex}. We use the similar method to prove (\ref{fusion2}) and (\ref{fusion}).

We prove the (\ref{fusion2}). For any admissible weight $-j$, let $\C v_{-j}$ be the one-dimensional module for Lie algebra $\C h$ such that $h v_{-j}=-j v_{-j}$. Then $\C  v_{-j}$ is the lowest weight space of $L(\ell,-j)$. By Frenkel-Zhu's Theorem, we need calculate the $A(L(\ell,0))$-module $A(L(\ell,-j_1))\otimes_{A(L(\ell,0))} \C v_{j_2 }.$ Note $e v_{j_2}=0$. We get
\[A(L(\ell,-j_1))\otimes_{A(L(\ell,0))}\C v_{j_2}\cong \C[x,z]/J\]
where $J$ is the subspace of $\C[x,z]$ spanned by
\[\{x-j_2,z\}.\]
\noindent Thus, $\C[x,z]/J\cong v_{-j_1}\otimes v'_{j_2}.$ And $x* (v_{-j_1}\otimes v'_{j_2})=j_2-j_1$, as required.

  For (\ref{fusion}), let $\C v_{-j}$ be the one-dimensional module for Lie algebra $\C h$ such that $hv_{-j}=-jv_{-j}$.
  %By Frenel-Zhu's Theorem we need calculate the $A(L(\ell,0))$-module $A((L(\ell,j_1))^*)\otimes_{A(L(\ell,0))}\C v_{-j_2}$.
  Using Proposition \ref{bimod} we get
  \[A(L(\ell,j_1))\otimes_{A(L(\ell,0))}\C v_{-j_2}\cong \C[x,z]/J\]
  where $J$ is the subspace of $\C[x,z]$ spanned by
\[\{x+j_2,\C[x,z]z^{n_1}, f'_{j_1,i}(-j_2,1)\C[x]z^i,i=0,1,...,n_1-1\}.\]

If $j_2$ does not satisfy the relation $0\leq k_2-1\leq q-k_1$, then
\[f'_{j_1,i}(-j_2,1)=\prod_{r=0}^{p-n_1-1}\prod_{s=0}^{q-k_1}(-j_2+r+i-st)\neq 0\]
for $0\leq i\leq n_1-1.$ Thus $A(L(\ell,-j_1))\otimes_{A(L(\ell,0))}\C v_{j_2}=0$ so that all the corresponding fusion rules are zero.

Suppose $0\leq k_2-1\leq q-k_1. $ As before $\C[x]z^i=0$ in $\C[x,z]/J$ if $f'_{j_1,i}(j_2,1)\neq 0.$ Noticing $f'_{j_1,i}(j_2,1)=0$ if and only if $-j_2+r+i-st=0$ for some $0\leq r\leq p-n_1-1$ and $0\leq s\leq q-k_1$. This implies that $r+i=n_2-1$. Thus $n_1+n_2-p\leq i\leq n_2-1.$ Therefore
\[\max\{0,n_1+n_2-p\}\leq i\leq \min\{n_1-1,n_2-1\}. \]
If $n_1+n_2-p\leq i\leq n_2-1$, then $\C[x]z^i$ is not zero in $\C[x,z]/J$. Thus
\[\C[x,z]/J\cong\oplus_{\max\{0,n_1+n_2-p\}\leq i\leq \min\{n_1-1,n_2-1\}}\C y^i. \]
From (\ref{biact}) we get $x*z^i=(-j_2-j_1+2i)z^i$, as required.

\end{proof}

\subsection{Fusion rules among twisted modules}
We follow the same notations as in previous subsection. Let $\hat{\sigma}'=e^{-\f{\pi ih_{(0)}}{2}}$. By using the similar arguments as in Section 3, one has
\begin{thm}
\begin{itemize}
  \item We have the following isomorphism
  \[A(L(\ell,j)^*) \cong A_{\hat{\sigma}',\hat{\sigma}'}(L(\ell,j)^*),\]
  via $\Delta_{\f12}(1)$.
  \item We also have
  \begin{align}
     &  (L(\ell,j_1))^*\times \sigma^{\f12}((L(\ell,j_2))^*)
 =\sum_{i=\max\{0,n_1+n_2-p\}}^{\min\{n_1-1,n_2-1\}}\sigma^{\f12}((L(\ell,j_1+j_2-2i))^*).\\
      & (L(\ell,j_1))^*\times \sigma^{\f12}(L(\ell,j_2))
   =
  \left\{
    \begin{array}{ll}
      \sigma^{\f12}(L(\ell,-j_1+j_2)) , & \hbox{if $n_2-n_1\geq 0$;} \\
      \sigma^{\f12}((L(\ell,j_1-j_2))^*), & \hbox{if $n_2-n_1<0$.}
    \end{array}
  \right.\\
      &
   L(\ell,j_2)\times \sigma^{-\f12}((L(\ell,j_1))^*)=
  \left\{
    \begin{array}{ll}
      \sigma^{-\f12}(L(\ell,-j_1+j_2)) , & \hbox{if $n_2-n_1\geq 0$;} \\
      \sigma^{-\f12}((L(\ell,j_1-j_2))^*), & \hbox{if $n_2-n_1<0$.}
    \end{array}
  \right.
  \end{align}
\end{itemize}
\end{thm}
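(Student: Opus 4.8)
The plan is to reuse the two-step mechanism of Section~\ref{section3}. First I would upgrade the ordinary ($\Q$-graded, ``dlm'') Zhu bimodule $A((L(\ell,j))^*)$ of Proposition~\ref{bimod} to its $\hat\sigma'$-twisted version through the delta operator $\Delta_{\f12}(1)$, and then transport the untwisted fusion rules (\ref{fusion3}), (\ref{fusion2}), (\ref{fusion}) of the preceding theorem across that isomorphism. Throughout, the twisting automorphism is $\hat\sigma'=e^{-\pi i h_{(0)}/2}$, which as an automorphism of $V$ agrees with $\hat\sigma$ (both act by $e\mapsto-e$, $f\mapsto-f$, $h\mapsto h$); the prime only records that we flow in the $\sigma^{\f12}$ direction via $\Delta_{\f12}$ rather than via $\Delta_{-\f12}$.

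For the first bullet I would copy the computation proving Proposition~\ref{isozhu} verbatim, replacing $\Delta_{-\f12}$ by $\Delta_{\f12}$ and working with the shifted conformal vector $\omega_z=\omega-\f12 z\,h(-2)\bd{1}$ used for contragredient modules. The only inputs are the conjugation identity $\Delta_{\f12}(1)Y(a,z)=Y(\Delta_{\f12}(z+1)a,z)\Delta_{\f12}(1)$ and the weight bookkeeping $[\wt(u_{(n)}a)]=[\wt a]+\lambda-n$, neither of which is sensitive to the sign of the flow. Hence the three intertwining relations $\Delta_{\f12}(1)(a\circ_{\dlm}m)=(\Delta_{\f12}(1)a)\circ_{\hat\sigma',\hat\sigma'}(\Delta_{\f12}(1)m)$ and its two starred analogues go through as before, yielding the bimodule isomorphism $A((L(\ell,j))^*)\cong A_{\hat\sigma',\hat\sigma'}((L(\ell,j))^*)$ via $m\mapsto\Delta_{\f12}(1)m$.

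For the fusion rules I would invoke the conjectural twisted Frenkel--Zhu formula of \cite{zhu2022bimodues}: with $M^1$ untwisted and $M^2,M^3$ the spectral-flowed ($\hat\sigma'$-twisted) modules, the fusion coefficient equals $\dim\Hom_{A(V)}(A_{\hat\sigma',\hat\sigma'}(M^1)\otimes M^2(0),M^3(0))$. Using the isomorphism of the first bullet to replace $A_{\hat\sigma',\hat\sigma'}(M^1)$ by the ordinary bimodule $A(M^1)$, and identifying the twisted top space with the lowest-weight space of the un-flowed module, this dimension collapses onto the untwisted fusion coefficient. The key numerical input is that $\Delta_{\f12}(1)$ shifts $h(-1)$ by $-\f12\ell$ while fixing the relevant nilpotent generator (as in $\Delta_{-\f12}(1)h(-1)v=(h(-1)+\f12\ell)v$ and $\Delta_{-\f12}(1)f(0)v=f(0)v$ from Section~\ref{section3}), so the one-dimensional $\C h$-module $\C v'_{j_2}$ feeding the tensor product $A(M^1)\otimes_{A(L(\ell,0))}\C v'_{j_2}$ has its $h$-eigenvalue moved by exactly $\pm\f12\ell$ --- precisely the shift by which $\sigma^{\pm\f12}$ translates the Dynkin labels in Proposition~\ref{admissible} and Section~\ref{bal1}. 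Reading this off, the first formula is the image of (\ref{fusion}) under $\sigma^{\f12}$ applied to the second factor and the output, the second is the image of the $(L(\ell,j_1))^*\times L(\ell,j_2)$ branch of (\ref{fusion2}) under $\sigma^{\f12}$, and the third is the image of the symmetric branch of (\ref{fusion2}) under $\sigma^{-\f12}$ (now using $\Delta_{-\f12}$ and Proposition~\ref{isozhu} in place of $\Delta_{\f12}$).

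The proof is conditional on the twisted Frenkel--Zhu Conjecture, which is the first obstacle since it is not established in general. Granting it, the genuine remaining work is the weight combinatorics of the preceding paragraph: one must check that the $\pm\f12\ell$ shift leaves the summation range $\max\{0,n_1+n_2-p\}\le i\le\min\{n_1-1,n_2-1\}$ and the vanishing criterion $0\le k_2-1\le q-k_1$ intact, and that it places the contragredient decoration $(\cdot)^*$ and the case distinction $n_2-n_1\gtrless0$ onto the correct summands. This bookkeeping is routine given the explicit labels, but it is where the three formulas genuinely differ from one another and so must be carried out case by case.
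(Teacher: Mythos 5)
Your proposal matches the paper's own argument: the paper proves this theorem precisely by remarking that ``the similar arguments as in Section 3'' apply, i.e.\ the $\Delta_{\f12}(1)$-analogue of Proposition~\ref{isozhu} giving $A((L(\ell,j))^*)\cong A_{\hat\sigma',\hat\sigma'}((L(\ell,j))^*)$, followed by transporting the untwisted fusion rules (\ref{fusion3})--(\ref{fusion}) through the (conjectural) twisted Frenkel--Zhu bimodule theorem, exactly as you outline. Your explicit acknowledgement that the result is conditional on that conjecture, and your bookkeeping of the $\pm\f12\ell$ shift of the $h$-eigenvalue, are faithful to what the paper does implicitly in Sections 3 and 4.
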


\section{Twisted modules from spectral flow and their MLDEs}\label{section5}

In \cite{Hao2022}, we have the following result
\begin{thm}
	If $V$ is a quasi-lisse vertex superalgebra and $g$ is an automorphism of $V$ of finite order, then the supercharacter of its simple $g$-twisted module satisfies the twisted modular linear differential equation.
\end{thm}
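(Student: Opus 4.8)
The plan is to adapt the Arakawa--Kawasetsu argument for ordinary modules of quasi-lisse vertex algebras \cite{Arakawa:2016hkg} to the $g$-twisted, super setting, organized by the twisted modular invariance machinery of Dong--Li--Mason \cite{dong2000modular}. Fix a simple $g$-twisted module $M$ with $g$ of order $T$, and for each $a\in V$ form the twisted supertrace function
\[
S_M(a,\tau)=\operatorname{str}_M o(a)\,q^{L_{(0)}-c/24},\qquad q=e^{2\pi i\tau},
\]
where $o(a)$ is the degree-zero mode and the supertrace inserts the parity sign. Since $L_{(0)}$ has eigenvalues in $\tfrac1T\Z+h_M$ and $g$ acts by roots of unity, these functions converge and are holomorphic on the upper half-plane, and they transform under a congruence subgroup $\Gamma\subset SL_2(\Z)$ determined by $T$ (for $T=2$ one gets $\Gamma^0(2)$, matching the examples of Section \ref{section5}); the relevant modular forms live on $\Gamma$.

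First I would set up the twisted Zhu recursion. Following \cite{dong2000modular}, one expresses $S_M(a[k]b,\tau)$ for the square-bracket modes as a combination of twisted Eisenstein series times lower trace functions $S_M(\,\cdot\,,\tau)$. Iterating this recursion shows that the span $\mathcal{T}_M:=\sum_{a\in V}M_*(\Gamma)\,S_M(a,\tau)$ is generated, as a module over the ring $M_*(\Gamma)$ of $\Gamma$-modular forms, by the trace functions attached to a finite set of generators of the $C_2$-algebra $R_V=V/C_2(V)$ together with their $g$-eigencomponents. The Ramanujan--Serre derivative $\vartheta$ raises weight by $2$, preserves $\Gamma$-modularity, and maps $\mathcal{T}_M$ into itself, so the whole problem reduces to showing that $\mathcal{T}_M$ is a \emph{finitely generated} $M_*(\Gamma)$-module.

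This finiteness is where the quasi-lisse hypothesis enters, and it is the main obstacle. In the untwisted case \cite{Arakawa:2016hkg} one uses that the associated variety $X_V=\operatorname{Spec}R_V$ is a finite union of symplectic leaves to force the corresponding module of trace functions to be finitely generated over modular forms. I would recover this in the twisted setting by one of two routes. The cleaner route passes to the orbifold $V^{\langle g\rangle}$: a $g$-twisted $V$-module is an ordinary $V^{\langle g\rangle}$-module, and because $g$ has finite order $X_{V^{\langle g\rangle}}$ is a finite quotient of $X_V$ and hence still has finitely many symplectic leaves, so $V^{\langle g\rangle}$ is again quasi-lisse and the untwisted Arakawa--Kawasetsu finiteness applies directly to the $V^{\langle g\rangle}$-trace functions, which include $S_M(\mathbf 1,\tau)$. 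The alternative, more hands-on route is to adapt the Arakawa--Kawasetsu finiteness lemma to the twisted $C_2$-algebra $R_V^{g}$ built from the $g$-fixed modes, verifying that symplectic-leaf finiteness persists; this forces one to track the fractional $\tfrac1T\Z$-gradings and the parity signs, which is the delicate point. For the twists actually used in this paper, namely $g=e^{2\pi i v_{(0)}}$ arising from Li's $\Delta$-operator (Proposition \ref{li1996local}), there is moreover a shortcut: the twisted supercharacter is literally an untwisted character computed with respect to the shifted conformal vector $\omega+L_{(-1)}v$, so the untwisted result applies after the shift, bypassing the orbifold entirely.

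Granting the finiteness, the conclusion is formal. The cyclic $M_*(\Gamma)$-submodule $\sum_{n\ge0}M_*(\Gamma)\,\vartheta^n\chi^{\mathrm{twi}}$ generated inside $\mathcal{T}_M$ by the normalized supercharacter $\chi^{\mathrm{twi}}=S_M(\mathbf 1,\tau)$ is again finitely generated, so there is a least $d$ with $\vartheta^{d}\chi^{\mathrm{twi}}=-\sum_{i=0}^{d-1}f_i\,\vartheta^{i}\chi^{\mathrm{twi}}$ for modular forms $f_i\in M_*(\Gamma)$ of the appropriate weights. This is exactly a twisted MLDE of the shape $(D^{(d)}_q+\cdots)\chi^{\mathrm{twi}}=0$ appearing in Conjecture \ref{conjecture}. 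Beyond the finiteness, the only remaining points are the convergence of the supertraces and the precise identification of the symmetry group $\Gamma$, both of which follow from the finite order of $g$.
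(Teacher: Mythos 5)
First, a point of comparison: the paper contains no proof of this theorem at all --- it is quoted from \cite{Hao2022}, which the introduction describes as generalizing the Dong--Li--Mason trace-function machinery to quasi-lisse vertex operator superalgebras --- so your proposal can only be judged against that strategy and against correctness. Judged so, there is a genuine gap, and it sits exactly where quasi-lisse-ness has to do its work. Your central reduction, that everything follows once $\mathcal{T}_M=\sum_{a\in V}M_*(\Gamma)\,S_M(a,\tau)$ is shown to be a finitely generated $M_*(\Gamma)$-module, is the $C_2$-cofinite mechanism of \cite{dong2000modular}, and it is unavailable here: for a quasi-lisse but non-lisse $V$ the algebra $R_V=V/C_2(V)$ is infinite dimensional (for $L_{-4/3}(\s_2)$ the associated variety $X_V$ is the two-dimensional nilpotent cone of $\s_2$, so $R_V$ surjects onto its infinite-dimensional coordinate ring), hence the span of all twisted trace functions has no reason to be finitely generated over modular forms, and nothing in your proposal would make it so. This also misreads the mechanism of \cite{Arakawa:2016hkg}: their key lemma is that finiteness of the number of symplectic leaves of $X_V$ forces the image $\bar\omega$ of the conformal vector in $R_V$ to be \emph{nilpotent}, i.e.\ there exist $n$ and a relation $\omega_{(-1)}^{n-1}\omega\in C_2(V)$ inside $V$ itself. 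It is this single relation --- not any finiteness of spaces of trace functions --- that is pushed through Zhu's recursion to produce a monic MLDE satisfied by the character alone, with convergence then supplied by the theory of regular singular points. The twisted super version follows by feeding the very same relation (which lives in $V$ and is independent of the module, the parity, and $g$) through the twisted recursion of \cite{dong2000modular} adapted to supertraces, with twisted Eisenstein series as coefficients; this is the step you label ``delicate'' and never perform, and it is the heart of the proof.

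The two shortcuts you offer in place of that step do not close the gap. For the orbifold route: the natural map $R_{V^{\langle g\rangle}}\to (R_V)^{\langle g\rangle}$ is surjective, so one obtains only a closed embedding $X_V/\langle g\rangle\hookrightarrow X_{V^{\langle g\rangle}}$; whether this is an equality of reduced schemes (equivalently, whether the kernel consists of nilpotents) is precisely the orbifold-finiteness problem, which is a theorem of Miyamoto type for \emph{lisse} $V$ and cyclic $\langle g\rangle$ but is open for quasi-lisse $V$ --- which is the case of every example in this paper, since admissible affine vertex algebras are quasi-lisse and not lisse. So ``$V^{\langle g\rangle}$ is again quasi-lisse'' is an assumption, not a consequence. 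For the conformal-shift shortcut: it treats only inner automorphisms $g=e^{2\pi i v_{(0)}}$, whereas the theorem concerns an arbitrary finite-order automorphism of a vertex superalgebra; and even in the inner case the shifted structure $(V,\omega+L_{(-1)}v)$ is only $\tfrac1T\Z$-graded, generally without the positivity hypotheses under which the untwisted theorem of \cite{Arakawa:2016hkg} is proved, so ``the untwisted result applies after the shift'' is not automatic. Your ``hands-on route'' is indeed the right skeleton, but what must be verified there is not ``symplectic-leaf finiteness for $R_V^{g}$'' (no twisted associated variety is needed): it is the nilpotence relation above, together with the super/twisted Zhu recursion and the congruence-subgroup modularity of its coefficients.
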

\noindent In this section and the next, we shall provide examples for this Theorem, i.e., twisted modules coming from spectral flow. We also discuss their applications in physics.

Firstly, let  us review some useful facts about modular forms and modular differential operators. See any standard reference, or \cite{Kaneko:2013uga} for further details. The ordinary Eisenstein series are modular forms for the full modular group $\Gamma$ of weight $2k$ with $k\geq 2$. We define our Eisenstein series,
following the notation of \cite{Beem:2017ooy},
\begin{equation}
\mathbbm{E}_{k}(\tau)=-\frac{B_{2k}}{2k!}+\frac{2}{(2k-1)!}\sum_{n=1}^{\infty}\frac{n^{2k-1}q^{n}}{1-q^{n}}
\end{equation}
where $B_{2k}$ is the $2k$’th Bernoulli number. The ring of modular forms for the full modular group $\Gamma$ is freely generated by $\mathbbm{E}_{4}(\tau)$ and $\mathbbm{E}_{6}(\tau)$, so we
have,
\begin{equation}
\bigoplus_{k=0}^{\infty}M_{k}(\Gamma,\mathbb{C})=\mathbb{C}[\mathbbm{E}_{4}(\tau),\mathbbm{E}_{6}(\tau)]
\end{equation}
We also make use of a class of twisted Eisenstein series that are modular forms for certain congruence subgroups of $\Gamma$,
\begin{equation}
\mathbbm{E}_{k}\begin{bmatrix}\varphi\\ \vartheta\end{bmatrix}(\tau)\equiv-\frac{B_{k}(\lambda)}{k!}+\frac{1}{(k-1)!}\sum_{r\geq0}{}' \frac{(r+\lambda)^{k-1}\vartheta^{-1}q^{r+\lambda}}{1-\vartheta^{-1}q^{r+\lambda}}+\frac{(-1)^{k}}{(k-1)!}\sum_{r\geq1}\frac{(r-\lambda)^{k-1}\vartheta q^{r-\lambda}}{1-\vartheta q^{r-\lambda}}
\end{equation}
where $\varphi=e^{2\pi i \lambda}$ with $\lambda\in [0,1)$ and now $B_{k}(x)$ is the $k$’th Bernoulli polynomial. The prime in the first summation means that the $r=0$ term should be omitted when $\varphi=\vartheta=1$. The spaces of modular forms for $\Gamma(2)$, $\Gamma^{0}(2)$ all admit a simple description
in terms of theta functions. For example,
\begin{equation}	M_{2k}(\Gamma^{0}(2))=\text{span}_{\mathbb{C}}\left\{\bar{\Theta}_{r,s}(\tau)|r+s=k\right\}
\end{equation}
where the $\bar{\Theta}_{r,s}$ takes the following form,
\begin{equation}
\bar{\Theta}_{r,s}(\tau):=\theta_{2}(\tau)^{4r}\theta_{3}(\tau)^{4s}+\theta_{2}(\tau)^{4s}\theta_{3}(\tau)^{4r}, \qquad r\leq s.
\end{equation}
We define $k$’th order modular differential operators $D^{(k)}_{q}$ as
\begin{equation}
D^{(k)}_{q}\chi(q):=\partial_{(2k-2)}\circ\dots\circ\partial_{(2)}\circ\partial_{(0)}\chi(q),
\end{equation}
then modular linear differential operators that are holomorphic and monic have the following generic form,
\begin{equation}
\mathcal{D}^{(k)}_{q}\equiv D^{(k)}_{q}+\sum_{r=1}^{k}f_{r}(q)D^{(k-r)}_{q}, \quad f_{r}(q)\in M_{2k}(\tilde{\Gamma},\mathbb{C})
\end{equation}
where $\tilde{\Gamma}$ denote  any congruence subgroup of $\Gamma$.

\subsection{$A^{(1)}_{1}$ at boundary admissible levels $k=-2+\frac{2}{u}$}
In this section, we give some specific MLDEs of irreducible $\mathbb{Z}_{2}$-twisted modules for $A^{(1)}_{1}$ at boundary admissible level $k=-2+\frac{2}{u}$. Following from Proposition \ref{admissible} and (\ref{formula59}), we have:
\begin{thm}
All  irreducible $\mathbb{Z}_{2}$-twisted modules of $L_{k}(\mathfrak{sl}_2)$ at admissible level in category $\mathcal{O}$ can be obtained by using $\ell=-\frac{1}{2}$ spectral flow on the untwisted modules in category $\mathcal{O}$. In particular, for boundary admissible level, all of  those irreducible twisted modules are ordinary modules. we find that the $q$-series characters satisfy the following relation,
\begin{equation}
	\mathrm{ch}[\sigma^{-\frac{1}{2}}(L(\Lambda_{k,j}))](q)=\mathrm{ch}[\sigma^{-\frac{1}{2}}(L(\Lambda_{k,u-1-j}))](q)
\end{equation}
 Furthermore, the number of independence $q$ series characters is $\frac{u+1}{2}$.
\end{thm}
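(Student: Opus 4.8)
The plan is to read off both the classification and the character statements by matching the twisted Zhu algebra computed in (\ref{formula59}) against the spectral-flow formulas of Proposition \ref{admissible}, and then to extract the counting from the explicit $\theta_1$-quotient characters of Subsection \ref{bal1} (here $p=v$, $q=u$, $\ell=k$, $t=p/q$). First I would invoke the twisted Zhu correspondence: irreducible $\hat\sigma$-twisted $L_k(\s_2)$-modules in category $\mathcal{O}$ are in bijection with irreducible $A_{\hat\sigma}(L(\ell,0))$-modules, the $h_{(0)}$-eigenvalue on the top space being the module's Dynkin label. By (\ref{formula59}) this algebra is $\C[x]/\la\prod_{r=0}^{p-2}\prod_{s=0}^{q-1}(x+\tfrac12\ell-r+st)\ra$, and I would check its $(p-1)q$ roots $r-st-\tfrac12\ell$ are pairwise distinct: if $r_1-s_1t=r_2-s_2t$ with $t=p/q$ then $q\mid(s_1-s_2)$, which forces $s_1=s_2$ (as $0\le s_i\le q-1$) and hence $r_1=r_2$ since $\gcd(p,q)=1$. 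Thus the algebra is reduced and semisimple, its simple modules are one-dimensional and indexed by these roots, and the set of roots $\{r-st-\tfrac12\ell\}$ coincides with $\{\sigma^{-1/2}(\lambda(i,j,v,u))\}$ under $r=i-1$, $s=j$. Therefore every irreducible twisted module is realized as $\sigma^{-1/2}(L(\ell,\lambda))$ for a unique admissible $\lambda$, proving the classification.

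At boundary admissible level $p=v=2$, so $i$ is forced to equal $1$ and the twisted modules reduce to $\sigma^{-1/2}(L(\Lambda_{k,j}))$, $j=0,\dots,u-1$. To see these are ordinary I would use the conformal-weight formula $\sigma^{-1/2}(\Delta(j,u))$ of Proposition \ref{admissible} together with the $-\tfrac14 h_0$ shift in (\ref{spectral2}): this shift trades the unbounded-below $h_{(0)}$-spectrum of the highest weight module $L(\Lambda_{k,j})$ for an $\hat L_0$-spectrum that is bounded below with finite-dimensional graded pieces. Equivalently, the $\mathbf{z}\to 1$ specialization of the twisted character $\frac{\theta_1(\mathbf{z}^2q^{-j-1/2};q^u)}{\theta_1(\mathbf{z}^2 q^{-1/2};q)}$ in Subsection \ref{bal1} is a genuine $q$-series, which is exactly the statement that the module is ordinary.

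For the character identity I would use the two symmetries recorded in Subsection \ref{bal1}, namely $\sigma^{-1/2}(\lambda(j,u))=-\sigma^{-1/2}(\lambda(u-1-j,u))$ and $\sigma^{-1/2}(\Delta(j,u))=\sigma^{-1/2}(\Delta(u-1-j,u))$. The first says the two modules carry opposite $\s_2$-weights, so they are interchanged by the charge-conjugation automorphism of $\s_2$ (and of the VOA), which fixes $\omega$ and hence preserves the $L_0$-grading, while the second confirms the bottom conformal weights agree; consequently their $\mathbf{z}=1$ characters coincide, giving the asserted relation. For the count I would write $\sigma^{-1/2}(\Delta(j,u))=\frac{1-u}{8u}+\frac{j(j+1-u)}{2u}$, a quadratic in $j$ symmetric about $j=(u-1)/2$ and strictly monotone on $0\le j\le (u-1)/2$, so its values there are pairwise distinct. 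Since $u$ is odd, the involution $j\mapsto u-1-j$ has the single fixed point $j=(u-1)/2$ together with $(u-1)/2$ two-element orbits, yielding $(u+1)/2$ distinct conformal weights; as power series with distinct leading $q$-exponents are linearly independent, there are exactly $(u+1)/2$ independent $q$-series characters.

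The main obstacle I anticipate is the third step: upgrading the equality of bottom conformal weights to equality of the full $q$-series. The clean route is the charge-conjugation isomorphism, but one must verify it intertwines the two half-integer spectral flows correctly, i.e. that conjugating and then flowing by $-\tfrac12$ agrees with flowing the conjugate weight, and this is precisely where the weight relation $\sigma^{-1/2}(\lambda(j,u))=-\sigma^{-1/2}(\lambda(u-1-j,u))$ does the work. The alternative, a direct verification as a theta identity, requires care with the $\mathbf{z}\to1$ limit of the quotient $\theta_1(\mathbf{z}^2q^{-j-1/2};q^u)/\theta_1(\mathbf{z}^2 q^{-1/2};q)$, since both factors degenerate and the limit must be taken consistently.
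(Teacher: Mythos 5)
Your proposal is correct and follows essentially the same route as the paper: the paper's proof is precisely the combination of the twisted Zhu algebra computation (\ref{formula59}) with Proposition \ref{admissible} for the classification, together with the explicit theta-function characters and the relations $\sigma^{-\frac{1}{2}}(\lambda(j,u))=-\sigma^{-\frac{1}{2}}(\lambda(u-1-j,u))$, $\sigma^{-\frac{1}{2}}(\Delta(j,u))=\sigma^{-\frac{1}{2}}(\Delta(u-1-j,u))$ recorded in Section \ref{bal1}, and you simply fill in the details (distinctness of the Zhu-algebra roots, the leading-exponent count) that the paper leaves implicit. Your use of charge conjugation plus the classification to upgrade the weight relations to the full $q$-series identity is a clean minor variant of the paper's implicit theta-function argument; note also that your final worry is unfounded, since neither $\theta_{1}(\mathbf{z}^{2}q^{-j-\frac{1}{2}};q^{u})$ nor $\theta_{1}(\mathbf{z}^{2}q^{-\frac{1}{2}};q)$ vanishes at $\mathbf{z}=1$ (the half-integer shift removes exactly the zeros that obstruct the untwisted specialization), so the direct verification is also unproblematic.
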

%%\begin{thm}{\color{red} replace this with a well stated theorem as in thm 1.6 in the intro. For $A^{(1)}_{1}$ at boundary admissible level, since all irreducible twisted modules are ordinary modules, one can take Cartan parameter $\mathbf{z}=1$ and level parameter $\mathbf{y}=1$, respectively and get well-defined $q$-series characters, we find that the $q$-series characters satisfy the following relation,%%
%%\begin{equation}
%%\mathrm{ch}[\sigma^{-\frac{1}{2}}(L(\Lambda_{k,j}))](q)=\mathrm{ch}[\sigma^{-\frac{1}{2}}(L(\Lambda_{k,u-1-j}))](q)
%%\end{equation}%%
%%Following from this relation, we find that the number of independence $q$ series characters is $\frac{u+1}{2}$.  Now let us give some concrete examples for small values of $u$.}
%%\end{thm}%%
Now let us give some concrete examples for small values of $u$.
\begin{ex}
Let us consider $\hat{\mathfrak{g}}=A^{(1)}_{1}$ at level $k=-\frac{4}{3}$,  the number of independence $q$-series characters is two, we denote these two characters of twisted modules as $\mathrm{ch}\left[\sigma^{-\frac{1}{2}}(L(\Lambda_{k,0}))\right]$ and $\mathrm{ch}\left[\sigma^{-\frac{1}{2}}(L(\Lambda_{k,1}))\right]$. They are annihilated
by a second-order $\Gamma^{0}(2)$-MLDE which we display here:
\begin{align}
\left(D^{(2)}_{q}-\frac{1}{96}\bar{\Theta}_{1,1}(\tau)\right)\mathrm{ch}\left[\sigma^{-\frac{1}{2}}(L(\Lambda_{k,i}))\right](q)=0\label{mlde1},\quad i=0,1.
\end{align}
%\begin{align}
%\left(D^{(2)}_{q}-\frac{1}{96}\bar{\Theta}_{1,1}(\tau)\right)\mathrm{ch}\left[\sigma^{-\frac{1}{2}}(L(\Lambda_{k,1}))\right](q)=0
%\label{mlde2}
%\end{align}
Since the modular form  $M_{2k}(\Gamma^{0}(2))$ spanned by the $\Theta_{r,s}$ which can be rewritten in terms of twisted Eisenstein series, we can rewrite above $\mathrm{MLDE}$ (\ref{mlde1}) as
\begin{equation}
	\begin{aligned}
		&\left(D^{(2)}_{q}+\frac{4}{3}\mathbbm{E}_{4}\begin{bmatrix}-1\\1\end{bmatrix}+\frac{28}{3}\mathbbm{E}_{4}\begin{bmatrix}1\\-1\end{bmatrix}+\frac{28}{3}\mathbbm{E}_{4}\begin{bmatrix}-1\\-1\end{bmatrix}\right)\mathrm{ch}\left[\sigma^{-\frac{1}{2}}(L(\Lambda_{k,i}))\right](q)=0,\quad i=0,1.\\
%		&\left(D^{(2)}_{q}+\frac{4}{3}\mathbbm{E}_{4}\begin{bmatrix}-1\\1\end{bmatrix}+\frac{28}{3}\mathbbm{E}_{4}\begin{bmatrix}1\\-1\end{bmatrix}+\frac{28}{3}\mathbbm{E}_{4}\begin{bmatrix}-1\\-1\end{bmatrix}\right)\mathrm{ch}\left[\sigma^{-\frac{1}{2}}(L(\Lambda_{k,1}))\right](q)=0
	\end{aligned}
\end{equation}
\end{ex}
Actually, the twisted module character  $\mathrm{ch}\left[\sigma^{-\frac{1}{2}}(L(\Lambda_{k,1}))\right]\big|_{y=1}$ has two different physical interpretations. In \cite{Cordova:2017mhb}, the authors  computed the defect Schur indices of  the $(A_{1},A_{3})$ Argyres-Douglas theory,
\begin{align}
&\mathcal{I}_{\mathbb{S}}(q,x)=(q)^{2}_{\infty}\sum^{\infty}_{\substack{\ell_1,...,\ell_{3},\\k_1,...,k_{3}=0}}\f{(-1)^{\sum_{i=1}^{3}(k_i+\ell_i)}q^{\f12 \sum_{i=1}^{3}(k_i+\ell_i)+\ell_{2}(\ell_{1}+\ell_{3})}}{\prod_{i=1}^{3}(q)_{k_i}(q)_{\ell_i}} (x)^{\ell_1-k_1}\left(q^{\f{\ell_1-k_1}{2}}+q^{\f{k_1-\ell_1}{2}}-q^{\f{\ell_1+k_1}{2}}\right)\delta_{k_{2},\ell_{2}}\delta_{k_1+k_{3},\ell_{1}+\ell_{3}}.	
\end{align}
The corresponding VOA of the $(A_1, A_3)$ AD theory is just $L_{-\frac{4}{3}}(A_1^{(1)})$. One can check that the above surface defect index agrees with the twisted module character $\mathrm{ch}\left[\sigma^{-\frac{1}{2}}(L(\Lambda_{k,1}))\right]\big|_{y=1}$ with $x$ replaced by $\mathbf{z}^{2}$,
\begin{align}
\mathcal{I}_{\mathbb{S}}(q,\mathbf{z}^{2})=\text{ch}\left[\sigma^{-\frac{1}{2}}(L(\Lambda_{k,1}))\right]\big|_{y=1}.
\end{align}
In \cite{Fluder:2017oxm}, the authors compute the lens space index of the $(A_{1},A_{3})$ AD theory. For example, we have checked that  $\mathrm{ch}\left[\sigma^{-\frac{1}{2}}(L(\Lambda_{k,1}))\right]\big|_{y=1}$ agrees with the lens space index $\mathcal{I}^{\text{Mac}}_{(A_{1},D_{3})}\big|_{t=1}$ up to an overall factor by identify their "twisting parameter" with the spectral flow parameter. One  advantage of our expression is that operator spectrum of the 4d theory is much easier to read off, and modular properties are also apparent.
\begin{ex}
Let us consider $\hat{\mathfrak{g}}=A^{(1)}_{1}$ at level $k=-\frac{8}{5}$,  the number of independent $q$-series characters is three, we denote these three characters of twisted modules as $\mathrm{ch}[\sigma^{-\frac{1}{2}}(L(\Lambda_{k,j}))](q), j=0,1,2$. They satisfy a third-order $\Gamma^{0}(2)$-MLDE.
\begin{equation}
\left[D^{(3)}_{q}-\left(\frac{7}{450}\bar{\Theta}_{0,2}(\tau)+\frac{31}{1800}\bar{\Theta}_{1,1}(\tau)\right)D^{(1)}_{q}-\frac{1}{400}\bar{\Theta}_{1,2}(\tau)\right]\mathrm{ch}\left[\sigma^{-\frac{1}{2}}(L(\lambda_{k,j}))\right](q)=0, \quad j=0,1,2.
\end{equation}
\end{ex}
\begin{ex}
Let us consider $\hat{\mathfrak{g}}=A^{(1)}_{1}$ at level $k=-\frac{12}{7}$,  the number of independent $q$-series characters is four, we denote these three characters of twisted modules as $\mathrm{ch}[\sigma^{-\frac{1}{2}}(L(\Lambda_{k,j}))], j=0,1, \cdots, 3$. They satisfy a fourth-order $\Gamma^{0}(2)$-MLDE.
\begin{equation}
\begin{aligned}
& [D^{(4)}_{q}-\left(\frac{1}{18} \bar{\Theta}_{0,2}(\tau)+\frac{17}{1008}\bar{\Theta}_{1,1}(\tau)\right)D^{(2)}_{q}
\\
&+\left(\frac{50}{9261}\bar{\Theta}_{0,3}(\tau)-\frac{883}{49392}\bar{\Theta}_{1,2}(\tau)\right)D^{(1)}_{q}+\left(\frac{9}{10976}\bar{\Theta}_{1,3}(\tau)-\frac{225}{175616}\bar{\Theta}_{2,2}(\tau)\right)]\mathrm{ch}\left[\sigma^{-\frac{1}{2}}(L(\Lambda_{k,j}))\right](q)=0.
\end{aligned}
\end{equation}
\end{ex}

\subsection{$A^{(1)}_{1}$ at admissible level $k=-\frac{1}{2}$}
For $A^{(1)}_{1}$ at level $k=-\frac{1}{2}$, we can write down characters of admissible highest weight modules following \cite{Ridout:2008nh},
\begin{align}
&\text{ch}[\mathcal{L}_{0}]=\frac{\mathbf{y}^{-\frac{1}{2}}}{2}\left[\frac{\eta(\tau)}{\theta_{4}(\mathbf{z};q)}+\frac{\eta(\tau)}{\theta_{3}(\mathbf{z};q)}\right], \quad \text{ch}[\mathcal{D}^{+}_{-1/2}]=\frac{\mathbf{y}^{-\frac{1}{2}}}{2}\left[\frac{-i\eta(\tau)}{\theta_{1}(\mathbf{z};q)}+\frac{\eta(\tau)}{\theta_{2}(\mathbf{z};q)}\right]\\
&\text{ch}[\mathcal{L}_{1}]=\frac{\mathbf{y}^{-\frac{1}{2}}}{2}\left[\frac{\eta(\tau)}{\theta_{4}(\mathbf{z};q)}-\frac{\eta(\tau)}{\theta_{3}(\mathbf{z};q)}\right], \quad \text{ch}[\mathcal{D}^{+}_{-3/2}]=\frac{\mathbf{y}^{-\frac{1}{2}}}{2}\left[\frac{-i\eta(\tau)}{\theta_{1}(\mathbf{z};q)}-\frac{\eta(\tau)}{\theta_{2}(\mathbf{z};q)}\right]
\end{align}
 The characters $\text{ch}\left[\sigma^{-\frac{1}{2}}(M)\right]$ of associated $\mathbb{Z}_{2}$-twisted modules from $\ell=-\frac{1}{2}$ spectral flow are,
\begin{equation}
\begin{aligned}
&\mathrm{ch}\left[\sigma^{-\frac{1}{2}}(\mathcal{L}_{0})\right](\mathbf{y},\mathbf{z},q)=\frac{\mathbf{y}^{-\frac{1}{2}}\mathbf{z}^{\frac{1}{4}}q^{-\frac{1}{32}}}{2}\left[\frac{\eta(q\tau)}{\theta_{4}(\mathbf{z}q^{-\frac{1}{4}};q)}+\frac{\eta(\tau)}{\theta_{3}(\mathbf{z}q^{-\frac{1}{4}};q)}\right]\\
&\mathrm{ch}\left[\sigma^{-\frac{1}{2}}(\mathcal{L}_{1})\right](\mathbf{y},\mathbf{z},q)=\frac{\mathbf{y}^{-\frac{1}{2}}\mathbf{z}^{\frac{1}{4}}q^{-\frac{1}{32}}}{2}\left[\frac{\eta(\tau)}{\theta_{4}(\mathbf{z}q^{-\frac{1}{4}};q)}-\frac{\eta(\tau)}{\theta_{3}(\mathbf{z}q^{-\frac{1}{4}};q)}\right]\\
&\mathrm{ch}\left[\sigma^{-\frac{1}{2}}(\mathcal{D}^{+}_{-1/2})\right](\mathbf{y},\mathbf{z},q)=\frac{\mathbf{y}^{-\frac{1}{2}}\mathbf{z}^{\frac{1}{4}}q^{-\frac{1}{32}}}{2}\left[\frac{-i\eta(\tau)}{\theta_{1}(\mathbf{z}q^{-\frac{1}{4}};q)}+\frac{\eta(\tau)}{\theta_{2}(\mathbf{z}q^{-\frac{1}{4}};q)}\right]\\
&\mathrm{ch}\left[\sigma^{-\frac{1}{2}}(\mathcal{D}^{+}_{-3/2})\right](\mathbf{y},\mathbf{z},q)=\frac{\mathbf{y}^{-\frac{1}{2}}\mathbf{z}^{\frac{1}{4}}q^{-\frac{1}{32}}}{2}\left[\frac{-i\eta(\tau)}{\theta_{1}(\mathbf{z}q^{-\frac{1}{4}};q)}-\frac{\eta(\tau)}{\theta_{2}(\mathbf{z}q^{-\frac{1}{4}};q)}\right]
\end{aligned}
\end{equation}
For the character $\mathrm{ch}\left[\mathcal{L}_{0}\right]$ of vacuum module of  $L_{-1/2}(\mathfrak{sl}_{2})$, it is a solution of a third-order MLDE  under full $SL(2,\mathbb{Z})$ group,
\begin{equation}
\left[D^{(3)}_{q}-\frac{235}{4}	\mathbbm{E}_{4}(\tau)D^{(1)}_{q}-\frac{455}{8}\mathbbm{E}_{6}(\tau)\right]\mathrm{ch}[\mathcal{L}_{0}](q)=0
\end{equation}
There are two independent well-defined $q$-series characters of $\mathbb{Z}_{2}$-twisted modules denoted by $\mathrm{ch}[\sigma^{-\frac{1}{2}}(\mathcal{L}_{0})](q)$ and $\mathrm{ch}[\sigma^{-\frac{1}{2}}(\mathcal{L}_{1})](q)$. They satisfy a second-order $\Gamma^{0}(2)$ MLDE.
\begin{equation}
\left[D^{(2)}_{q}-\frac{5}{48}\bar{\Theta}_{0,1}(\tau)D^{(1)}_{q}+\left(\frac{25}{9216}\bar{\Theta}_{0,2}(\tau)-\frac{41}{9216}\bar{\Theta}_{1,1}(\tau)\right)\right]\mathrm{ch}[\sigma^{-\frac{1}{2}}(\mathcal{L}_{i})](q)=0,\quad i=0,1.
\end{equation}

\subsection{$A^{(1)}_{2}$ at boundary admissible level $k=-\frac{3}{2}$}

Consider the boundary principal admissible weight modules of $A^{(1)}_{2}$ at the boundary admissible level $k=-\frac{3}{2}$. There are four irreducible admissible highest weight modules of affine Lie algebra $A^{(1)}_{2}$, they are exactly the complete list of irreducible weak $L_{-3/2}(\mathfrak{sl}_{3})$ modules from category $\mathcal{O}$ \cite{Arakawa:2012xrk,pervse2008vertex}. Their characters can be written in terms of Jacobi theta function \cite{kac1988modular,kac2017remark}.
\begin{equation}
	\begin{aligned}
		&\text{ch}\left[\mathcal{L}\left(-\frac{3}{2}\Lambda_{0}\right)\right](\mathbf{y},\mathbf{z}_{1},\mathbf{z}_{2},q)=\mathbf{y}^{-\frac{3}{2}}\left(\frac{\eta(2\tau)}{\eta(\tau)}\right)^{-1}\frac{\theta_{1}\left(\mathbf{z}_{1};q^{2}\right)\theta_{1}\left(\mathbf{z}_{2};q^{2}\right)\theta_{1}\left(\mathbf{z}_{1}\mathbf{z}_{2};q^{2}\right)}{\theta_{1}\left(\mathbf{z}_{1};q\right)\theta_{1}\left(\mathbf{z}_{2};q\right)\theta_{1}\left(\mathbf{z}_{1}\mathbf{z}_{2};q\right)}\\
		&\text{ch}\left[\mathcal{L}\left(-\frac{3}{2}\Lambda_{1}\right)\right](\mathbf{y},\mathbf{z}_{1},\mathbf{z}_{2},q)=-\mathbf{y}^{-\frac{3}{2}}\left(\frac{\eta(2\tau)}{\eta(\tau)}\right)^{-1}\frac{\theta_{1}(\mathbf{z}_{2};q^{2})\theta_{4}(\mathbf{z}_{1};q^{2})\theta_{4}(\mathbf{z}_{1}\mathbf{z}_{2};q^{2})}{\theta_{1}(\mathbf{z}_{1};q)\theta_{1}(\mathbf{z}_{2};q)\theta_{1}(\mathbf{z}_{1}\mathbf{z}_{2};q)}\\
		&\text{ch}\left[\mathcal{L}\left(-\frac{3}{2}\Lambda_{2}\right)\right](\mathbf{y},\mathbf{z}_{1},\mathbf{z}_{2},q)=-\mathbf{y}^{-\frac{3}{2}}\left(\frac{\eta(2\tau)}{\eta(\tau)}\right)^{-1}\frac{\theta_{1}(\mathbf{z}_{1};q^{2})\theta_{4}(\mathbf{z}_{1}\mathbf{z}_{2};q^{2})\theta_{4}(\mathbf{z}_{2};q^{2})}{\theta_{1}(\mathbf{z}_{1};q)\theta_{1}(\mathbf{z}_{2};q)\theta_{1}(\mathbf{z}_{1}\mathbf{z}_{2};q)}\\
		&\text{ch}\left[\mathcal{L}\left(-\frac{\rho}{2}\right)\right](\mathbf{y},\mathbf{z}_{1},\mathbf{z}_{2},q)=\mathbf{y}^{-\frac{3}{2}}(\mathbf{z}_{1}\mathbf{z}_{2})^{\frac{3}{2}}q^{\frac{3}{2}}\left(\frac{\eta(2\tau)}{\eta(\tau)}\right)^{-1}\frac{\theta_{1}(\mathbf{z}_{2}^{-1}q^{-1};q^{2})\theta_{1}(\mathbf{z}_{1}^{-1}q^{-1};q^{2})\theta_{1}(\mathbf{z}_{1}^{-1}\mathbf{z}_{2}^{-1}q^{-2};q^{2})}{\theta_{1}(\mathbf{z}_{1};q)\theta_{1}(\mathbf{z}_{2};q)\theta_{1}(\mathbf{z}_{1}\mathbf{z}_{2};q)}
	\end{aligned}
\end{equation}
where $\Lambda_{i}$ denote the fundamental weights of affine Lie algebra $A^{(1)}_{2}$ and $\rho$ is the affine Weyl vector. Letting $z=\sum_{i=1}^{2}\mathfrak{z}_{i}\bar{\Lambda}_{i}$, where $\bar{\Lambda}_{i}$ are the fundamental weights of finite Lie algebra $\mathfrak{g}=\mathfrak{sl}_{3}$, we define $\mathbf{z}_{i}=e^{2\pi i \mathfrak{z}_{i}}$ which appeared in Jacobi theta function.

Now, we consider the action of spectral flow on these irreducible highest weight modules.
Firstly, we consider the  spectral flow along $\frac{1}{2}\bar{\Lambda}^{\vee}_{1}$ direction, and the character becomes,
\begin{equation}
	\text{ch}\left[\sigma^{\frac{1}{2}\Lambda_{1}^{\vee}}\left(\mathcal{L}\left(-\frac{3}{2}\Lambda_{0}\right)\right)\right](\mathbf{y},\mathbf{z}_{1},\mathbf{z}_{2},q)=(\mathbf{y}\mathbf{z}_{1}^{\frac{1}{3}}\mathbf{z}_{2}^{\frac{1}{6}}q^{\frac{1}{12}})^{-\frac{3}{2}}\left(\frac{\eta(2\tau)}{\eta(\tau)}\right)^{-1}\frac{\theta_{1}\left(\mathbf{z}_{1}q^{\frac{1}{2}};q^{2}\right)\theta_{1}\left(\mathbf{z}_{2};q^{2}\right)\theta_{1}\left(\mathbf{z}_{1}\mathbf{z}_{2}q^{\frac{1}{2}};q^{2}\right)}{\theta_{1}\left(\mathbf{z}_{1}q^{\frac{1}{2}};q\right)\theta_{1}\left(\mathbf{z}_{2};q\right)\theta_{1}\left(\mathbf{z}_{1}\mathbf{z}_{2}q^{\frac{1}{2}};q\right)}.
\end{equation}
This spectral flowed module is an ordinary module. It satisfies a second-order $\Gamma^{0}(2)$-modular linear differential equation,
\begin{equation}
	\left(D^{(2)}_{q}-\frac{5}{576}\bar{\Theta}_{0,2}(\tau)-\frac{11}{576}\bar{\Theta}_{1,1}(\tau)\right)\text{ch}\left[\sigma^{\frac{1}{2}\Lambda_{1}^{\vee}}\left(\mathcal{L}\left(-\frac{3}{2}\Lambda_{0}\right)\right)\right](q)=0.
\end{equation}
Secondly, consider the spectral flow of the character $\text{ch}\left[\mathcal{L}\left(-\frac{\rho}{2}\right)\right]$ along $\frac{1}{3}(\bar{\Lambda}^{\vee}_{1}+\bar{\Lambda}^{\vee}_{2})$ direction,
\begin{equation}\label{twist}
	\begin{aligned}
		\text{ch}\left[\sigma^{\frac{1}{3}\fwt{1}^{\vee}+\frac{1}{3}\fwt{2}^{\vee}}\left(\mathcal{L}\left(-\frac{\rho}{2}\right)\right)\right](y,\mathbf{z}_{1},\mathbf{z}_{2},q)&=(\mathbf{y}\mathbf{z}_{1}^{\frac{1}{3}}\mathbf{z}_{2}^{\frac{1}{3}}q^{\frac{1}{9}})^{-\frac{3}{2}}(\mathbf{z}_{1}\mathbf{z}_{2}q^{\frac{3}{2}})^{\frac{3}{2}}q^{\frac{3}{2}}\left(\frac{\eta(2\tau)}{\eta(\tau)}\right)^{-1}\\
		&\times\frac{\theta_{1}((\mathbf{z}_{2}q^{\frac{1}{3}})^{-1}q^{-1};q^{2})\theta_{1}((\mathbf{z}_{1}q^{\frac{1}{3}})^{-1}q^{-1};q^{2})\theta_{1}((\mathbf{z}_{1}\mathbf{z}_{2}q^{-\frac{2}{3}})^{-1}q^{-2};q^{2})}{\theta_{1}(\mathbf{z}_{1}q^{\frac{1}{3}};q)\theta_{1}(\mathbf{z}_{2}q^{\frac{1}{3}};q)\theta_{1}(\mathbf{z}_{1}\mathbf{z}_{2}q^{\frac{2}{3}};q)}
	\end{aligned}
\end{equation}
This spectral flowed character matches with the lens space index of $(A_1, D_4)$ AD theory \cite{Fluder:2017oxm} with suitable change of variables.

\subsection{Bershadsky-Polyakov Algebra $\mathrm{BP}^{k}$ with $k=-\frac{9}{4}$}
In previous examples, we consider the spectral flowed modules of affine Lie algebra $\hat{\mathfrak{g}}$. Now we consider an example of the affine $W$-algebra \cite{bouwknegt1995w,Feigin:1990pn,feigin1990representations,kac2003quantum}, the $W^{k}(\mathfrak{sl}_{3},f_{\mathrm{min}})$ which agrees with the $\mathrm{BP}^{k}$-algebra  defined in \cite{Bershadsky:1990bg}.  First, let us review the definition of $\text{BP}^{k}$-algebra.
\begin{dfn}
	\cite{Fehily:2020bif} Given $k\in\mathbb{C}$, $k\neq-3$, the level-k universal Bershadsky-Polyakov algebra $\text{BP}\,^{k}$ is the vertex operator algebra with vacuum $\bm{1}$ that is strongly and freely generated by fields $J(z)$, $G^{+}(z)$, $G^{-}(z)$ and $L(z)$ satisfying the complicated operator product expansions. The conformal weights of the generating fields $J(z)$, $G^{+}(z)$, $G^{-}(z)$ and $L(z)$ are $1$, $\frac{3}{2}$, $\frac{3}{2}$ and $2$ respectively, the central charge is,
	\begin{equation}
		c^{\mathrm{BP}}_{u,v}=-\frac{(2k+3)(3k+1)}{k+3}
	\end{equation}
\end{dfn}
The action of the spectral flow automorphism $\sigma^{\ell}$, $\ell\in\mathbb{Z}$ of the vertex algebra $\text{BP}\,^{k}$ on the modes of the generating field $J(z)$, $G^{+}(z)$, $G^{-}(z)$ and $L(z)$ is
\begin{equation}
	\begin{aligned}
		\sigma^{l}(J_{n})&=J_{n}-\frac{2k+3}{3}l\delta_{n,0}\bm{1},\\
		\sigma^{l}(G^{+}_{r})&=G^{+}_{r-l},\\
		\sigma^{l}(G^{-}_{r})&=G^{-}_{r+l},\\
		\sigma^{l}(L_{n})&=L_{n}-lJ_{n}+\frac{2k+3}{6}l^{2}\delta_{n,0}\bm{1}.
	\end{aligned}
\end{equation}
When $\ell$ is a half-integer, $\sigma^l$ exchanges  twisted and untwisted mode algebras \cite{Fehily:2020bif}.

If we consider the $|\lambda,\Delta\rangle\in M$ is a state of weight $\lambda$ and conformal dimension $\Delta$ for any module $M$, then the state $(\sigma^{\ell})^{*}|\lambda,\Delta\rangle\in(\sigma^{\ell})^{*}(M)$ satisfied,
\begin{equation}
	\begin{aligned}
		&h_{0}(\sigma^{\ell})^{*}|\lambda,\Delta\rangle=(\lambda+\ell\frac{2k+3}{3})(\sigma^{\ell})^{*}|\lambda,\Delta\rangle\\
		&L_{0}(\sigma^{\ell})^{*}|\lambda,\Delta\rangle=(\Delta+\ell\lambda+\frac{2k+3}{6}\ell^{2})(\sigma^{\ell})^{*}|\lambda,\Delta\rangle
	\end{aligned}
\end{equation}
For a $\mathrm{BP}^{k}$-algebra module $M$, the character has been defined in \cite{Fehily:2020bif},
\begin{equation}
	\text{ch}[M](\theta|\zeta|\tau)=\mathbf{y}^{\kappa}\text{tr}_{M}\left(\mathbf{z}^{J_{0}}q^{L_{0}-c^{\mathrm{BP}}_{u,v}/24}\right), \quad \kappa=\frac{2k+3}{6}
\end{equation}
where $\mathbf{y}=e^{2\pi i \theta}$, $\mathbf{z}=e^{2\pi i \zeta}$ and $q=e^{2\pi i \tau}$. The character of spectral flowed module $\sigma^{\ell}(M)$ for $\ell\in \frac{1}{2}\mathbb{Z}$ given by Lemma 4.3 in \cite{Fehily:2020bif},
\begin{equation}
	\mathrm{ch}[\sigma^{\ell}(M)]\left(\theta|\zeta|\tau\right)=\mathrm{ch}[M]\left(\theta+2\ell\zeta+\ell^{2}\tau|\zeta+\ell\tau|\tau\right)
\end{equation}
We consider the special case which the level $k=-\frac{9}{4}$ and $c^{\mathrm{BP}}_{u,v}=-\frac{23}{2}$, the highest weight vector $|\lambda,\Delta\rangle=|\frac{1}{4},-\frac{3}{8}\rangle$, and spectral flow parameter $\ell=\frac{1}{2}$. After spectral flow, the weight and conformal dimension become,
\begin{equation}
	\lambda^{'}=\frac{1}{4}+\frac{1}{2}\times (-\frac{1}{2})=0, \qquad \Delta'=-\frac{3}{8}+\frac{1}{2}\times \frac{1}{4}-\frac{1}{4}\times \frac{1}{4}=-\frac{5}{16}
\end{equation}
According to \cite{Cordova:2017mhb}, the character of spectral flowed module can be written as follows,
\begin{equation}
	\text{ch}[\sigma^{\frac{1}{2}}(M_{|\frac{1}{4},-\frac{3}{8}\rangle})](q)=q^{\frac{1}{6}}\left(1+4q+10q^{2}+24q^{3}+51q^{4}+100q^{5}+\mathcal{O}(q^{6})\right)
\end{equation}
This character satisfies a third-order MLDE  under full $SL(2,\mathbb{Z})$ group,
\begin{equation}
	\left[D^{(3)}_{q}-25	\mathbbm{E}_{4}(\tau)D^{(1)}_{q}-175\mathbbm{E}_{6}(\tau)\right]\text{ch}[\sigma^{\frac{1}{2}}(M_{|\frac{1}{4},-\frac{3}{8}\rangle})](q)=0
\end{equation}

\section{$\mathfrak{d}_{4}$ with non-admissible level $k=-2$}\label{section6}
In this section, we propose a relation between simple modules and spectral flowed modules of the non-admissible affine vertex algebra $\mathcal{L}_{\mathfrak{d}_{4}}(-2\Lambda_{0})$. We also find an ordinary module  $\sigma^{\frac{1}{2}\bar{\Lambda}_2}(\mathcal{L}_{\mathfrak{d}_{4}}(-\Lambda_{2}))$, whose character satisfies a second-order $\Gamma^{0}(2)$ MLDE.

Firstly, recall the result for the simple modules of $\mathcal{L}_{\mathfrak{d}_{4}}(-2\Lambda_{0})$.
\begin{thm}
\cite{pervse2013note} The set $\{\mathcal{L}_{\mathfrak{d}_{4}}(-2\Lambda_{0}),\mathcal{L}_{\mathfrak{d}_{4}}(-2\Lambda_{1}),\mathcal{L}_{\mathfrak{d}_{4}}(-2\Lambda_{3}),\mathcal{L}_{\mathfrak{d}_{4}}(-2\Lambda_{4}),\mathcal{L}_{\mathfrak{d}_{4}}(-\Lambda_{2})\}$ provides a complete list of irreducible weak $\mathcal{L}_{\mathfrak{d}_{4}}(-2\Lambda_{0})$-modules from the Category $\mathcal{O}.$ $\mathcal{L}_{\mathfrak{d}_{4}}(-2\Lambda_{0})$ is the unique irreducible ordinary module for $\mathcal{L}_{\mathfrak{d}_{4}}(-2\Lambda_{0})$. And every ordinary $\mathcal{L}_{\mathfrak{d}_{4}}(-2\Lambda_{0})$-module is completely reducible.
\end{thm}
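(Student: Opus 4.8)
The plan is to classify these modules through Zhu's algebra. Recall that $A(V^{-2}(\mathfrak{d}_4)) \cong U(\mathfrak{d}_4)$ via the map $F$ recorded earlier, and hence $A(\mathcal{L}_{\mathfrak{d}_4}(-2\Lambda_0)) \cong U(\mathfrak{d}_4)/I$, where $I$ is the two-sided ideal obtained as the image under $F$ of the maximal submodule $J$ of $V^{-2}(\mathfrak{d}_4)$. By Zhu's correspondence, irreducible weak modules from category $\mathcal{O}$ are in bijection with irreducible highest weight $\mathfrak{d}_4$-modules $L(\mu)$, $\mu\in\mathfrak{h}^*$, annihilated by $I$, the associated affine module being the one with finite part $\mu$ at level $-2$. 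Thus everything reduces to computing $I$ and then solving the constraint $I\cdot L(\mu)=0$.

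First I would locate the singular vectors generating $J$. Since $\mathfrak{d}_4$ at level $-2=-h^{\vee}/6-1$ sits in the Deligne series, the associated variety of $\mathcal{L}_{\mathfrak{d}_4}(-2\Lambda_0)$ is the closure $\overline{\mathbb{O}_{\mathrm{min}}}$ of the minimal nilpotent orbit, so $J$ is generated in conformal weight $2$ by a singular vector $v_{\mathrm{sing}}$ lying in the degree-$2$ subspace $(S^2(\mathfrak{d}_4)\oplus\mathfrak{d}_4)\mathbf{1}$ of $V^{-2}(\mathfrak{d}_4)$. I would decompose $S^2(\mathfrak{d}_4)$ into irreducibles and identify the components carrying $v_{\mathrm{sing}}$, namely the complement of the Cartan component $V(2\theta)$ together with the normalization of the Casimir line (the Joseph relations). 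Applying $F$ then sends $v_{\mathrm{sing}}$ to a finite set of quadratic elements $w_1,\dots,w_m\in U(\mathfrak{d}_4)$ spanning a sum $W$ of irreducible $\mathfrak{d}_4$-submodules, so that $I=U(\mathfrak{d}_4)\,W\,U(\mathfrak{d}_4)$.

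Next I would turn $I\cdot L(\mu)=0$ into explicit equations on $\mu$. Because $W$ is a sum of irreducible $\mathfrak{d}_4$-modules and $\mathfrak{n}_+$ annihilates the highest weight vector $v_\mu$, the ideal $I$ kills $L(\mu)$ if and only if $W$ kills $v_\mu$, and for each irreducible constituent this is controlled by the action of finitely many of its weight components, yielding a small polynomial system in $\mu$ whose coefficients are the Casimir eigenvalue and linear functionals of $\mu$. Using the $S_3$ triality of $\mathfrak{d}_4$, which permutes $\Lambda_1,\Lambda_3,\Lambda_4$ and fixes $\Lambda_2$, I would reduce to a few representative weights, solve, and read off that the only solutions are $\mu\in\{0,-2\bar{\Lambda}_1,-2\bar{\Lambda}_3,-2\bar{\Lambda}_4,-\bar{\Lambda}_2\}$, which are precisely the five modules in the list.

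Finally, for the ordinary-module statements: an irreducible module is ordinary exactly when its top space $L(\mu)$ is finite-dimensional, i.e. $\mu$ is dominant integral for $\mathfrak{d}_4$; among the five weights only $\mu=0$ qualifies, so $\mathcal{L}_{\mathfrak{d}_4}(-2\Lambda_0)$ is the unique irreducible ordinary module. For complete reducibility I would pass to top spaces: ordinary modules carry a semisimple $L_0$-action, so it suffices to verify $\mathrm{Ext}^1(\mathcal{L}_{\mathfrak{d}_4}(-2\Lambda_0),\mathcal{L}_{\mathfrak{d}_4}(-2\Lambda_0))=0$ in the ordinary category. The finite-dimensional modules over $A\cong U(\mathfrak{d}_4)/I$ form a semisimple category, being a full subcategory of finite-dimensional $\mathfrak{d}_4$-modules, which is semisimple by Weyl's theorem; hence any extension of the vacuum tops splits as an $A$-module, and since $L_0$ acts without Jordan blocks on ordinary modules and the two factors share conformal weight $0$, the affine extension splits as well. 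I expect the principal obstacle to be the explicit determination of $v_{\mathrm{sing}}$, its image $\{w_i\}$ under $F$, and the resulting polynomial system, namely the representation-theoretic bookkeeping of $S^2(\mathfrak{d}_4)$ and the attendant case analysis, which triality only partially tames.
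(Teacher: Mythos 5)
First, a point of orientation: the paper does not prove this theorem at all --- it is quoted, with attribution, from Per\v{s}e's note \cite{pervse2013note}, so the only meaningful comparison is against the method of that reference. Your outline is essentially that method: realize Zhu's algebra $A(\mathcal{L}_{\mathfrak{d}_4}(-2\Lambda_0))$ as $U(\mathfrak{d}_4)/I$ with $I$ coming from a conformal-weight-two singular vector, convert ``$I$ annihilates $L(\mu)$'' into a polynomial system on $\mu$ (using ad-invariance of the generating subspace, which you invoke correctly), exploit triality, and then read off the ordinary/complete-reducibility statements from the classification. So you are not proposing a different route; the question is whether your sketch actually closes.

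It does not yet, for three concrete reasons. (1) Everything that makes the theorem true is in the steps you defer: the explicit singular vector (equivalently the Joseph relations for $\mathfrak{d}_4$), the image $W$ under $F$, the polynomial system, and its solution set. Nothing in the sketch certifies that the solutions are exactly $\{0,\,-2\overline{\Lambda}_1,\,-2\overline{\Lambda}_3,\,-2\overline{\Lambda}_4,\,-\overline{\Lambda}_2\}$; this is the entire content, not a routine verification. (2) There is a logical gap at the start: knowing that the associated variety of $\mathcal{L}_{\mathfrak{d}_4}(-2\Lambda_0)$ is $\overline{\mathbb{O}_{\mathrm{min}}}$ does \emph{not} imply that the maximal submodule $J$ of $V^{-2}(\mathfrak{d}_4)$ is generated by the weight-two singular vector. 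If you only use the subideal $J'=U(\hat{\mathfrak{g}})v_{\mathrm{sing}}\subseteq J$, you get a surjection $U(\mathfrak{d}_4)/F(J')\twoheadrightarrow A(\mathcal{L}_{\mathfrak{d}_4}(-2\Lambda_0))$, hence only the upper bound ``every irreducible is among the five.'' Existence --- that each candidate is killed by all of $J$, not merely by $J'$, and hence is a module over the \emph{simple} vertex algebra --- needs a separate argument; amusingly, Section 6 of this very paper supplies one for the four non-vacuum weights, by exhibiting them as spectral flows $\sigma^{\overline{\Lambda}_i}$ of the vacuum module. (3) In the complete-reducibility step, ``$L_0$ acts without Jordan blocks and the tops split'' is not by itself a proof. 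The standard closing move is a generation argument: in a non-split self-extension $0\to \mathcal{L}\to M\to \mathcal{L}\to 0$ of the vacuum module, the preimage of the vacuum vector of the quotient would be a singular vector of conformal weight $0$ generating all of $M$, forcing $\dim M_0=1$, which contradicts $\dim M_0=2$; that contradiction, not semisimplicity of $L_0$ alone, is what splits the extension. Relatedly, for uniqueness of the ordinary irreducible you should note that any irreducible ordinary module has a finite-dimensional top, hence is automatically one of the five category-$\mathcal{O}$ modules, before discarding the four non-dominant weights.
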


We consider the spectral flow automorphsim that act on this vacuum character. The fundamental weights $\bar{\Lambda}_{i}$ of finite Lie algebra $\mathfrak{d}_{4}$ can be written in terms of the linear combination of simple roots $\alpha_{i}$.
\begin{equation}
\begin{aligned}
&\bar{\Lambda}_{1}=\alpha_{1}+\alpha_{2}+\frac{1}{2}\alpha_{3}+\frac{1}{2}\alpha_{4}, \quad \bar{\Lambda}_{2}=\alpha_{1}+2\alpha_{2}+\alpha_{3}+\alpha_{4}\\
&\bar{\Lambda}_{3}=\frac{1}{2}\alpha_{1}+\alpha_{2}+\alpha_{3}+\frac{1}{2}\alpha_{4}, \quad \bar{\Lambda}_{4}=\frac{1}{2}\alpha_{1}+\alpha_{2}+\frac{1}{2}\alpha_{3}+\alpha_{4}
\end{aligned}
\end{equation}
Since the $\mathfrak{d}_{4}$ is simple-laced, then $\alpha_{i}=\alpha^{\vee}_{i}$ and $\bar{\Lambda}_{i}=\bar{\Lambda}^{\vee}_{i}$, we will use roots (weights) or coroots (coweights) without distinction. The highest root of finite Lie algebra $\mathfrak{d}_{4}$ is $\theta=\bar{\Lambda}_{2}$, therefore the marks and comarks of affine Lie algebra $\hat{\mathfrak{d}}_{4}$ are $(a_{i})=(a_{i}^{\vee})=(1,1,2,1,1)$, the level of affine weight $\Lambda=\sum_{i=0}^{4}\lambda_{i}\Lambda_{i}$ is then given by,
\begin{equation}
k=\lambda_{0}+\lambda_{1}+2\lambda_{2}+\lambda_{3}+\lambda_{4}
\end{equation}

\begin{thm}
The  $\mathcal{L}_{\mathfrak{d}_{4}}(-2\Lambda_{i})$ with $i=1,3,4$ is the same as spectral flowed  module $\sigma^{\bar{\Lambda}_{i}}(\mathcal{L}_{\mathfrak{d}_{4}}(-2\Lambda_{0}))$, respectively.
\end{thm}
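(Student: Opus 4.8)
The plan is to reduce the claimed module isomorphism to a comparison of highest-weight data, using the spectral flow action recorded in \eqref{spectral1}--\eqref{spectral2} (which generalize verbatim from $A^{(1)}_1$ to the coweight direction $\bar{\Lambda}_i$ of $\hat{\mathfrak{d}}_4$), and then to invoke invertibility of spectral flow together with the classification in \cite{pervse2013note}. First I would record that, because $\mathfrak{d}_4$ is simply laced, the identification $\bar{\Lambda}_i=\bar{\Lambda}_i^\vee$ lets us read $\bar{\Lambda}_i$ as a coweight; as such it pairs integrally with every root, so the mode shifts $e^\alpha_n\mapsto e^\alpha_{n-\langle\alpha,\bar{\Lambda}_i\rangle}$ stay integral and $\sigma^{\bar{\Lambda}_i}$ produces an \emph{untwisted} module (cf. Section~\ref{twm}). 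This also explains why only $i=1,3,4$ occur: $\bar{\Lambda}_2=\theta$ lies in the root (hence coroot) lattice, so $\sigma^{\bar{\Lambda}_2}$ is an affine Weyl translation and fixes the isomorphism class of the vacuum, whereas $\bar{\Lambda}_1,\bar{\Lambda}_3,\bar{\Lambda}_4$ represent the three nontrivial classes of $P^\vee/Q^\vee\cong\mathbb{Z}_2\times\mathbb{Z}_2$.

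Next I would carry out the weight computation. Applying the Cartan part of the spectral flow to the vacuum vector of $\mathcal{L}_{\mathfrak{d}_4}(-2\Lambda_0)$, the generalization of \eqref{spectral1} shifts the $j$-th Dynkin label of the flowed top vector by $k(\bar{\Lambda}_i|\alpha_j^\vee)=-2\,\delta_{ij}$, so its finite highest weight is exactly $-2\bar{\Lambda}_i$, matching the finite part of $-2\Lambda_i$. For the conformal dimension I would use the generalization of \eqref{spectral2}, which on the vacuum gives $\Delta'=\tfrac12 (\bar{\Lambda}_i|\bar{\Lambda}_i)k=-(\bar{\Lambda}_i|\bar{\Lambda}_i)$. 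Since $\bar{\Lambda}_1,\bar{\Lambda}_3,\bar{\Lambda}_4$ are the minuscule weights of $\mathfrak{d}_4$ one has $(\bar{\Lambda}_i|\bar{\Lambda}_i)=1$, so $\Delta'=-1$. On the other side, the Sugawara value for $\mathcal{L}_{\mathfrak{d}_4}(-2\Lambda_i)$ is
\begin{align*}
\Delta=\frac{(-2\bar{\Lambda}_i,\,-2\bar{\Lambda}_i+2\bar{\rho})}{2(k+h^\vee)}=\frac{4(\bar{\Lambda}_i,\bar{\Lambda}_i)-4(\bar{\Lambda}_i,\bar{\rho})}{2\cdot 4},
\end{align*}
and with $h^\vee=6$, $k=-2$, $(\bar{\Lambda}_i,\bar{\Lambda}_i)=1$ and $(\bar{\Lambda}_i,\bar{\rho})=3$ this equals $-1$. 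Thus the flowed vacuum and $\mathcal{L}_{\mathfrak{d}_4}(-2\Lambda_i)$ share the same finite highest weight, level, and minimal conformal weight.

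To finish, I would argue that matching this data pins down the isomorphism class. Spectral flow by a coweight is an invertible endofunctor (with inverse $\sigma^{-\bar{\Lambda}_i}$), so $\sigma^{\bar{\Lambda}_i}(\mathcal{L}_{\mathfrak{d}_4}(-2\Lambda_0))$ is again simple; because $\bar{\Lambda}_i$ is dominant the induced permutation of the affine simple roots is the diagram automorphism exchanging $\Lambda_0\leftrightarrow\Lambda_i$, so the flowed module is still of highest-weight type and lies in category $\mathcal{O}$ over the same simple vertex algebra. The classification recalled above (\cite{pervse2013note}) then leaves $\mathcal{L}_{\mathfrak{d}_4}(-2\Lambda_i)$ as the unique simple object in category $\mathcal{O}$ with the computed top datum, giving the asserted isomorphism for each $i\in\{1,3,4\}$.

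The main obstacle is this last structural input rather than the arithmetic: one must verify that $\sigma^{\bar{\Lambda}_i}$ really returns an \emph{ordinary, untwisted, highest-weight} module \emph{over the simple quotient} $\mathcal{L}_{\mathfrak{d}_4}(-2\Lambda_0)$ and not merely over $V^{-2}(\mathfrak{d}_4)$ or a relaxed variant. Concretely this means checking that the maximal ideal of $V^{-2}(\mathfrak{d}_4)$ is stable under the $\bar{\Lambda}_i$-spectral flow (equivalently, that the corresponding diagram automorphism preserves the singular vectors cutting out $\mathcal{L}_{\mathfrak{d}_4}(-2\Lambda_0)$), and that the $L_0$-grading of the image remains bounded below with finite-dimensional graded pieces; once this is in place, the weight and dimension match upgrades to a genuine module isomorphism.
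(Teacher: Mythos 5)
Your core argument is the same as the paper's: compute the action of the $\bar{\Lambda}_i$-spectral flow on the Cartan zero modes to see that the highest weight $-2\Lambda_0$ is carried to $-2\Lambda_i$, then use preservation of irreducibility under spectral flow to identify the flowed vacuum module with $\mathcal{L}_{\mathfrak{d}_{4}}(-2\Lambda_{i})$. The paper's proof does exactly this: it computes $\tau_{1}\tau_{2}\tau_{3}^{1/2}\tau_{4}^{1/2}(h^{j}_{0})$, observes the weight shift, and cites \cite{dong1996simple} for the fact that flow along $\bar{\Lambda}_{i}$ preserves irreducibility. Your additions --- the Sugawara conformal-weight check ($\Delta=-1$ on both sides, which is arithmetically correct) and the appeal to the classification of \cite{pervse2013note} to pin down the simple object --- are consistent elaborations of the same route rather than a different argument.

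Two corrections are needed. First, the ``main obstacle'' you flag at the end is not an obstacle, and leaving it open is the only real gap in your write-up: spectral flow here is implemented by Li's $\Delta$-operator (Proposition \ref{li1996local}, from \cite{li1996local}), which takes a module $M$ over the \emph{simple} vertex algebra $\mathcal{L}_{\mathfrak{d}_{4}}(-2\Lambda_{0})$ and produces $(M,Y_{M}(\Delta(z)\cdot,z))$, again a weak module over the same simple vertex algebra --- untwisted because the fundamental coweight $\bar{\Lambda}_{i}$ pairs integrally with every root. One never passes through $V^{-2}(\mathfrak{d}_{4})$, so there is no maximal-ideal stability to verify; and simplicity is preserved because $\sigma^{-\bar{\Lambda}_{i}}$ inverts the construction, which is precisely what the paper's citation of \cite{dong1996simple} supplies. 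Second, your side remark that $\sigma^{\bar{\Lambda}_{2}}$, being an affine Weyl translation, ``fixes the isomorphism class of the vacuum'' is false at this level: $\mathcal{L}_{\mathfrak{d}_{4}}(-2\Lambda_{0})$ is not integrable, so flow by a coroot-lattice element need not (and here does not) return an isomorphic module --- at negative levels such flows typically produce lower-bounded modules that are not of highest-weight type. The reason $i=2$ is absent from the statement is more mundane: $-2\Lambda_{2}$ has level $-4$ (since $\Lambda_{2}$ has comark $2$), and the level $-2$ module $\mathcal{L}_{\mathfrak{d}_{4}}(-\Lambda_{2})$ appearing in Per\v{s}e's list is simply not of the form produced by these flows of the vacuum.
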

\begin{proof}
The powers of $\tau_{i}$ acts as follows,
\begin{equation}
	\tau_{i}^{\ell}(h^{j}_{n})=h^{j}_{n}-\ell(\alpha_{i},\alpha_{i})\delta_{n,0}K
\end{equation}
We can compute the action of this automorphsim along $\bar{\Lambda}_{1}$ on $h^{i}_{0}$,
\begin{equation}
	\tau_{1}\tau_{2}\tau_{3}^{\frac{1}{2}}\tau_{4}^{\frac{1}{2}}(h^{1}_{0})=h^{1}_{0}+K, \quad \tau_{1}\tau_{2}\tau_{3}^{\frac{1}{2}}\tau_{4}^{\frac{1}{2}}(h^{2}_{0})=h^{2}_{0}
\end{equation}
\begin{equation}
	\tau_{1}\tau_{2}\tau_{3}^{\frac{1}{2}}\tau_{4}^{\frac{1}{2}}(h^{3}_{0})=h^{0}_{3}, \quad \tau_{1}\tau_{2}\tau_{3}^{\frac{1}{2}}\tau_{4}^{\frac{1}{2}}(h^{4}_{0})=h^{4}_{0}
\end{equation}
That means the spectral flow automorphsim change highest weight $-2\Lambda_{0}$ into another highest weight $-2\Lambda_{1}$. One can get highest weight $-2\Lambda_{3}$ and $-2\Lambda_{4}$ in the same way. According to cite \cite{dong1996simple}, the $\bar{\Lambda}_{i}$ direction preserves the irreduciblity of the module $\mathcal{L}_{\mathfrak{d}_{4}}(-2\Lambda_{i})$\footnote{We would like to thank Tomoyuki Arakawa and Kazuya Kawasetsu for pointing out this fact.}, this completes the proof.
\end{proof}

Actually, the closed form expressions of characters of all simple modules of $\mathcal{L}_{\mathfrak{d}_{4}}(-2\Lambda_{0})$ are conjectured from the SCFT/VOA correspondence as \cite{Pan:2021mrw, Zheng:2022zkm}
\begin{equation}
\begin{aligned}
&\text{ch}[\mathcal{L}_{\mathfrak{d}_{4}}(-2\Lambda_{0})]=\mathcal{I}_{0,4}\\
&\text{ch}[\mathcal{L}_{\mathfrak{d}_{4}}(-2\Lambda_{1})]=\mathcal{I}_{0,4}-2R_{1}\\
&\text{ch}[\mathcal{L}_{\mathfrak{d}_{4}}(-\Lambda_{2})]=-2\mathcal{I}_{0,4}+2R_{1}+2R_{2}\\
&\text{ch}[\mathcal{L}_{\mathfrak{d}_{4}}(-2\Lambda_{3})]=\mathcal{I}_{0,4}-R_{1}-R_{2}-R_{3}-R_{4}\\
&\text{ch}[\mathcal{L}_{\mathfrak{d}_{4}}(-2\Lambda_{4})]=\mathcal{I}_{0,4}-R_{1}-R_{2}-R_{3}+R_{4}
\end{aligned}
\end{equation}
where $\mathcal{I}_{0,4}$ is the Schur index of an $SU(2)$ gauge theory with four hypermultiplets, and the $R_j$ functions are
\begin{equation}
R_{j}(\widetilde{\bm{m}}_{i},\tau)=\frac{i}{2}\frac{\theta_{1}(2\widetilde{\bm{m}}_{j})}{\eta(\tau)}\prod_{l\neq j}\frac{\eta(\tau)}{\theta_{1}(\widetilde{\bm{m}}_{j}+\widetilde{\bm{m}}_{l})}\frac{\eta(\tau)}{\theta_{1}(\widetilde{\bm{m}}_{j}-\widetilde{\bm{m}}_{l})},\quad j=1,2,3,4.
\end{equation}
Here $\widetilde{m}_{i}=e^{2\pi i \bm{\widetilde{m}}_{i}}$ are related to the Cartan element $\widetilde{z}_{i}=e^{2\pi i \bm{\widetilde{z}}_{i}}$ of $\mathfrak{d}_{4}$ as follows,
\begin{align}
\widetilde{z}_{1}=\frac{\widetilde{m}_{1}}{\widetilde{m}_{2}}, \qquad \widetilde{z}_{2}=\frac{\widetilde{m}_{2}}{\widetilde{m}_{3}},
\qquad \widetilde{z}_{3}=\widetilde{m}_{3}\widetilde{m}_{4},
\qquad
\widetilde{z}_{4}=\frac{\widetilde{m}_{3}}{\widetilde{m}_{4}}
\end{align}
For our purpose, we give $\text{ch}[\mathcal{L}_{\mathfrak{d}_{4}}(-2\Lambda_{0})]$, $\mathrm{ch}[\sigma^{\bar{\Lambda}_{1}}(\mathcal{L}_{\mathfrak{d}_{4}}(-2\Lambda_{0}))]$, $\mathrm{ch}[\sigma^{\bar{\Lambda}_{3}}(\mathcal{L}_{\mathfrak{d}_{4}}(-2\Lambda_{0}))]$, $\mathrm{ch}[\sigma^{\bar{\Lambda}_{4}}(\mathcal{L}_{\mathfrak{d}_{4}}(-2\Lambda_{0}))]$ explicitly,
\begin{equation}
\begin{aligned}
\text{ch}[\mathcal{L}_{\mathfrak{d}_{4}}(-2\Lambda_{0})](q;,\widetilde{z}_{1},\widetilde{z}_{2},\widetilde{z}_{3},\widetilde{z}_{4})&=\frac{1}{2}\frac{\eta(\tau)^{2}}{\theta_{1}(\tilde{z}_{1}\tilde{z}_{2}^{2}\tilde{z}_{3}\tilde{z}_{4};q)\theta_{1}(\tilde{z}_{1};q)\theta_{1}(\tilde{z}_{4};q)\theta_{1}(\tilde{z}_{3};q)}\\&\times\sum_{\vec{\alpha}=\pm}\left(\prod_{i=1}^{4}\alpha_{i}\right)E_{2}\begin{bmatrix}1\\\left(\widetilde{z}_{1}^{\frac{1}{2}}\widetilde{z}_{2}\widetilde{z}_{3}^{\frac{1}{2}}\widetilde{z}_{4}^{\frac{1}{2}}\right)^{\alpha_{1}}\left(\widetilde{z}_{1}^{\frac{1}{2}}\right)^{\alpha_{2}}\left(\widetilde{z}_{4}^{\frac{1}{2}}\right)^{\alpha_{3}}\left(\widetilde{z}_{3}^{\frac{1}{2}}\right)^{\alpha_{4}}
\end{bmatrix}\\
\mathrm{ch}[\sigma^{\bar{\Lambda}_{1}}(\mathcal{L}_{\mathfrak{d}_{4}}(-2\Lambda_{0}))](q;,\widetilde{z}_{1},\widetilde{z}_{2},\widetilde{z}_{3},\widetilde{z}_{4})&=\frac{1}{2}\left(y\widetilde{z}_{1}\widetilde{z}_{2}\widetilde{z}_{3}^{\frac{1}{2}}\widetilde{z}_{4}^{\frac{1}{2}}q^{\frac{1}{2}}\right)^{-2}\frac{\eta(\tau)^{2}}{\theta_{1}(\widetilde{z}_{1}q\widetilde{z}_{2}^{2}\widetilde{z}_{3}\widetilde{z}_{4};q)\theta_{1}(\widetilde{z}_{1}q;q)\theta_{1}(\widetilde{z}_{4};q)\theta_{1}(\widetilde{z}_{3};q)}\\
&\times\sum_{\vec{\alpha}=\pm}\left(\prod_{i=1}^{4}\alpha_{i}\right)E_{2}\begin{bmatrix}1\\\left(\widetilde{z}_{1}^{\frac{1}{2}}q^{\frac{1}{2}}\widetilde{z}_{2}\widetilde{z}_{3}^{\frac{1}{2}}\widetilde{z}_{4}^{\frac{1}{2}}\right)^{\alpha_{1}}\left(\widetilde{z}_{1}^{\frac{1}{2}}q^{\frac{1}{2}}\right)^{\alpha_{2}}\left(\widetilde{z}_{4}^{\frac{1}{2}}\right)^{\alpha_{3}}\left(\widetilde{z}_{3}^{\frac{1}{2}}\right)^{\alpha_{4}}
\end{bmatrix}\\
\mathrm{ch}[\sigma^{\bar{\Lambda}_{3}}(\mathcal{L}_{\mathfrak{d}_{4}}(-2\Lambda_{0}))](q;,\widetilde{z}_{1},\widetilde{z}_{2},\widetilde{z}_{3},\widetilde{z}_{4})&=\frac{1}{2}\left(y\widetilde{z}_{1}^{\frac{1}{2}}\widetilde{z}_{2}\widetilde{z}_{3}\widetilde{z}_{4}^{\frac{1}{2}}q^{\frac{1}{2}}\right)^{-2}\frac{\eta(\tau)^{2}}{\theta_{1}(\widetilde{z}_{1}\widetilde{z}_{2}^{2}\widetilde{z}_{3}q\widetilde{z}_{4};q)\theta_{1}(\widetilde{z}_{1};q)\theta_{1}(\widetilde{z}_{4};q)\theta_{1}(\widetilde{z}_{3}q;q)}\\
&\times\sum_{\vec{\alpha}=\pm}\left(\prod_{i=1}^{4}\alpha_{i}\right)E_{2}\begin{bmatrix}1\\\left(\widetilde{z}_{1}^{\frac{1}{2}}\widetilde{z}_{2}\widetilde{z}_{3}^{\frac{1}{2}}q^{\frac{1}{2}}\widetilde{z}_{4}^{\frac{1}{2}}\right)^{\alpha_{1}}\left(\widetilde{z}_{1}^{\frac{1}{2}}\right)^{\alpha_{2}}\left(\widetilde{z}_{4}^{\frac{1}{2}}\right)^{\alpha_{3}}\left(\widetilde{z}_{3}^{\frac{1}{2}}q^{\frac{1}{2}}\right)^{\alpha_{4}}
\end{bmatrix}\\
\mathrm{ch}[\sigma^{\bar{\Lambda}_{4}}(\mathcal{L}_{\mathfrak{d}_{4}}(-2\Lambda_{0}))](q;,\widetilde{z}_{1},\widetilde{z}_{2},\widetilde{z}_{3},\widetilde{z}_{4})&=\frac{1}{2}\left(y\widetilde{z}_{1}^{\frac{1}{2}}\widetilde{z}_{2}\widetilde{z}_{3}^{\frac{1}{2}}\widetilde{z}_{4}q^{\frac{1}{2}}\right)^{-2}\frac{\eta(\tau)^{2}}{\theta_{1}(\widetilde{z}_{1}\widetilde{z}_{2}^{2}\widetilde{z}_{3}\widetilde{z}_{4}q;q)\theta_{1}(\widetilde{z}_{1};q)\theta_{1}(\widetilde{z}_{4}q;q)\theta_{1}(\widetilde{z}_{3};q)}\\
&\times\sum_{\vec{\alpha}=\pm}\left(\prod_{i=1}^{4}\alpha_{i}\right)E_{2}\begin{bmatrix}1\\\left(\widetilde{z}_{1}^{\frac{1}{2}}\widetilde{z}_{2}\widetilde{z}_{3}^{\frac{1}{2}}\widetilde{z}_{4}^{\frac{1}{2}}q^{\frac{1}{2}}\right)^{\alpha_{1}}\left(\widetilde{z}_{1}^{\frac{1}{2}}\right)^{\alpha_{2}}\left(\widetilde{z}_{4}^{\frac{1}{2}}q^{\frac{1}{2}}\right)^{\alpha_{3}}\left(\widetilde{z}_{3}^{\frac{1}{2}}\right)^{\alpha_{4}}
\end{bmatrix}
\end{aligned}
\end{equation}
then, one can show that these expressions satisfy the following relations
\begin{equation}
\mathrm{ch}[\sigma^{\bar{\Lambda}_{i}}(\mathcal{L}_{\mathfrak{d}_{4}}(-2\Lambda_{0}))] = \mathrm{ch}\left[\mathcal{L}_{\mathfrak{d}_{4}}(-2\Lambda_i)\right],\quad i=1,3,4.
\end{equation}
It means that we prove the closed form expressions of characters of some simple modules of $\mathcal{L}_{\mathfrak{d}_{4}}(-2\Lambda_{0})$ in terms of $R_{j}$ function.
We can also prove another relation between characters of $\mathcal{L}_{\mathfrak{d}_{4}}(-2\Lambda_{0})$ modules and the partition function of the curved $\beta\gamma$ system \cite{Eager:2019zrc}. The partition function of the curved $\beta\gamma$ system on complex Grassmannian $\text{Gr}(2,4)$ is given by,
\begin{equation}
Z_{\mathfrak{d}_{4}}(t,\bm{m}^{\mathfrak{a}_{3}}_{3},\tau)=\frac{i\eta(\tau)\theta_{1}(2\sigma,\tau)}{\prod_{\omega\in \rho}\theta_{1}(\sigma+(\bm{m}^{\mathfrak{a}_{3}},\omega),\tau)},
\end{equation}
where $t=e^{2\pi i \sigma}$ and $q=e^{2 \pi i \tau}$ as usual. The product in the denominator is over the weights in the  representation $\rho$ of  $\mathfrak{a}_{3}$ with highest weight $\bar{\Lambda}_{\mathfrak{a}_{3}}$. The authors of \cite{Eager:2019zrc} found that this partition function is also given by
\begin{equation}
Z_{\mathfrak{d}_{4}}(t,\bm{m}^{\mathfrak{a}_{3}}_{3},\tau)=\text{ch}[\mathcal{L}_{\mathfrak{d}_{4}}(-2\Lambda_{0})](\bm{m}^{\mathfrak{d}_{4}},\tau)-\text{ch}[\mathcal{L}_{\mathfrak{d}_{4}}(-2\Lambda_{4})](\bm{m}^{\mathfrak{d}_{4}},\tau)
\end{equation}
with the following identifications of parameters
\begin{equation}
\bm{m}^{\mathfrak{d}_{4}}_{i}=\bm{m}^{\mathfrak{a}_{3}}_{i}, \quad \text{for}\,\, i=1,2,3, \quad \bm{m}^{\mathfrak{d}_{4}}_{4}=\sigma-\frac{\bm{m}^{\mathfrak{a}_{3}}_{1}}{2}-\bm{m}^{\mathfrak{a}_{3}}_{2}-\frac{\bm{m}^{\mathfrak{a}_{3}}_{3}}{2}.
\end{equation}
Using the expression of $\text{ch}[\mathcal{L}_{\mathfrak{d}_{4}}(-2\Lambda_0)]$, one also sees that
\begin{equation}
Z_{\mathfrak{d}_{4}}(t,\bm{m}^{\mathfrak{a}_{3}}_{3},\tau)=\text{ch}[\mathcal{L}_{\mathfrak{d}_{4}}(-2\Lambda_{0})](\bm{m}^{\mathfrak{d}_{4}},\tau)-\text{ch}[\sigma^{\bar{\Lambda}_4}(\mathcal{L}_{\mathfrak{d}_{4}}(-2\Lambda_{0}))](\bm{m}^{\mathfrak{d}_{4}},\tau).
\end{equation}
Therefore we have
\begin{equation}
\text{ch}[\mathcal{L}_{\mathfrak{d}_{4}}(-2\Lambda_{4})](\bm{m}^{\mathfrak{d}_{4}},\tau)
=\text{ch}[\sigma^{\bar{\Lambda}_4}(\mathcal{L}_{\mathfrak{d}_{4}}(-2\Lambda_{0}))](\bm{m}^{\mathfrak{d}_{4}},\tau).
\end{equation}
Triality gives similar results for $i=1$ and $2$.

Now, let us consider the character of the spectral flowed module of $\mathcal{L}_{\mathfrak{d}_{4}}(-\Lambda_{2})$ along $-\frac{1}{2}\bar{\Lambda}_2$ direction,
\begin{equation}
\text{ch}[\sigma^{-\frac{1}{2}\bar{\Lambda}_2}(\mathcal{L}_{\mathfrak{d}_{4}}(-\Lambda_{2}))]=\left(y\tilde{z}_{1}^{\frac{1}{2}}\tilde{z}_{2}\tilde{z}_{3}^{\frac{1}{2}}\tilde{z}_{4}^{\frac{1}{2}}q^{\frac{1}{4}}\right)^{-2} \text{ch}[\mathcal{L}_{\mathfrak{d}_{4}}(-\Lambda_{2})]\left(q;\tilde{z}_{1},\tilde{z}_{2}q^{-\frac{1}{2}},\tilde{z}_{3},\tilde{z}_{4}\right).
\end{equation}
$\text{ch}[\sigma^{-\frac{1}{2}\bar{\Lambda}_2} (\mathcal{L}_{\mathfrak{d}_{4}}(-\Lambda_{2}))]$ is the character of an ordinary module as it converges under the limit $\tilde{z}_i\rightarrow 1$. It equals to the defect index $\mathcal{I}^{\text{defet}}_{0,4}(k=1)$ of \cite{Zheng:2022zkm}, which satisfies the following second-order $\Gamma^{0}(2)$ MLDE
\begin{equation}
\left(D_{q}^{(2)}+\frac{1}{144}\bar{\Theta}_{0,2}(\tau)-\frac{37}{288}\bar{\Theta}_{1,1}(\tau)\right) \text{ch}[\sigma^{\frac{1}{2}\bar{\Lambda}_2}(\mathcal{L}_{\mathfrak{d}_{4}}(-\Lambda_{2}))]\big|_{\tilde{z}_{i}\rightarrow 1}=0,
\end{equation}
without $D_{q}^{(1)}$ term.

\appendix
\section{Theta functions}

We first summarize some basic facts about affine Lie of type $A^{(1)}_{1}$.

\begin{table}[ht]
\centering
\begin{tabular}{|l|l|}
    \hline
     Cartan matrix &    $\begin{pmatrix}2 & -2 \\-2 & 2 \end{pmatrix} $                \\
    \hline
     Simple roots  &    $\{\alpha_0, \alpha_1\} $                                       \\
    \hline
$\mathfrak{h}^*$   &  $\text{Span}_{\mathbb{C}}\{\alpha_0, \alpha_1, \Lambda_0 \}$       \\
     \hline
     Bilinear form on  $\mathfrak{h}^*$ &  $\langle \alpha_0, \alpha_0 \rangle=2$       \\
                                        &  $\langle \alpha_1, \alpha_1 \rangle=2$       \\
                                        & $\langle \alpha_1, \alpha_0 \rangle=\langle \al_0, \al_1\rangle=-2$   \\
                                        & $\langle \al_0, \Lambda_0 \rangle=1$               \\
                                        & $\langle \Lambda_0, \Lambda_0 \rangle=\langle \al_1, \Lambda_0 \rangle=0$  \\

     \hline
      basic imaginary root             & $\delta=\al_0+\al_1$         \\
     \hline
     $\mathfrak{h}$ &  $\text{Span}_{\mathbb{C}}\{\alpha_0^{\vee}=h_1, \alpha_2^{\vee}=h_2, d \}$   \\
    \hline
     Central element &         $c=\al_0^{\vee}+\al_1^{\vee}$                         \\
    \hline
    Fundamental weights &    $\{\Lambda_{0},\Lambda_1\}$                              \\
                        &    $\langle \Lambda_{i}, \alpha_j^{\vee}\rangle=\delta_{i,j} $,  $\Lambda_{i}(d)=0$, $i=0,1$ \\
    \hline
    Lattice $M$        &  $ \mathbb{Z} h_1$            \\
    \hline
    Lattice $M^*$      & $\frac12 \mathbb{Z} \al_1$            \\
    \hline
    Integral forms $P$        & $\{\lambda\in \mathfrak{h}^*|\langle \lambda(\al_i^{\vee})\in \mathbb{Z}, i=0,1\}$\\
    \hline
    Positive integral forms $P_{+}$ & $\{\lambda\in \mathfrak{h}^*|\langle \lambda(\al_i^{\vee})\in \mathbb{Z}_{\geq 0}, i=0,1\}$\\
    \hline
    Weyl group         &  $t(M)\rtimes \overline{W}$, $t(M)=\{t_m|m\in M\}$, $\overline{W}=\{s_{1}\}$   \\

    \hline
    Lacing number     & 1   \\
    \hline
    Coxeter dual number & 2 \\
    \hline
\end{tabular}
\end{table}

One can use classical theta functions to define {\em Jacobi theta functions} of degree two:
\begin{align}\label{trfmt7}
\begin{split}
    &\theta_{3}(\mathbf{z};q)\equiv\vartheta_{00}(\tau,z)=\Theta_{2,2}(\tau,z)+\Theta_{0,2}(\tau,z)=\displaystyle\sum_{n\in\mathbb{Z}}\mathbf{z}^{2(n+\frac12)}q^{2(n+\frac12)^2}+\mathbf{z}^{2n}q^{2n^2}=\sum_{n\in\mathbb{Z}}\mathbf{z}^{n}q^{\frac{n^2}{2}},\\
    &\theta_{4}(\mathbf{z};q)\equiv\vartheta_{01}(\tau,z)=-\Theta_{2,2}(\tau,z)+\Theta_{0,2}(\tau,z)=\displaystyle\sum_{n\in\mathbb{Z}}-\mathbf{z}^{2(n+\frac12)}q^{2(n+\frac12)^2}+\mathbf{z}^{2n}q^{2n^2}=\sum_{n\in \mathbb{Z}}(-1)^{n}\mathbf{z}^nq^{\frac{n^2}{2}},\\
    &\theta_{2}(\mathbf{z};q)\equiv\vartheta_{10}(\tau,z)=\Theta_{1,2}(\tau,z)+\Theta_{-1,2}(\tau,z)=\displaystyle\sum_{n\in \mathbb{Z}}\mathbf{z}^{2(n+\frac14)}q^{2(n+\frac14)^2}+\mathbf{z}^{2(n-\frac14)}q^{2(n-\frac14)^2}=\sum_{n\in \mathbb{Z}}\mathbf{z}^{(n+\frac12)}q^{\frac{(n+\frac12)^2}{2}},\\
    &\theta_{1}(\mathbf{z};q)\equiv-\vartheta_{11}(\tau,z)=i\Theta_{1,2}(\tau,z)-i\Theta_{-1,2}(\tau,z)=\displaystyle\sum_{n\in \mathbb{Z}}i\mathbf{z}^{2(n+\frac14)}q^{2(n+\frac14)^2}-i\mathbf{z}^{2(n-\frac14)}q^{2(n-\frac14)^2}\\
    &\quad\quad\quad\quad\quad\quad\quad\quad\quad\quad\quad\quad\quad\quad\quad\quad\quad\quad\quad\quad\quad=\sum_{n\in\mathbb{Z}}e^{\pi i (n+\frac12)}\mathbf{z}^{(n+\frac12)}q^{\frac{(n+\frac12)^2}{2}}.
\end{split}
\end{align}

Using $\eta(q)=q^{\frac{1}{24}}\prod_{n=1}^{\infty}(1-q^n)$, one can gets the following infinite product identities
\begin{align}\label{trfmt9}
\begin{split}
  &\displaystyle\prod_{n=1}^{\infty}(1+\mathbf{z}q^{n-\frac12})(1+\mathbf{z}^{-1}q^{n-\frac12})=q^{\frac{1}{24}}\frac{\vartheta_{00}(\tau,z)}{\eta(q)},\\
  &\displaystyle\prod_{n=1}^{\infty}(1-\mathbf{z}q^{n-\frac12})(1-\mathbf{z}^{-1}q^{n-\frac12})=q^{\frac{1}{24}}\frac{\vartheta_{01}(\tau,z)}{\eta(q)},\\
  &\displaystyle\prod^{\infty}_{n=1}(1+\mathbf{z}q^n)(1+\mathbf{z}^{-1}q^{n-1})=q^{-\frac{1}{12}}\mathbf{z}^{-\frac12}\frac{\vartheta_{10}(\tau,z)}{\eta(q)},\\
  &\displaystyle\prod^{\infty}_{n=1}(1-\mathbf{z}q^n)(1-\mathbf{z}^{-1}q^{n-1})=-iq^{-\frac{1}{12}}\mathbf{z}^{-\frac12}\frac{\vartheta_{11}(\tau,z)}{\eta(q)},
\end{split}
\end{align}
where $q$ is as always $e^{2\pi i \tau}.$

\section{Proof in Proposition \ref{bimod}}
The calculation of (\ref{appen1}):
\begin{align*}
   &\ju T_-^a T_0^b T_+^d P(E_1(n,k)) \\
  & =  T_-^a T_0^b T_+^d \p{\prod_{r=1}^{n}\prod_{s=1}^{k-1}G_{-r-st}}T_+^n\\
  & = T_-^a\p{\prod_{r=1}^{n}\prod_{s=1}^{k-1}G_{-r-st-d}}T_0^b T_+^{d+n}\\
   & =
   T_-^a\p{\prod_{r=1}^{n}\prod_{s=1}^{k-1}(T_-T_+-(-r-st-d)T_0+(-r-st-d)(-r-st-d+1))}T_0^b T_+^{d+n}\\
   & =
   T_-^a\p{\prod_{r=1}^{n}\prod_{s=1}^{k-1}(r+st+d)(T_0+r+st+d-1)}T_0^bT_+^{n+d} \ju \mod T_-U(L_0).
\end{align*}

The calculation of (\ref{appen2}):
\begin{align*}
   & \ju T_-^a T_0^b T_+^d P(E_2(n,k)) \\
   &= T_-^a T_0^b T_+^m \p{\prod_{i=1}^{p-n}G_{-i}}\prod_{r=0}^{p-n-1}\prod_{s=1}^{q-k}G_{r+st-p+n}\\
   &= T_-^a T_0^b T_+^m \prod_{r=0}^{p-n-1}\prod_{s=0}^{q-k}G_{r+st-p+n}\\
   &= T_-^a \prod_{r=0}^{p-n-1}\prod_{s=0}^{q-k}G_{r+st-p+n-m}T_0^b T_+^m\\
   &=T_-^a\prod_{r=1}^{p-n}\prod_{s=0}^{q-k}G_{st-m-r}T_0^b T_+^m\\
   &=T_-^a\prod_{r=1}^{p-n}\prod_{s=0}^{q-k}(st-m-r)(-T_0+st-m-r+1)T_0^b T_+^m  \ju \mod T_-U(L_0).\\
\end{align*}
\bibliographystyle{alpha}

\bibliography{Fusion}

\newcommand{\etalchar}[1]{$^{#1}$}
\begin{thebibliography}{BPRvR15}

\bibitem[AD95]{Argyres:1995jj}
Philip~C. Argyres and Michael~R. Douglas.
\newblock {New phenomena in SU(3) supersymmetric gauge theory}.
\newblock {\em Nucl. Phys. B}, 448:93--126, 1995.

\bibitem[AK18]{Arakawa:2016hkg}
Tomoyuki Arakawa and Kazuya Kawasetsu.
\newblock Quasi-lisse vertex algebras and modular linear differential
  equations.
\newblock {\em Lie Groups, Geometry, and Representation Theory: A Tribute to
  the Life and Work of Bertram Kostant}, pages 41--57, 2018.

\bibitem[AM95]{adamovic1995vertex}
Drazen Adamovic and Antun Milas.
\newblock {Vertex operator algebras associated to modular invariant
  representations for $A^{(1)}_{1}$}.
\newblock {\em Mathematical Research Letters}, 2:563--575, 1995.

\bibitem[AMP21]{adamovic2021certain}
Drazen Adamovic, Antun Milas, and Michael Penn.
\newblock {On certain W-algebras of type $W_{k}(sl(4), f)$}.
\newblock {\em Contemporary Math}, 2021.

\bibitem[Ara16]{Arakawa:2012xrk}
Tomoyuki Arakawa.
\newblock {Rationality of admissible affine vertex algebras in the category
  ${\mathcal{O}}$}.
\newblock {\em Duke Math. J.}, 165(1):67--93, 2016.

\bibitem[Ara18]{Arakawa:2018egx}
Tomoyuki Arakawa.
\newblock {Chiral algebras of class $\mathcal{S}$ and Moore-Tachikawa
  symplectic varieties}.
\newblock 11 2018.

\bibitem[AVEM21]{Arakawa:2021ogm}
Tomoyuki Arakawa, Jethro Van~Ekeren, and Anne Moreau.
\newblock {Singularities of nilpotent Slodowy slices and collapsing levels of
  W-algebras}.
\newblock 2 2021.

\bibitem[AY92]{Awata:1992sm}
Hidetoshi Awata and Yasuhiko Yamada.
\newblock {Fusion rules for the fractional level sl(2) algebra}.
\newblock {\em Mod. Phys. Lett. A}, 7:1185--1196, 1992.

\bibitem[Ber91]{Bershadsky:1990bg}
Michael Bershadsky.
\newblock {Conformal field theories via Hamiltonian reduction}.
\newblock {\em Commun. Math. Phys.}, 139:71--82, 1991.

\bibitem[BLL{\etalchar{+}}15]{Beem:2013sza}
Christopher Beem, Madalena Lemos, Pedro Liendo, Wolfger Peelaers, Leonardo
  Rastelli, and Balt~C. van Rees.
\newblock {Infinite Chiral Symmetry in Four Dimensions}.
\newblock {\em Commun. Math. Phys.}, 336(3):1359--1433, 2015.

\bibitem[BN16a]{Buican:2015tda}
Matthew Buican and Takahiro Nishinaka.
\newblock {Argyres-Douglas Theories, the Macdonald Index, and an RG
  Inequality}.
\newblock {\em JHEP}, 02:159, 2016.

\bibitem[BN16b]{Buican:2015ina}
Matthew Buican and Takahiro Nishinaka.
\newblock {On the superconformal index of Argyres\textendash{}Douglas
  theories}.
\newblock {\em J. Phys. A}, 49(1):015401, 2016.

\bibitem[BPRvR15]{Beem:2014rza}
Christopher Beem, Wolfger Peelaers, Leonardo Rastelli, and Balt~C. van Rees.
\newblock {Chiral algebras of class S}.
\newblock {\em JHEP}, 05:020, 2015.

\bibitem[BR18]{Beem:2017ooy}
Christopher Beem and Leonardo Rastelli.
\newblock {Vertex operator algebras, Higgs branches, and modular differential
  equations}.
\newblock {\em JHEP}, 08:114, 2018.

\bibitem[BS95]{bouwknegt1995w}
Peter Bouwknegt and Kareljan Schoutens.
\newblock {\em W-symmetry}, volume~22.
\newblock World Scientific, 1995.

\bibitem[CGS16]{Cordova:2016uwk}
Clay Cordova, Davide Gaiotto, and Shu-Heng Shao.
\newblock {Infrared Computations of Defect Schur Indices}.
\newblock {\em JHEP}, 11:106, 2016.

\bibitem[CGS17]{Cordova:2017mhb}
Clay Cordova, Davide Gaiotto, and Shu-Heng Shao.
\newblock {Surface Defects and Chiral Algebras}.
\newblock {\em JHEP}, 05:140, 2017.

\bibitem[CR12]{Creutzig:2012sd}
Thomas Creutzig and David Ridout.
\newblock {Modular Data and Verlinde Formulae for Fractional Level WZW Models
  I}.
\newblock {\em Nucl. Phys. B}, 865:83--114, 2012.

\bibitem[CR13a]{Creutzig:2013yca}
Thomas Creutzig and David Ridout.
\newblock {Modular Data and Verlinde Formulae for Fractional Level WZW Models
  II}.
\newblock {\em Nucl. Phys. B}, 875:423--458, 2013.

\bibitem[CR13b]{creutzig2013modular}
Thomas Creutzig and David Ridout.
\newblock {Modular data and Verlinde formulae for fractional level WZW models
  II}.
\newblock {\em Nuclear Physics B}, 875(2):423--458, 2013.

\bibitem[CS16]{Cordova:2015nma}
Clay Cordova and Shu-Heng Shao.
\newblock {Schur Indices, BPS Particles, and Argyres-Douglas Theories}.
\newblock {\em JHEP}, 01:040, 2016.

\bibitem[DLM96]{dong1996simple}
Chongying Dong, Haisheng Li, and Geoffrey Mason.
\newblock Simple currents and extensions of vertex operator algebras.
\newblock {\em Communications in mathematical physics}, 180(3):671--707, 1996.

\bibitem[DLM97]{dong1997vertex}
Chongying Dong, Haisheng Li, and Geoffrey Mason.
\newblock {Vertex operator algebras associated to admissible representations of
  $\widehat{sl(2)}$}.
\newblock {\em Communications in mathematical physics}, 184(1):65--93, 1997.

\bibitem[DLM00]{dong2000modular}
Chongying Dong, Haisheng Li, and Geoffrey Mason.
\newblock {Modular-Invariance of Trace Functions in Orbifold Theory and
  Generalized Moonshine}.
\newblock {\em Communications in Mathematical Physics}, 214(1):1--56, 2000.

\bibitem[ELS20]{Eager:2019zrc}
Richard Eager, Guglielmo Lockhart, and Eric Sharpe.
\newblock {Hidden exceptional symmetry in the pure spinor superstring}.
\newblock {\em Phys. Rev. D}, 101(2):026006, 2020.

\bibitem[Fas22]{Fasquel:2020iqf}
Justine Fasquel.
\newblock {Rationality of the Exceptional $\mathcal {W}$-Algebras $\mathcal
  {W}_k(\mathfrak {sp}_{4},f_{subreg})$ Associated with Subregular Nilpotent
  Elements of $\mathfrak {sp}_{4}$}.
\newblock {\em Commun. Math. Phys.}, 390(1):33--65, 2022.

\bibitem[FF90a]{Feigin:1990pn}
B.~Feigin and E.~Frenkel.
\newblock {Quantization of the Drinfeld-Sokolov reduction}.
\newblock {\em Phys. Lett. B}, 246:75--81, 1990.

\bibitem[FF90b]{feigin1990representations}
Boris~L Feigin and Edward~V Frenkel.
\newblock {Representations of affine Kac-Moody algebras, bosonization and
  resolutions}.
\newblock {\em letters in mathematical physics}, 19(4):307--317, 1990.

\bibitem[FHL93]{frenkel1993axiomatic}
Igor Frenkel, Yi-Zhi Huang, and James Lepowsky.
\newblock {\em {On axiomatic approaches to vertex operator algebras and
  modules}}, volume 494.
\newblock American Mathematical Soc., 1993.

\bibitem[FKR21]{Fehily:2020bif}
Zachary Fehily, Kazuya Kawasetsu, and David Ridout.
\newblock {Classifying Relaxed Highest-Weight Modules for Admissible-Level
  Bershadsky\textendash{}Polyakov Algebras}.
\newblock {\em Commun. Math. Phys.}, 385(2):859--904, 2021.

\bibitem[FS18]{Fluder:2017oxm}
Martin Fluder and Jaewon Song.
\newblock {Four-dimensional Lens Space Index from Two-dimensional Chiral
  Algebra}.
\newblock {\em JHEP}, 07:073, 2018.

\bibitem[Fuc89]{fuchs1989two}
Dmitrii~Borisovich Fuchs.
\newblock {Two projections of singular vectors of Verma modules over the affine
  Lie algebra $A_1^{(1)}$}.
\newblock {\em Funktsional'nyi Analiz i ego Prilozheniya}, 23(2):81--83, 1989.

\bibitem[FZ96]{FrenkelZhu}
I.~B. Frenkel and Y.~Zhu.
\newblock {Vertex operator algebras associated to representations of affine and
  Virasoro algebras}.
\newblock {\em Duke Mathematical Journal}, 66(1):123--168, 1996.

\bibitem[Gab01]{Gaberdiel:2001ny}
Matthias~R Gaberdiel.
\newblock {Fusion rules and logarithmic representations of a WZW model at
  fractional level}.
\newblock {\em Nucl. Phys. B}, 618:407--436, 2001.

\bibitem[Gai12]{Gaiotto:2009we}
Davide Gaiotto.
\newblock {$\mathcal{N}$=2 dualities}.
\newblock {\em JHEP}, 08:034, 2012.

\bibitem[GMN13]{Gaiotto:2009hg}
Davide Gaiotto, Gregory~W. Moore, and Andrew Neitzke.
\newblock {Wall-crossing, Hitchin systems, and the WKB approximation}.
\newblock {\em Adv. Math.}, 234:239--403, 2013.

\bibitem[KNS13]{Kaneko:2013uga}
Masanobu Kaneko, Kiyokazu Nagatomo, and Yuichi Sakai.
\newblock {Modular forms and second order ordinary differential equations:
  Applications to vertex operator algebras}.
\newblock {\em Lett. Math. Phys.}, 103:439--453, 2013.

\bibitem[KRW03]{kac2003quantum}
Victor~G Kac, Shi-Shyr Roan, and Minoru Wakimoto.
\newblock {Quantum reduction for affine superalgebras}.
\newblock {\em arXiv preprint math-ph/0302015}, 2003.

\bibitem[KRW22]{Kawasetsu:2021qls}
Kazuya Kawasetsu, David Ridout, and Simon Wood.
\newblock {Admissible-level $\mathfrak{sl}_3$ minimal models}.
\newblock {\em Letters in Mathematical Physics}, 112(5):96, 2022.

\bibitem[KW88]{kac1988modular}
Victor~G Kac and Minoru Wakimoto.
\newblock {Modular invariant representations of infinite-dimensional Lie
  algebras and superalgebras}.
\newblock {\em Proceedings of the National Academy of Sciences},
  85(14):4956--4960, 1988.

\bibitem[KW17]{kac2017remark}
Victor~G Kac and Minoru Wakimoto.
\newblock {A remark on boundary level admissible representations}.
\newblock {\em Comptes Rendus Mathematique}, 355(2):128--132, 2017.

\bibitem[Li96]{li1996local}
Hai-Sheng Li.
\newblock {local systems of twisted vertex operators}.
\newblock In {\em Moonshine, the Monster, and Related Topics: Joint Research
  Conference on Moonshine, the Monster, and Related Topics, June 18-23, 1994,
  Mount Holyoke College, South Hadley, Massachusetts}, volume 193, page 203.
  American Mathematical Soc., 1996.

\bibitem[Li23]{Hao2022}
Hao Li.
\newblock {Quasi-lisse vertex superalgebras}.
\newblock 2023.

\bibitem[LMRS02]{Lesage:2002ch}
F.~Lesage, P.~Mathieu, Jorgen Rasmussen, and H.~Saleur.
\newblock {The $\widehat{su(2)}_{-\frac{1}{2}}$ WZW model and the beta gamma
  system}.
\newblock {\em Nucl. Phys. B}, 647:363--403, 2002.

\bibitem[LP15]{Lemos:2014lua}
Madalena Lemos and Wolfger Peelaers.
\newblock {Chiral Algebras for Trinion Theories}.
\newblock {\em JHEP}, 02:113, 2015.

\bibitem[LXY22]{Li:2022njl}
Bohan Li, Dan Xie, and Wenbin Yan.
\newblock {On low rank 4d $\mathcal{N}=2$ SCFTs}.
\newblock {\em arXiv preprint arXiv:2212.03089}, 2022.

\bibitem[MFF86]{malikov1986singular}
Fedor~Georgievich Malikov, Boris~Lvovich Feigin, and Dmitry~B Fuks.
\newblock {Singular vectors in Verma modules over Kac—Moody algebras}.
\newblock {\em Functional Analysis and its Applications}, 20(2):103--113, 1986.

\bibitem[Miy00]{miyamoto2000modular}
Masahiko Miyamoto.
\newblock {A modular invariance on the theta functions defined on vertex
  operator algebras}.
\newblock {\em Duke Mathematical Journal}, 101(2):221--236, 2000.

\bibitem[Per08]{pervse2008vertex}
Ozren Per{\v{s}}e.
\newblock {Vertex operator algebras associated to certain admissible modules
  for affine Lie algebras of type A}.
\newblock {\em Glasnik matemati{\v{c}}ki}, 43(1):41--57, 2008.

\bibitem[Per13]{pervse2013note}
Ozren Per{\v{s}}e.
\newblock {A note on representations of some affine vertex algebras of type D}.
\newblock {\em Glasnik matemati{\v{c}}ki}, 48(1):81--90, 2013.

\bibitem[PP22]{Pan:2021mrw}
Yiwen Pan and Wolfger Peelaers.
\newblock {Exact Schur index in closed form}.
\newblock {\em Phys. Rev. D}, 106(4):045017, 2022.

\bibitem[Raz12]{Razamat:2012uv}
Shlomo~S. Razamat.
\newblock {On a modular property of N=2 superconformal theories in four
  dimensions}.
\newblock {\em JHEP}, 10:191, 2012.

\bibitem[Rid09]{Ridout:2008nh}
David Ridout.
\newblock {$\widehat{sl(2)}_{-\frac{1}{2}}$: A Case Study}.
\newblock {\em Nucl. Phys. B}, 814:485--521, 2009.

\bibitem[Rid10]{Ridout:2010qx}
David Ridout.
\newblock {$\widehat{sl(2)}_{-\frac{1}{2}}$ and the Triplet Model}.
\newblock {\em Nucl. Phys. B}, 835:314--342, 2010.

\bibitem[RY13]{Razamat:2013jxa}
Shlomo~S. Razamat and Masahito Yamazaki.
\newblock {S-duality and the N=2 Lens Space Index}.
\newblock {\em JHEP}, 10:048, 2013.

\bibitem[SS87]{Schwimmer:1986mf}
A.~Schwimmer and N.~Seiberg.
\newblock {Comments on the N=2, N=3, N=4 Superconformal Algebras in
  Two-Dimensions}.
\newblock {\em Phys. Lett. B}, 184:191--196, 1987.

\bibitem[SXY17]{Song:2017oew}
Jaewon Song, Dan Xie, and Wenbin Yan.
\newblock {Vertex operator algebras of Argyres-Douglas theories from
  M5-branes}.
\newblock {\em JHEP}, 12:123, 2017.

\bibitem[Wak01]{wakimoto2001infinite}
Minoru Wakimoto.
\newblock {\em {Infinite-dimensional Lie algebras}}, volume 195.
\newblock American Mathematical Soc., 2001.

\bibitem[Xie13]{Xie:2012hs}
Dan Xie.
\newblock {General Argyres-Douglas Theory}.
\newblock {\em JHEP}, 01:100, 2013.

\bibitem[Xu95]{xu1995intertwining}
XP~Xu.
\newblock {Intertwining operators for twisted modules of a colored vertex
  operator superalgebra}.
\newblock {\em Journal of Algebra}, 175(1):241--273, 1995.

\bibitem[XY21a]{Xie:2019vzr}
Dan Xie and Wenbin Yan.
\newblock {4d $\mathcal{N}=2$ SCFTs and lisse W-algebras}.
\newblock {\em JHEP}, 04:271, 2021.

\bibitem[XY21b]{Xie:2019zlb}
Dan Xie and Wenbin Yan.
\newblock {Schur sector of Argyres-Douglas theory and $W$-algebra}.
\newblock {\em SciPost Phys.}, 10(3):080, 2021.

\bibitem[XY21c]{Xie:2019yds}
Dan Xie and Wenbin Yan.
\newblock {W algebras, cosets and VOAs for 4d $ \mathcal{N} $ = 2 SCFTs from M5
  branes}.
\newblock {\em JHEP}, 04:076, 2021.

\bibitem[XYY21]{Xie:2016evu}
Dan Xie, Wenbin Yan, and Shing-Tung Yau.
\newblock {Chiral algebra of the Argyres-Douglas theory from M5 branes}.
\newblock {\em Phys. Rev. D}, 103(6):065003, 2021.

\bibitem[Yan17]{yang2017twisted}
Jinwei Yang.
\newblock Twisted representations of vertex operator algebras associated to
  affine lie algebras.
\newblock {\em Journal of Algebra}, 484:88--108, 2017.

\bibitem[Zhu22]{zhu2022bimodues}
Yiyi Zhu.
\newblock {Bimodues associated to twisted modules of vertex operator algebras}.
\newblock {\em arXiv preprint arXiv:2204.00238}, 2022.

\bibitem[ZPW22]{Zheng:2022zkm}
Haocong Zheng, Yiwen Pan, and Yufan Wang.
\newblock {Surface defects, flavored modular differential equations, and
  modularity}.
\newblock {\em Phys. Rev. D}, 106(10):105020, 2022.

\end{thebibliography}

\end{sloppypar}

\end{document}